\DeclareMathOperator{\Img}{Im}
\DeclareMathOperator{\Ker}{Ker}
\DeclareMathOperator{\res}{res}
\DeclareMathOperator{\Zen}{Z}
\DeclareFontFamily{U}{wncy}{}
\DeclareFontShape{U}{wncy}{m}{n}{<->wncyr10}{}
\DeclareSymbolFont{mcy}{U}{wncy}{m}{n}
\DeclareMathSymbol{\Sha}{\mathord}{mcy}{"58}
\DeclareMathSymbol{\sha}{\mathord}{mcy}{"78}
\begin{document}

\newtheorem{thm}{Theorem}[section]
\newtheorem{cor}[thm]{Corollary}
\newtheorem{lem}[thm]{Lemma}
\newtheorem{fact}[thm]{Fact}
\newtheorem{prop}[thm]{Proposition}
\newtheorem{defin}[thm]{Definition}
\newtheorem{exam}[thm]{Example}
\newtheorem{examples}[thm]{Examples}
\newtheorem{rem}[thm]{Remark}
\newtheorem{case}{Case}
\newtheorem{claim}{Claim}
\newtheorem{question}[thm]{Question}
\newtheorem{conj}[thm]{Conjecture}
\newtheorem*{notation}{Notation}
\swapnumbers
\newtheorem{rems}[thm]{Remarks}
\newtheorem*{acknowledgment}{Acknowledgments}
\newtheorem*{thmno}{Theorem}

\newtheorem{questions}[thm]{Questions}
\numberwithin{equation}{section}

\newcommand{\gr}{\mathrm{gr}}
\newcommand{\inv}{^{-1}}
\newcommand{\isom}{\cong}
\newcommand{\dbC}{\mathbb{C}}
\newcommand{\F}{\mathbb{F}}
\newcommand{\dbN}{\mathbb{N}}
\newcommand{\Q}{\mathbb{Q}}
\newcommand{\dbR}{\mathbb{R}}
\newcommand{\dbU}{\mathbb{U}}
\newcommand{\Z}{\mathbb{Z}}
\newcommand{\calG}{\mathcal{G}}
\newcommand{\K}{\mathbb{K}}
\newcommand{\rmH}{\mathrm{H}}
\newcommand{\bfH}{\mathbf{H}}
\newcommand{\bfA}{\mathbf{A}}
\newcommand{\rmr}{\mathrm{r}}
\newcommand{\Span}{\mathrm{Span}}
\newcommand{\eue}{\mathbf{e}}
\newcommand{\bfLam}{\mathbf{\Lambda}}
\newcommand{\calX}{\mathcal{X}}
\newcommand{\calY}{\mathcal{Y}}
\newcommand{\calV}{\mathcal{V}}
\newcommand{\calE}{\mathcal{E}}
\newcommand{\calW}{\mathcal{W}}


\newcommand{\hac}{\hat c}
\newcommand{\hatheta}{\hat\theta}

\title[Digraphs, pro-$p$ groups and Massey products]{Digraphs, pro-$p$ groups \\ and Massey products in Galois cohomology}

\author{Claudio Quadrelli}
\address{Department of Science \& High-Tech, University of Insubria, Como, Italy EU}
\email{claudio.quadrelli@uninsubria.it}
\date{\today}

\begin{abstract}
Let $p$ be a prime. We characterize the oriented right-angled Artin pro-$p$ groups whose
$\F_p$-cohomology algebra yields no essential $n$-fold Massey products for every $n>2$, in terms of the associated digraph.
Moreover, we show that the $\F_p$-cohomology algebra of such an oriented right-angled Artin pro-$p$ group is isomorphic to the exterior Stanley-Reisner $\F_p$-algebra associated to the same digraph.
\end{abstract}

\subjclass[2010]{Primary 12G05; Secondary 20E18, 20J06, 20F36, 12F10}

\keywords{Oriented right-angled Artin pro-$p$ groups, digraphs, Galois cohomology, Massey products, quadratic algebras}

\maketitle

\section{Introduction}
\label{sec:intro}

\subsection{Framework}\label{ssec:frame}
Let $X$ be a complex and $R$ a commutative ring.
The $R$-cohomology groups $\rmH^i(X,R)$, $i\geq0$, are equipped with the cup-product
\[
 \smallsmile \colon \rmH^s(X,R)\otimes\rmH^t(X,R)\longrightarrow\rmH^{s+t}(X,R)
\]
induced by the product of $R$, which turn the space $\coprod_i\rmH^i(X,R)$ into a ring.
Massey products are multi-valued higher order cohomology operations of several variables which generalize the cup-product.
More in detail, if $\alpha_1,\ldots,\alpha_n$ is a sequence of length $n$ of (non-necessarily distinct) elements of $\rmH^1(X,R)$, the ``value'' of the $n$-fold Massey product associated to the above sequence is a subset
\[
 \langle\alpha_1,\ldots,\alpha_n\rangle\subseteq\rmH^2(X,R),
\]
which may be empty.
If $n=2$, then the 2-fold Massey product $\langle\alpha_1,\alpha_2\rangle$ is the subset of $\rmH^2(X,R)$ containing only $\alpha_1\smallsmile\alpha_2$.
If an $n$-fold Massey product $\langle\alpha_1,\ldots,\alpha_n\rangle$ is not empty and it does not contain 0, then it is said to be essential.
(For an overview on Massey products, accessible to non-experts in cohomology, see, e.g., \cite{wang:Massey}.)

The presence of essential Massey products in cohomology reveals information which cannot be revealed by the ring structure structure.
One of the most famous examples of this phenomenon is given by the { Borromean rings}.
\begin{center}
 \includegraphics[scale=0.1]{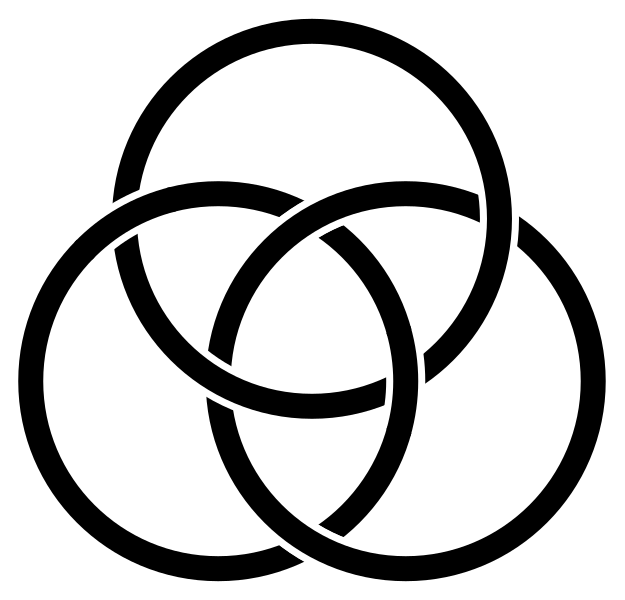}
\end{center}
If one considers singular cohomology of the complement of the Borromean rings, the ring structure tells just that the rings are pairwise disjoint, while the existence of essential 3-fold Massey products explains that the Borromean rings are not equivalent to three unconnected circles\footnote{Even if the Borromean rings showed up in several contexts in past cultures, they take the name from the Italian family { Borromeo}, as the rings appear on the coat of arms of the family. During the Reinassance, the Borromeo were the bankers of the dukes of Milan --- still, the most important memeber of the family was St.~Charles ---, the families Visconti and Sforza: the three rings are told to represent the fortunes of the three families, which are tightly linked to each other.} (see, e.g., \cite[\S~2.2]{wang:Massey}).


One of the topics which gained great interest in Galois theory, in recent years, is the
study of Massey products in Galois cohomology (for a more detailed overview on Massey products in Galois cohomology we direct the reader to \cite{ido:Massey} and \cite{mt:Massey}).
Given a prime number $p$, let $\K$ be a field containing a root of 1 of order $p$.
Also, let $G_{\K}(p)$ denote the maximal pro-$p$ Galois group of $\K$, namely, $G_{\K}(p)$ is the Galois group of the compositum of all Galois $p$-extensions of $\K$ --- equivalently, $G_{\K}(p)$ is the maximal pro-$p$ quotient of the absolute Galois group of $\K$.
Consider the field with $p$-elements $\F_p$ as a trivial $G_{\K}(p)$-module, and the induced $\F_p$-cohomology algebra
$$\bfH^\bullet(G_{\K}(p),\F_p)=\coprod_{i\geq0}\rmH^i(G_{\K}(p),\F_p).$$
The recent hectic research on Massey products in Galois cohomology started after the work \cite{hopwick} of M.J.~Hopkins and K.G.~Wickelgren, where they proved that if $\K$ is a global field of characteristic
not 2, then in $\bfH^\bullet(G_{\K}(2),\F_2)$ there are no essential 3-fold Massey products,
and moreover they conjectured that this is true for any field of characteristic not 2.
Shortly after, in \cite{MT:kernel} J.~Mina\v{c} and N.D.~T\^an conjectured that for every prime $p$ and for any field $\K$ containing a root of 1 of order $p$, in $\bfH^\bullet(G_{\K}(p),\F_p)$ there are no essential $n$-fold Massey products for every $n>2$.
The main results obtained in this direction are the following:
\begin{itemize}
 \item[(a)] E.~Matzri proved that Mina\v c-T\^an's conjecture holds true for $n=3$ (see the preprint \cite{eli:Massey}, see also the published works \cite{EM:Massey,MT:Masseyall});
\item[(b)] J.~Mina\v c and N.D.~T\^an proved their own conjecture for local fields (see \cite{mt:Massey});
\item[(c)] Y.~Harpaz and O.~Wittenberg proved Mina\v c-T\^an's conjecture for number fields (see \cite{HW:Massey});
\item[(d)] the author proved a strengthened version of Mina\v c-T\^an's conjecture for fields satisfying I.~Efrat's Elementary Type Conjecture (see \cite{cq:massey,cq:pyt});
\item[(e)] A.~Merkurjev and F.~Scavia proved that Mina\v c-T\^an's conjecture holds true also for $n=4$ (see \cite{MerSca1}).
\end{itemize}
The absence of essential 3-fold Massey products in the $\F_p$-cohomology of maximal pro-$p$ Galois groups provided new obstructions for the realization of pro-$p$ groups as absolute Galois groups of fields (see \cite[\S~7]{mt:Massey}), which was a very remarkable achievement.

Moreover, there are number-theoretic analogues of the Borromean rings, giving essential Massey products in Galois cohomology (see, e.g., \cite{jochen,morishita,vogel}).
Further interesting results on Massey products in Galois cohomology have been obtained
by various authors (see, e.g., \cite{GPM,PJ,LLSWW,eli2,eli3,MerSca3,wick}).

It is therefore of major interest, in current research in Galois theory, to study Massey products in the $\F_p$-cohomology of pro-$p$ groups.

\subsection{Digraphs and oriented pro-$p$ RAAGs}
Within this frame, we focus on Massey products in the $\F_p$-cohomology of {oriented right-angled Artin pro-$p$ groups} associated to {digraphs}.
By an {digraph} $\Gamma$ we mean a pair of finite sets $\Gamma=(\calV,\calE)$ --- we tacitly assume that $\calV\cap\calE=\varnothing$ --- where $\calV$ is said to be the set of vertices of $\Gamma$, and $\calE$ is the set of directed edges of $\Gamma$, which consists of ordered couples of distinct vertices ---
i.e., $$\calE\subseteq \calV\times\calV\smallsetminus\{(v,v)\:\mid\:v\in\calV\}.$$
In other words, all digraphs in this paper have no loops nor parallel directed edges --- see, e.g., \cite[\S~1.10]{graph:book}.
As an example, the diagram
\begin{equation}\label{eq:exagraph intro}
 \xymatrix@R=1.5pt{&v_1& \\&\bullet& \\  \\  \\
 \bullet\ar[uuur] &\circ\ar[l]\ar@/_/[r]\ar[uuu]& \circ\ar@/_/[uuul]\ar@/_/[l] \\ v_2&v_3&v_4}
 \end{equation}
is the geometric representation of a digraph $\Gamma=(\calV,\calE)$ with four vertices: the black ones are those vertices which are terminal vertices of directed edges whose inverse does not belong to $\calE$.

A digraph with only undirected edges may be considered as an ``undirected'' graph, where the couples of edges $(v,w),(w,v)$ are identified with the subset $\{v,w\}\subseteq\calV$ (see Remark~\ref{rem:non or graph} below).

Now for a prime $p$ put $q=p^f$ for some $f\in\dbN\smallsetminus \{0\}$ --- we require also $f\geq2$ in the case $p=2$.
The {oriented right-angled Artin pro-$p$ group} (oriented pro-$p$ RAAG for short) associated to a digraph $\Gamma=(\calV,\calE)$ and to $q$ is the pro-$p$ group with pro-$p$ presentation
\[
 G=\left\langle\:v\in\calV\:\mid\: wuw^{-1}=\begin{cases}
                                             u^{1+q}& \text{if }(w,u)\text{is special}, \\
                                             u& \text{if }(w,u)\text{is ordinary},
        \end{cases}\;\forall\; (u,w)\in\calE \:\right\rangle.
\]
Observe that if a digraph $\Gamma$ has only undirected edges, then $G$ is the pro-$p$ RAAG (namely, the pro-$p$ completion of the discrete RAAG) associated to $\Gamma$ seen a graph (and it does not depend on $q$).

Oriented pro-$p$ RAAGs associated to digraphs have been studied in \cite{BQW}, where a digraph is called an ``oriented graph'' (though, not every digraph is an oriented graph as defined in \cite[\S~1.10]{graph:book}).
The family of oriented pro-$p$ RAAGs associated to digraphs is extremely rich, and it includes free pro-$p$ groups, free abelian pro-$p$ groups, certain families of $p$-adic analytic pro-$p$ groups and even some finite $p$-groups (see, e.g., \cite[\S~1]{BQW}).
For this reason, pro-$p$ groups associated to digraphs have been object of study in recent times, especially from a Galois-theoretic perspective (see, e.g., \cite{BCQ,BQW,CasQua,QSV,SZ}).

\subsection{Main results}
Our main goal is to characterize oriented pro-$p$ RAAGs whose $\F_p$-cohomology algebra yields no essential Massey products in terms of the associated digraph.

\begin{thm}\label{thm:main intro}
Let $\Gamma=(\calV,\calE)$ be a digraph, and let $G$ be the oriented pro-$p$ RAAG associated to $\Gamma$ and to a $p$-power $p^f$ {\rm (}with $f\geq2$ in case $p=2${\rm )}.
Then the following are equivalent:
\begin{itemize}
 \item[(i)] for every $n>2$ there are no essential $n$-fold Massey products in $\bfH^\bullet(G,\F_p)$;
 \item[(ii)] $G$ satisfies the strong $n$-Massey vanishing property, with respect to $\F_p$ --- namely, if $\alpha_1,\ldots,\alpha_n$ is a sequence of elements of $\rmH^1(G,\F_p)$ satisfying
\[ \alpha_1\smallsmile\alpha_2=\alpha_2\smallsmile\alpha_3=\ldots=\alpha_{n-1}\smallsmile\alpha_n=0,\]
then the associated $n$-fold Massey product contains $0$ --- for every $n>2$;
 \item[(iii)] the digraph $\Gamma$ is special-clique.
\end{itemize}
\end{thm}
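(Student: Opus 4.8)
The plan is to prove the equivalence of (i), (ii), (iii) by establishing the implications $(iii)\Rightarrow(i)\Rightarrow(ii)\Rightarrow(iii)$, since $(i)\Rightarrow(ii)$ is immediate: if $\alpha_1,\dots,\alpha_n$ have all consecutive cup products vanishing, then the Massey product $\langle\alpha_1,\dots,\alpha_n\rangle$ is defined (nonempty), and if it contains no essential value it must contain $0$. So the real content is the two implications $(iii)\Rightarrow(i)$ and $(ii)\Rightarrow(iii)$.

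For $(iii)\Rightarrow(i)$, I would first recall (from the earlier sections, which presumably compute $\bfH^\bullet(G,\F_p)$) that for an oriented pro-$p$ RAAG $G$ associated to a digraph $\Gamma$, the algebra $\bfH^\bullet(G,\F_p)$ is a quadratic $\F_p$-algebra with a presentation read off from $\Gamma$: one generator in degree $1$ per vertex, and the degree-$2$ relations encode which pairs of vertices are joined by an edge and with which orientation. The key structural input is that when $\Gamma$ is \emph{special-clique}, this algebra should coincide with (or at least be "Koszul-like" / have a Gröbner basis structure similar to) the exterior Stanley–Reisner algebra of $\Gamma$ — this is in fact the second main result advertised in the abstract. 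I would then invoke the standard machinery relating Massey products to the bar resolution / to a minimal model: in a quadratic algebra arising this way, one can build an explicit defining system for any Massey product $\langle\alpha_1,\dots,\alpha_n\rangle$ by choosing the $\alpha_{ij}$ inductively, and the special-clique condition on $\Gamma$ is precisely what guarantees that whenever the necessary cup products vanish one can continue the defining system all the way with the final value equal to $0$. Concretely, I expect the argument to reduce to the case where all $\alpha_i$ are among the vertex-generators (by $\F_p$-linearity and the combinatorial nature of the relations), and then to a purely combinatorial statement about directed paths and cliques in $\Gamma$.

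For $(ii)\Rightarrow(iii)$, I would argue by contrapositive: assuming $\Gamma$ is \emph{not} special-clique, produce an explicit sequence $\alpha_1,\dots,\alpha_n$ of degree-$1$ classes with consecutive cup products zero whose Massey product does not contain $0$, i.e., an essential Massey product — thereby also giving the contrapositive of $(i)\Rightarrow(ii)$'s converse direction and closing the loop. The negation of special-clique should manifest as a specific forbidden sub-configuration in the digraph (a short directed path, or a directed cycle, or a non-clique triple with a prescribed orientation pattern); for the minimal such configuration one writes down the corresponding vertex-generators $\alpha_1,\dots,\alpha_n$ and checks, using the explicit cup-product formula, that $\alpha_i\smallsmile\alpha_{i+1}=0$ for all $i$ while every defining system is forced to produce the same nonzero value in $\rmH^2(G,\F_p)$. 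This is essentially the "Borromean rings inside $\Gamma$" phenomenon, and I would model the computation on the known essential $3$-fold Massey product in the mod-$p$ cohomology of the free-by-$\Z$ type examples (e.g.\ the "oriented" analogue, where $wuw^{-1}=u^{1+q}$ introduces the arithmetic needed to make the product nonzero — this is where the hypothesis $f\geq 2$ when $p=2$ will be used).

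The main obstacle, I expect, is the implication $(iii)\Rightarrow(i)$: showing that \emph{all} higher Massey products vanish (contain $0$) uniformly in $n$, not just for small $n$. The difficulty is two-fold: first, one must handle arbitrary sequences $\alpha_1,\dots,\alpha_n$ of (possibly repeated, possibly non-generator) degree-$1$ classes, which requires either a clean reduction to generators or a "strong Massey vanishing for quadratic algebras" lemma tailored to the Stanley–Reisner setting; second, one must track the combinatorics of defining systems through the special-clique condition — intuitively, special-clique forces the algebra to be "decomposable" enough (a suitable direct sum / fibre-product / quadratic-dual decomposition over the cliques of $\Gamma$) that Massey vanishing for the pieces (free pro-$p$ groups, abelian pro-$p$ groups, and the small "special" building blocks) propagates to the whole. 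Making that decomposition precise, and verifying that Massey vanishing is inherited under the relevant amalgamation operations on pro-$p$ groups, is where the real work lies; everything else is bookkeeping with the explicit cup-product and the combinatorial definition of "special-clique."
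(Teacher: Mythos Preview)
Your cycle $(iii)\Rightarrow(i)\Rightarrow(ii)\Rightarrow(iii)$ contains a genuine error at the step you call ``immediate'': $(i)\Rightarrow(ii)$ is \emph{not} trivial, and your justification for it is wrong. You write that if $\alpha_1\smallsmile\alpha_2=\cdots=\alpha_{n-1}\smallsmile\alpha_n=0$ then $\langle\alpha_1,\dots,\alpha_n\rangle$ is defined. This is false for $n\geq4$: vanishing of consecutive cup products is only a \emph{necessary} condition for the Massey product to be defined (cf.\ Proposition~\ref{prop:massey}(i)), not a sufficient one. For $n=3$ the two coincide, but already for $n=4$ there exist sequences with $\alpha_i\smallsmile\alpha_{i+1}=0$ for which no defining system exists, so $\langle\alpha_1,\dots,\alpha_4\rangle=\varnothing$ and (i) says nothing. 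The trivial implication goes the other way, $(ii)\Rightarrow(i)$: if the strong property holds and $\langle\alpha_1,\dots,\alpha_n\rangle$ is defined, then in particular the consecutive cup products vanish, hence the Massey product contains $0$. Accordingly the paper runs the cycle as $(i)\Rightarrow(iii)\Rightarrow(ii)\Rightarrow(i)$.

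Beyond this logical misstep, your sketch of the substantive direction $(iii)\Rightarrow\{(i),(ii)\}$ diverges from the paper's method. You propose to deduce vanishing of all higher Massey products from the quadratic/Stanley--Reisner structure of $\bfH^\bullet(G,\F_p)$ via bar-resolution or Koszul-type arguments. The paper does not do this; it works entirely on the group side via Dwyer's translation (Proposition~\ref{prop:unipotent representation}): given $\alpha_1,\dots,\alpha_n$ with consecutive cup products zero, one \emph{constructs by hand} a homomorphism $\rho\colon G\to\dbU_{n+1}$ with $\rho_{i,i+1}=\alpha_i$, by assigning to each vertex an explicit unitriangular matrix and checking that the defining relations of $G$ are respected (Lemma~\ref{lem:comm qpower} and Proposition~\ref{thm:massey specialclique}). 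This yields the strong property $(ii)$ directly. Your algebraic route might work, but quadraticity alone does not imply Massey vanishing, and the ``decomposition over cliques'' you allude to would need to be made precise; the paper's matrix construction sidesteps this entirely. Your contrapositive strategy for producing essential Massey products when $\Gamma$ is not special-clique is, on the other hand, exactly what the paper does (Propositions~\ref{prop:nomassey nospecialstar1}--\ref{prop:nomassey nospecialstar2}), again via explicit unitriangular representations rather than cochain computations.
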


Here a digraph $\Gamma=(\calV,\calE)$ is said to be {special-clique} if it satisfies the following two conditions:
\begin{itemize}
 \item[(a)] every vertex which is the terminal vertex of a directed edge (i.e., which is black) is a sinkhole;
 \item[(b)] the initial vertices of directed edges with the same terminal vertex are all joined to each other.
\end{itemize}
If $\Gamma$ satisfies (at least) condition (a), then it is said to be a {special digraph}.
(For the detailed definitions see Definition~\ref{defin:special graph} below).
For example, the digraph represented in \eqref{eq:exagraph intro} is not special-clique, nor special: both conditions (a)--(b) are not satisfied, as the vertex $v_1$ is a sinkhole, but $v_2$ is not --- as for condition~(a) ---, and the vertices $v_2,v_4$ are origins of directed edges pointing both at the sinkhole $v_1$ but are not joined to each other --- as for condition~(b).

We prove Theorem~\ref{thm:main intro} using the ``pro-$p$ translation'' of a result of W.~Dwyer (see \cite{dwyer}) which interprets the existence of Massey products in the $\F_p$-cohomology of a group $G$ in terms of the existence of certain representations from $G$ to the group of upper unitriangular matrices with entries in $\F_p$ (see Proposition~\ref{prop:unipotent representation} below).

Also, we exploit the fact that the $\F_p$-cohomology groups of degree 1 and 2 of an oriented pro-$p$ RAAG are completely described in terms of the incidence structure of the associated digraph: namely, if $G$ is the oriented pro-$p$ RAAG associated to a digraph $\Gamma=(\calV,\calE)$ and to a $p$-power $q$, then
$\rmH^1(G,\F_p)\simeq\Lambda_1(\Gamma^\ast)$ and $\rmH^2(G,\F_p)\simeq\Lambda_2(\Gamma^\ast)$,
where $\Lambda_i(\Gamma^\ast)$ is the subspace of degree $i$ of the {exterior Stanley-Reisner $\F_p$-algebra} $\bfLam_\bullet(\Gamma^\ast)$ associated to $\Gamma$.

It is worth underlining that Theorem~\ref{thm:main intro} presents a phenomenon which is analogous to the example of the Borromean rings: Massey products in the $\F_p$-cohomology of oriented pro-$p$ RAAGs detect a combinatorial property of the underlying graphs --- being special-clique ---, while the $\F_p$-cohomology groups of degree 1 and 2 (which depend only on the incidence structure of the underlying graphs, as stated above) do not.

It is well-known that the cohomology algebra with coefficients in an arbitrary field of the discrete RAAG associated to a graph (with only undirected edges) is the associated exterior Stanley-Reisner algebra over that field (cf., e.g., \cite[Thm.~1.2]{bartholdi} or \cite[\S~3.2]{papa:raags}).
Moreover, by a result of K.~Lorensen, one knows that the $\F_p$-cohomology of the pro-$p$ RAAG associated to a graph (with only undirected edges) is isomorphic to the associated exterior Stanley-Reisner $\F_p$-algebra (see \cite[Thm.~2.6]{lorensen}).
On the other hand, it is an open problem to determine which digraphs yield oriented pro-$p$ RAAGs whose whole $\F_p$-cohomology algebra is isomorphic to the whole associated exterior Stanley-Reisner $\F_p$-algebra (see \cite[\S~5.6]{QSV}).
We prove that this happens for oriented pro-$p$ RAAGs whose $\F_p$-cohomology has no essential Massey products.

\begin{thm}\label{thm:quad intro}
Let $\Gamma=(\calV,\calE)$ be a special-clique digraph, and let $G$ the oriented pro-$p$ RAAG associated to $\Gamma$ and to a $p$-power $q=p^f$ {\rm(}with $f\geq2$ in case $p=2${\rm)}.
Then $$\bfH^\bullet(G,\F_p)\simeq\bfLam_\bullet(\Gamma^\ast).$$
\end{thm}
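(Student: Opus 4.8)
The plan is to build the isomorphism $\bfH^\bullet(G,\F_p)\simeq\bfLam_\bullet(\Gamma^\ast)$ degree by degree, using Theorem~\ref{thm:main intro} to control the higher-degree behaviour. We already know from the discussion preceding Theorem~\ref{thm:quad intro} that there are canonical isomorphisms $\rmH^1(G,\F_p)\simeq\Lambda_1(\Gamma^\ast)$ and $\rmH^2(G,\F_p)\simeq\Lambda_2(\Gamma^\ast)$, and that these are compatible with the cup product in the sense that the cup product $\rmH^1\otimes\rmH^1\to\rmH^2$ corresponds to multiplication $\Lambda_1(\Gamma^\ast)\otimes\Lambda_1(\Gamma^\ast)\to\Lambda_2(\Gamma^\ast)$ in the Stanley--Reisner algebra. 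Since $\bfLam_\bullet(\Gamma^\ast)$ is a quadratic algebra — it is generated in degree~$1$ with relations in degree~$2$ — the multiplication maps $\rmH^1(G,\F_p)^{\otimes n}\to\rmH^n(G,\F_p)$ factor through $\Lambda_n(\Gamma^\ast)$, giving a well-defined surjective graded algebra homomorphism $\Phi\colon\bfLam_\bullet(\Gamma^\ast)\twoheadrightarrow\bfH^\bullet(G,\F_p)$ onto the subalgebra generated by degree~$1$ (here one uses that $\bfH^\bullet(G,\F_p)$ is graded-commutative, so the exterior relations $v^2=0$ hold, noting $p$ odd or $f\geq 2$ handles the characteristic~$2$ subtlety). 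The two things left to prove are: (1) $\Phi$ is injective, i.e.\ $\bfH^\bullet(G,\F_p)$ is quadratic / Koszul-type in the sense that no extra relations appear; and (2) $\Phi$ is surjective in every degree, i.e.\ $\bfH^\bullet(G,\F_p)$ is generated in degree~$1$.

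For surjectivity in all degrees, the key input is the absence of essential Massey products together with Dwyer's theorem (Proposition~\ref{prop:unipotent representation}). The strategy is to show that if $\bfH^\bullet(G,\F_p)$ were not generated in degree~$1$, one could produce a cohomology class not lying in the image of $\Phi$, and then to contradict either the Massey-vanishing property or a dimension count. Concretely, I would proceed by induction on the degree $n$: assuming $\Phi$ is an isomorphism in degrees $<n$, one wants $\rmH^n(G,\F_p)$ to coincide with the degree-$n$ component of the image of $\Phi$. Here it is cleanest to use the explicit structure of special-clique digraphs: by condition~(a) every black vertex is a sinkhole and by condition~(b) the origins of edges pointing at a common sinkhole form a clique. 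This means $\Gamma$ decomposes (in a suitable sense) into an undirected part and a collection of "decorated" cliques each attached to a sinkhole; for the undirected part Lorensen's theorem \cite[Thm.~2.6]{lorensen} already gives the Stanley--Reisner conclusion, and for each decorated clique one can compute $\bfH^\bullet$ directly — the relevant oriented pro-$p$ RAAG on a clique-plus-sinkhole is built from metabelian pieces whose cohomology is known (cf.\ the analysis in \cite{BQW,QSV}). Assembling these via a Mayer--Vietoris / Lyndon--Hochschild--Serre argument, using that cohomology of a free/amalgamated pro-$p$ product and of the relevant HNN-type extensions is concentrated in low degrees, yields generation in degree~$1$ and the correct Poincaré series, matching $\bfLam_\bullet(\Gamma^\ast)$.

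For injectivity of $\Phi$, I would argue that the Poincaré series of $\bfH^\bullet(G,\F_p)$ equals that of $\bfLam_\bullet(\Gamma^\ast)$ — which follows from the same structural decomposition and induction — so that the surjection $\Phi$ between graded vector spaces of equal (finite, in each degree) dimension must be an isomorphism. Alternatively, one can check the quadratic-duality picture: $\bfLam_\bullet(\Gamma^\ast)$ is Koszul with quadratic dual the "universal enveloping"-type algebra attached to $\Gamma$, and the associated graded of the group algebra of $G$ along its lower $p$-central series is exactly that dual (this is the standard mechanism behind \cite[Thm.~1.2]{bartholdi}); then Koszulity forces $\bfH^\bullet(G,\F_p)$ to be the quadratic dual, hence $\simeq\bfLam_\bullet(\Gamma^\ast)$. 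I expect the \textbf{main obstacle} to be the inductive step establishing generation in degree~$1$ (surjectivity of $\Phi$ in high degrees): unlike the discrete or pro-$p$ RAAG case, the oriented relations $wuw^{-1}=u^{1+q}$ make the bar resolution genuinely messier, and one must leverage the special-clique hypothesis precisely to kill the higher Massey-type obstructions — this is where Theorem~\ref{thm:main intro}(ii) does the real work, guaranteeing that the cochain-level defining systems can always be completed and hence that no new indecomposable classes are created in degrees $\geq 3$.
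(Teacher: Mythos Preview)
Your central mechanism does not work. An $n$-fold Massey product $\langle\alpha_1,\ldots,\alpha_n\rangle$ is a subset of $\rmH^2(G,\F_p)$, not of $\rmH^n(G,\F_p)$; the (strong) Massey vanishing property is entirely a statement about degree~2. Completing defining systems for Massey products has nothing to do with the existence or nonexistence of indecomposable classes in $\rmH^n$ for $n\geq3$, so the sentence ``the cochain-level defining systems can always be completed and hence no new indecomposable classes are created in degrees $\geq3$'' conflates two unrelated phenomena. There is no general principle of the form ``no essential Massey products $\Rightarrow$ cohomology generated in degree~1''. Moreover, invoking Theorem~\ref{thm:main intro} here is circular in the paper's logic: the proof of the implication (iii)$\Rightarrow$(ii) (Proposition~\ref{thm:massey specialclique}) uses Theorem~\ref{thm:quad intro} to identify $\bfH^\bullet(G_w)$ with $\bfLam_\bullet(\mathrm{St}(w)^\ast)$.

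The paper's proof does not touch Massey products at all. It proceeds by induction on the number of special vertices: for a special vertex $w$, the special-clique hypothesis makes $\mathrm{St}(w)$ a clique and lets one write $\Gamma$ as the patching of $\mathrm{St}(w)$ and $\Gamma'=\Gamma\smallsetminus\{w\}$ along the (ordinary) clique $\Delta=\mathrm{St}(w)\smallsetminus\{w\}$. Lemma~\ref{prop:amalg} shows that the amalgamated free pro-$p$ product $\tilde G=G_w\amalg_{G_\Delta}G_{\Gamma'}$ is \emph{proper}; then \cite[Thm.~B]{QSV} gives quadratic $\F_p$-cohomology for $\tilde G$, and a five-term exact sequence argument shows $\tilde G\simeq G$. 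Your passing mention of a ``Mayer--Vietoris / Lyndon--Hochschild--Serre argument'' is in the right spirit, but you omit exactly the two points that carry the weight: properness of the amalgam (which is where the special hypothesis is used, via Lemma~\ref{prop:amalg}) and the external input \cite[Thm.~B]{QSV}. The Koszul-duality alternative you sketch would require computing the graded of the completed group algebra along the Zassenhaus filtration for the oriented relations, which is precisely the hard part and is not established anywhere in the paper.
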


We prove Theorem~\ref{thm:quad intro} following the strategy outlined in \cite[\S~5.6]{QSV},
and employing the combinatorial properties of special-clique digraphs.

Theorems~\ref{thm:main intro}--\ref{thm:quad intro} provide concrete examples of pro-$p$ groups whose $\F_p$-cohomology: gives rise to no essential Massey products --- and, complementarily, with essential Massey products ---; is a quadratic $\F_p$-algebra (see \S~\ref{ssec:stanreis} below).
As it often happens in profinite group theory, it is very important to find concrete examples of profinite groups satisfying certain given properties, especially when there there is an astounding lack of such examples (as in the case of pro-$p$ groups with quadratic $\F_p$-cohomology, see \cite[\S~1.1]{QSV}).
At this aim, it is worth stressing that the examples of pro-$p$ groups, whose $\F_p$-cohomology gives rise to essential Massey products, studied so far have presentations whose relations involve higher commutators (see, e.g., \cite[\S~7]{mt:Massey}); while oriented pro-$p$ RAAGs have relations involving only elementary commutators times, possibly, powers of generators.

One of the aims of the present work is to provide an introduction to the study of Massey products in the $\F_p$-cohomology of pro-$p$ groups --- with a mostly group-theoretic perspective and quite a concrete approach --- for non-specialists in Galois cohomology, accessible (and, hopefully, appealing) to a broad audience, in particular to graduate students working in profinite group theory.
Moreover, we believe that the techniques used to prove Theorem~\ref{thm:main intro} may be useful for further future investigations on Massey products in Galois cohomology.

\subsection{Structure of the paper}
In \S~\ref{sec:graphs} we recall some definitions on digraphs (cf. \S~\ref{ssec:graphs}), and we study some properties of special-clique graphs (cf. \S~\ref{ssec:special-clique}--\ref{ssec:patching}). Also, we recall the definition of the exterior Stanley-Reisner algebra associated to a digraph (cf. \S~\ref{ssec:stanreis}).

In \S~\ref{sec:RAAGs}, after introducing some notation (cf. \S~\ref{ssec:not}), we recall some properties of oriented pro-$p$ RAAGs, providing several concrete examples (cf. \S~\ref{ssec:or}--\ref{ssec:or prop}).

In \S~\ref{sec:H} we provide a short {vademecum} on $\F_p$-cohomology of pro-$p$ groups (cf. \S~\ref{ssec:cohom}), for the convenience of the reader, and we recall some facts on the $\F_p$-cohomology of oriented pro-$p$ RAAGs (cf. \S~\ref{ssec:cohom RAAGS}).
Then, we prove Theorem~\ref{thm:quad intro} (cf. \S~\ref{ssec:H specialclique}).

In \S~\ref{sec:massey} we recall some definitions on Massey products, and the ``translation'' of Massey products in terms of upper unitriangular representations. Also, we prove some facts on upper unitriangular matrices with entries in $\F_p$ which will be used to prove Theorem~\ref{thm:main intro}.

Finally, \S~\ref{sec:massey RAAGs} is devoted to the proof of Theorem~\ref{thm:main intro}: the equivalence between statement~(i) and statement~(iii) is proved in \S~\ref{ssec:i}, while the equivalence between statement~(ii) and statement~(iii) is proved in \S~\ref{ssec:ii}.


{\small \subsection*{Acknowledgments}
The author wishes to thank: I.~Efrat, E.~Matzri, J.~Mina\v{c}, F.W.~Pasini, and N.D.~T\^an, for several inspiring discussions on Massey products in Galois cohomology; and S.~Blumer, A.~Cassella, I.~Foniqi, I.~Snopce, M.~Vannacci, and Th.S.~Weigel, for several inspiring discussions on digraphs and pro-$p$ groups.

\noindent The author acknowledges his membership to the national group GNSAGA (Algebraic Structures and Algebraic Geometry) of the National Institute of Advanced Mathematics -- a.k.a. INdAM -- ``F. Severi''.
}


\section{Digraphs and pro-$p$ groups}
\label{sec:graphs}


\subsection{Digraphs}\label{ssec:graphs}
The formal definition of a digraph --- as it may be found, e.g., in \cite[\S~1.10]{graph:book} --- is the following: a pair of disjoint sets $\Gamma=(\calV,\calE)$ together with two maps $i\colon \calE\to\calV$ and $t\colon \calE\to\calV$ --- the first one gives the initial vertex of a directed edge, and the second one gives the terminal vertex of a directed edge
Since we assume throughout the paper that every digraph has no loops --- i.e., no directed edges $\mathbf{e}\in\calE$ satisfying $i(\mathbf{e})=t(\mathbf{e})$ ---, nor parallel directed edges --- i.e., two directed edges $\mathbf{e}_1,\mathbf{e}_2\in\calE$ are parallel if $i(\mathbf{e}_1)=i(\mathbf{e}_2)$ and $t(\mathbf{e}_1)=t(\mathbf{e}_2)$ ---, we can consider $\calE$ as a subset of $\calV\times\calV\smallsetminus\{(v,v)\}$, as done in the Introduction.

In the following we recall some definitions which generalize some well-known notions on graphs (cf., e.g., \cite[pp.~3--4]{graph:book}).

\begin{defin}\rm
Let $\Gamma=(\calV,\calE)$ be a digraph.
 \begin{itemize}
 \item[(a)] $\Gamma$ is said to be {complete} if for any couple $v,w\in\calV$, one has $(v,w)\in\calE$ or $(w,v)\in\calE$.
  \item[(b)] An {induced subdigraph} of $\Gamma$ is a digraph $\Gamma'=(\calV',\calE')$ such that $\calV'\subseteq\calV$ and
  \[\calE'=\calE\cap(\calV'\times\calV');\]
  it is said to be {proper} if $\calV'\subsetneq\calV$ and $\calV'\neq\varnothing$.
  In particular, an induced subdigraph of $\Gamma$ which is a complete digraph is called a {clique} of $\Gamma$.
  \item[(c)] The {star} of a vertex $v\in\calV$ is the induced subdigraph $\mathrm{St}(v)=(\calV_v,\calE_v)$ of $\Gamma$ whose vertices are $v$ and all other vertices of $\Gamma$ which are adjacent to $v$, i.e.,
  \[
   \calV_v=\{\:v\:\}\cup\{\:w\:\mid\:(v,w)\in\calE\text{or }(w,v)\in\calE\:\}.
  \] \end{itemize}
\end{defin}

For digraphs, we will make use of the following.

\begin{defin}\label{defin:for digraph}\rm
Let $\Gamma=(\calV,\calE)$ be a digraph.
\begin{itemize}
 \item[(a)] We call a vertex $w\in\calV$ a special vertex if there exists at least another vertex $v\in\calV$ such that $(v,w)\in\calE$ but $(w,v)\notin\calE$; otherwise we call $v$ an ordinary vertex.
 \item[(b)] We call a special vertex $w$ a sinkhole if for any other vertex $v\in\calV$ such that $(v,w)\in\calE$ one has $(w,v)\notin\calE$.
\end{itemize}
\end{defin}

From now on, if $(v,w),(w,v)\in\calE$ for two vertices $v,w\in\calV$, we will identify these two directed edges and (with an abuse of notation) we will consider them as a single, undirected, edge; while by a directed edge we will mean only an edge $(v,w)\in\calE$ such that $(w,v)\notin\calE$.
We will represent a digraph as follows: special and ordinary vertices are respectively black and white dots, while if $(v,w),(w,v)\in\calE$ we drow a single unoriented arc joining the vertices $v$ and $w$.
For example, the three diagrams
\begin{equation}\label{eq:examples graphs intro}
 \xymatrix@R=1.5pt{&v_1\\ & \bullet\ar@{-}[ddd]  \\ \\ \\ \circ\ar@{-}[r]\ar[uuur]  & \circ\\v_2&v_3}
 \qquad\qquad\qquad
 \xymatrix@R=1.5pt{v_1&v_2 \\ \bullet\ar[r] & \bullet  \\ \\ \\
 \circ\ar@{-}[r]\ar[ruuu]\ar[uuu] & \circ\ar[uuul]\ar@{-}[uuu] \\ v_3&v_4}
 \qquad\qquad\qquad
 \xymatrix@R=1.5pt{v_1&w&v_5\\ \circ\ar[r] &\bullet&\circ\ar[l] \\  \\  \\
 \circ\ar@{-}[uuu]\ar[uuur]\ar@{-}[r] &\circ\ar@{-}[r]\ar[uuu]& \circ\ar[uuul]\ar@{-}[uuu] \\ v_2&v_3&v_4}
 \end{equation}
represent three digraphs, respectively with 3, 4 and 6 vertices.
Observe that a special vertex may be an end of an undirected edge (as $v_1$ in the first diagram), or even the first coordinate of a directed edge (as $v_1$ in the second diagram, or $v_2$ in \eqref{eq:exagraph intro}); while the vertex $w$ in the third diagram (and also $v_1$ in \eqref{eq:exagraph intro}) is a sinkhole.

\begin{rem}\label{rem:non or graph}\rm
Given a digraph $\Gamma=(\calV,\calE)$, set
\[
 |\calE|=\left\{\:\{v,w\}\in\mathcal{P}_2(\calV)\:\mid\:(v,w)\in\calE\text{or }(w,v)\in\calE\:\right\}.
\]
Then the pair $|\Gamma|=(\calV,|\calE|)$ is a graph (in the sense of \cite[\S~1.1]{graph:book}).
Conversely, from a graph $\mathrm{G}=(\calV,\mathrm{E})$ --- where $\mathrm{E}\subseteq\mathcal{P}_2(\calV)$ --- one may construct the digraph $\mathrm{G}_{\mathrm{or}}=(\calV,\mathrm{E}_{\mathrm{or}})$, with
$$\mathrm{E}_{\mathrm{or}}=\left\{\:(v,w),(w,v)\in\calV\times\calV\:\mid\:\{v,w\}\in\mathrm{E}\:\right\}.$$
Clearly, $|\mathrm{G}_{\mathrm{or}}|=\mathrm{G}$; on the other hand, given a digraph $\Gamma=(\calV,\calE)$, one has $|\Gamma|_{\mathrm{or}}=\Gamma$ if, and only if, $\Gamma$ has only undirected edges.
Henceforth, we will identify graphs and digraphs with only undirected edges via the functors $\textvisiblespace_{\mathrm{or}}$ and $|\textvisiblespace|$, and we will call the latter ``undigraphs''.
\end{rem}


\subsection{Special digraphs and special-clique digraphs}\label{ssec:special-clique}

\begin{defin}\label{defin:special graph}\rm

Let $\Gamma=(\calV,\calE)$ be a digraph.
\begin{itemize}
 \item[(a)] $\Gamma$ is said to be {special} if every special vertex is a sinkhole.
 \item[(b)] $\Gamma$ is said to be {special-clique} if it is special and moreover the star of every special vertex is a clique of $\Gamma$.
\end{itemize} 
\end{defin}

The former definition was introduced in \cite[\S~2.3]{BQW}.
It is straightforward to see that both properties are inherited by induced subdigraphs.

\begin{rem}\label{rem:special}\rm
 \begin{itemize}
  \item[(a)] An undigraph is always special and special-clique, as it has no special vertices (and thus the conditions in Definition~\ref{defin:special graph} are trivially satisfied).
  \item[(b)] If $\Gamma=(\calV,\calE)$ is a special digraph and $v_1,v_2\in\calV$ are two distinct special vertices, then $v_1,v_2$ are disjoint --- namely, $(v_1,v_2),(v_2,v_1)\notin \calE$.
  \item[(c)] A complete digraph is special (and special-clique) if, and only if, it has at most a special vertex, and such a vertex is a sinkhole.
 \end{itemize}
\end{rem}

For example, the first two digraphs represented in \eqref{eq:examples graphs intro} are not special --- and thus neither special-clique ---, as their directed edges are not sinkholes. On the other hand, the third diagram represents a
special digraph, as the only special vertex is a sinkhole, but it is not special-clique, as the star of the only special vertex (which is the whole digraph) is not complete.

\begin{exam}\label{exam:special square}\rm
Consider the digraphs with geometric representations
\begin{equation}\label{eq:examples graphs square}
 \xymatrix@R=1.5pt{\circ\ar[r]\ar[ddd] & \bullet  \\ \\ \\ \bullet\ar[uuur]  & \circ\ar[uuu]\ar[l]}
 \qquad\qquad\qquad
 \xymatrix@R=1.5pt{\circ\ar[r] & \bullet  \\ \\ \\
 \circ\ar@{-}[r]\ar[ruuu]\ar@{-}[uuu] & \circ\ar[uuu]\ar@{-}[l]} 
 \qquad\qquad\qquad
  \xymatrix@R=1.5pt{\circ\ar[r]\ar@{-}[dddr]\ar[ddd] & \bullet  \\ \\ \\ \bullet  & \circ\ar[uuu]\ar[l]}
 \end{equation}
 The left-one is not special as the bottom special vertex is not a sinkhole; the center-one is special but not special-clique as the star of the only special vertex is not a clique; the right-one is special-clique.
\end{exam}

It is easy to see that a digraph $\Gamma=(\calV,\calE)$ is special if, and only if, it contains no induced subdigraphs with three vertices whose geometric representation is
\begin{equation}\label{eq:subdigraphs no special}
 \xymatrix@R=1.5pt{& \bullet\ar[ddr] &  \\  \\ \circledast\ar@{--}[rr]\ar[uur]&  & \bullet}
 \qquad\text{or}\qquad
 \xymatrix@R=1.5pt{& \bullet\ar@{-}[ddr] &   \\ \\ \circledast\ar@{--}[rr]\ar[uur]&  &\circledast} 
\end{equation}
--- no matter whether the bottom vertices are joined or not (here we use $\circledast$ to represent vertices which are not necessarily ordinary nor special).
Moreover, a digraph $\Gamma=(\calV,\calE)$ is special-clique if, and only if, it contains no induced subdigraphs with three vertices whose geometric representation is as in \eqref{eq:subdigraphs no special}, nor
\begin{equation}\label{eq:subdigraph no specialclique}
  \xymatrix@R=1.5pt{& \bullet &  \\  \circ\ar[ur]&  & \circ\ar[ul]}
\end{equation}


\subsection{Patching of digraphs}\label{ssec:patching}

a digraph $\Gamma=(\calV,\calE)$ is said to be the {patching} of two induced subdigraphs $\Gamma_1,\Gamma_2$ along a common subdigraph $\Gamma'$ if there are three induced subdigraphs $\Gamma_1=(\calV_1,\calE_1)$, $\Gamma_2=(\calV_2,\calE_2)$ and $\Gamma'=(\calV',\calE')$ of $\Gamma$ such that
$$\calV=\calV_1\cup\calV_2,\qquad\calV'=\calV_1\cap\calV_2, \qquad\calE=\calE_1\cup\calE_2$$
(cf. \cite[pp.~4--5]{BQW}).

Not every digraph may be constructed as the patching of two proper induced subdigraphs, as, for example, one has the following fact (whose proof is left to the reader).

\begin{fact}\label{lemma:patching complete}
 Let $\Gamma=(\calV,\calE)$ be a complete digraph. Then $\Gamma$ may not be constructed as the patching of two proper induced subdigraphs.
\end{fact}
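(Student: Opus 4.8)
The plan is to prove the contrapositive, or rather a direct structural argument. Suppose $\Gamma=(\calV,\calE)$ is a complete digraph and suppose, for contradiction, that $\Gamma$ is the patching of two proper induced subdigraphs $\Gamma_1=(\calV_1,\calE_1)$ and $\Gamma_2=(\calV_2,\calE_2)$ along $\Gamma'=(\calV',\calE')$, so that $\calV=\calV_1\cup\calV_2$, $\calV'=\calV_1\cap\calV_2$ and $\calE=\calE_1\cup\calE_2$. Since both subdigraphs are proper, we have $\calV_1\subsetneq\calV$ and $\calV_2\subsetneq\calV$, and neither is empty; hence I can pick a vertex $v\in\calV\smallsetminus\calV_2$ (so $v\in\calV_1\smallsetminus\calV'$) and a vertex $w\in\calV\smallsetminus\calV_1$ (so $w\in\calV_2\smallsetminus\calV'$). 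Note $v\neq w$.

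The key step is to locate the edge between $v$ and $w$ and show it cannot lie in either $\calE_1$ or $\calE_2$. Because $\Gamma$ is complete, at least one of $(v,w)$, $(w,v)$ belongs to $\calE=\calE_1\cup\calE_2$; say $(v,w)\in\calE_1\cup\calE_2$ (the other case is symmetric). If $(v,w)\in\calE_1$, then since $\Gamma_1$ is an induced subdigraph, both endpoints $v$ and $w$ lie in $\calV_1$; but $w\notin\calV_1$, a contradiction. If instead $(v,w)\in\calE_2$, then both $v$ and $w$ lie in $\calV_2$; but $v\notin\calV_2$, again a contradiction. Thus no edge between $v$ and $w$ can belong to $\calE_1\cup\calE_2=\calE$, contradicting completeness of $\Gamma$. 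This establishes the claim.

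I do not expect any serious obstacle here: the entire argument rests only on the definition of "induced subdigraph" (edges are determined by the vertex set it spans, so an edge of $\Gamma_i$ forces both its endpoints into $\calV_i$) together with the fact that properness of both $\Gamma_1$ and $\Gamma_2$ guarantees the existence of the two "separated" vertices $v$ and $w$. The only point requiring minor care is to make sure $v$ and $w$ are distinct and that one genuinely gets $v\in\calV_1\smallsetminus\calV_2$ and $w\in\calV_2\smallsetminus\calV_1$ rather than, say, both avoiding $\calV'$ in a way that still leaves them in a common $\calV_i$; but $v\notin\calV_2$ and $w\notin\calV_1$ rule out exactly the bad cases, so this is immediate. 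The argument is short enough that it is reasonable that the authors leave it to the reader, as indicated.
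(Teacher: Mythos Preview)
Your argument is correct and is exactly the standard one: properness of $\Gamma_1$ and $\Gamma_2$ together with $\calV=\calV_1\cup\calV_2$ yields vertices $v\in\calV_1\smallsetminus\calV_2$ and $w\in\calV_2\smallsetminus\calV_1$, and the edge between them (which exists by completeness) cannot lie in $\calE_1\cup\calE_2$ because $\calE_i=\calE\cap(\calV_i\times\calV_i)$. The paper gives no proof of this fact, explicitly leaving it to the reader, so there is nothing to compare against; your write-up is precisely what is intended.
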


\begin{exam}\rm
\begin{itemize}
 \item[(a)] The digraph represented in \eqref{eq:exagraph intro} is the patching of the induced subdigraphs with vertices respectively $\{v_1,v_2,v_3\}$ and $\{v_1,v_3,v_4\}$, along the common subdigraph with vertices $\{v_1,v_3\}$.
 \item[(b)] The third digraph represented in \eqref{eq:examples graphs intro} is the patching of the two ``square'' induced subdigraphs --- with vertices respectively $\{w,v_1,v_2,v_3\}$ and $\{w,v_3,v_4,v_5\}$ ---, which are equal, along the common subdigraph with vertices $\{w,v_3\}$; while the second digraph represented in \eqref{eq:examples graphs intro} is not the patching of any pair of induced subdigraphs, as it is complete (cf. Lemma~\ref{lemma:patching complete}).
 \item[(c)] Each of the three graphs in Example~\ref{exam:special square} is the patching of the two ``triangles'' subdigraphs along the subdigraph which consists of the diagonal of the square.
\end{itemize}
\end{exam}

Let $\Gamma=(\calV,\calE)$ be a special-clique digraph, and let $\Gamma_0=(\calV_0,\calE_0)$ be the induced subdigraph of $\Gamma$ whose vertices $\calV_0$ are all the ordinary vertices of $\Gamma$ --- roughly speaking, $\Gamma_0$ is the maximal unoriented induced subdigraph of $\Gamma$.
It follows from the definition of special-clique digraphs that $\Gamma$ may be constructed by iterating the patching of $\Gamma_0$ with $\mathrm{St}(v)=(\calV_v,\calE_v)$, for every special vertex $v$ of $\Gamma$, along the common clique with vertices $\calV_v\cap\calV_0$.

\begin{exam}\label{exam:special-clique}\rm
The digraph $\Gamma=(\calV,\calE)$ with geometric representation
\[ \xymatrix@R=1.5pt{w_1 &u_1 & u_2 & w_2\\
 \bullet & \circ\ar[l] \ar@{-}[dddl]\ar@{-}[r]\ar@{-}[ddd] & \circ\ar[r] & \bullet \\ \\ \\
 \circ\ar[uuu]\ar@{-}[r] & \circ\ar[uuul]\ar@{-}[r] & \circ\ar@{-}[uuu]\ar[ruuu]\ar[r] & \bullet\\
 u_3&u_4&u_5&w_3 }\]
  is special-clique.
Let $\Gamma_0=(\calV_0,\calE_0)$ be the induced subdigraph with vertices $\calV_0=\{u_1,\ldots,u_5\}$, and let $\Delta_1,\Delta_2,\Delta_3$ be the cliques of $\Gamma$ (and of $\Gamma_0$) with vertices respectively $\{u_1,u_3,u_4\}$, $\{u_2,u_5\}$ and $\{u_5\}$.
Then $\Gamma$ may be constructed by patching $\Gamma_0$ with $\mathrm{St}(w_1)$ along $\Delta_1$; and then patching the resulting digraph with $\mathrm{St}(w_2)$ along $\Delta_2$; and finally patching the resulting digraph with $\mathrm{St}(w_3)$ along $\Delta_3$.
\end{exam}


\subsection{The exterior Stanley-Reisner $\F_p$-algebra}\label{ssec:stanreis}

Let $\Gamma=(\calV,\calE)$ be a digraph, and let $V^\ast$ be the $\F_p$-vector space with basis $\calV^\ast=\{v^\ast\colon \calV\to \F_p\:\mid\:v\in\calV\}$, where
\[
 v^\ast(u)=\begin{cases} 1 & \text{if }u=v,\\0&\text{if }u\neq v.\end{cases}
\]
The {exterior Stanley-Reisner $\F_p$-algebra} $\mathbf{\Lambda}_\bullet(\Gamma^\ast)$ associated to $\Gamma$ is the graded $\F_p$-algebra
 \begin{equation}\label{eq:Stanley}
    \mathbf{\Lambda}_\bullet(\Gamma^\ast)=\dfrac{\mathbf{\Lambda}_\bullet(V^\ast)}
  {\left(v^\ast\wedge w^\ast\:\mid\:\{v,w\}\not\in|\calE|\right)},
 \end{equation}
 where $\bfLam_\bullet(V^\ast)$ denotes the exterior $\F_p$-algebra generated by $V^\ast$.
 It is easy to see that for each $n\geq0$, the space $\Lambda_n(\Gamma^\ast)$ has a basis which is in 1-to-1 correspondence with the cliques of $\Gamma$ with $n$ vertices --- e.g., for $n=2$ a basis is given by $\{v^\ast\wedge w^\ast\mid \{v,w\}\in|\calE|\}$.
 In particular, $\Lambda_n(\Gamma^\ast)$ is trivial if $\Gamma$ has less than $n$ vertices.

 \begin{rem}\label{rem:Stanley-Reisner}\rm
  The above definition of the exterior Stanley-Reisner $\F_p$-algebra associated to a digraph is consistent with the definition of the exterior Stanley-Reisner $\F_p$-algebra associated to a graph (see, e.g., \cite[\S~3.2]{papa:raags}).
  In other words, if $\Gamma$ is an undigraph, then its associated exterior Stanley-Reisner $\F_p$-algebra as a digraph, and the exterior Stanley-Reisner $\F_p$-algebra associated to the graph $|\Gamma|$, are equal.
 \end{rem}

 The exterior Stanley-Reisner $\F_p$-algebra is a {quadratic} graded $\F_p$-algebra, which means that the whole structure as a graded $\F_p$-algebra is determined by the spaces of degree 1 and 2 (cf., e.g., \cite[\S~2.1]{QSV}), as the ideal we quotient on in \eqref{eq:Stanley} is generated by elements of degree 2.



\section{Oriented pro-$p$ RAAGs}
\label{sec:RAAGs}

\subsection{Notation}\label{ssec:not}
Given an arbitrary group $H$, the commutator of two elements $x,y\in H$ is
$$[x,y]=xyx^{-1}y^{-1}.$$
Given three elements $x,y,z\in H$, one has the well-known equalities
\begin{equation}\label{eq:comm times}
 [xy,z]=[x,[y,z]][y,z][x,x]\qquad\text{and}\qquad
 [x,yz]=[x,y][y,[x,z]][x,z].
\end{equation}
from which one deduces
\begin{equation}\label{eq:comm power}
 \left[x^n,y\right]=[x,y]^n\qquad\text{and}\qquad \left[x,y^n\right]=[x,y]^n
\end{equation}
if $x,y\in H$ satisfy, respectively, $[x,[x,y]]=1$ and $[[x,y],y]=1$.

Given a pro-$p$ group $G$, every subgroup will be implicitly assumed to be {closed}, and the generators of $G$, or of a closed subgroup, are to be intended in the topological sense.
In particular,
\begin{itemize}
 \item[(a)] $G'=[G,G]$ will denote the closed derived subgroup of $G$, namely, $G'$ is the closed subgroup of $G$ generated by the commutators $[x,y]$ with $x,y$ running through all elements of $G$;
 \item[(b)] for every $n\geq1$, $G^{p^n}$ will denote the closed subgroup generated by the elements $x^{p^n}$, with $x$ running through all elements of $G$;
 \item[(c)] the {Frattini subgroup} $\Phi(G)$ of $G$ is the closed subgroup of $G$ generated by $G'$ and $G^p$.
\end{itemize}
All the above subgroups are normal subgroups; in particular the quotient $G/\Phi(G)$ is a $\F_p$-vector space.


\subsection{Oriented pro-$p$ RAAGs}\label{ssec:or}

From now on, $q$ will denote a power $p^f$ of the prime number $p$, with $f\geq1$ --- we will tacitly assume that $f\geq2$ in case $p=2$.

Given a digraph $\Gamma=(\calV,\calE)$, let $F_{\calV}$ be the free pro-$p$ group generated by $\calV$.
Given $q$, let $R_{\calE,q}$ be the normal subgroup of $F$ generated as a normal subgroup by the elements $r_{v,w}$ with $\{v,w\}$ running through the elements of $|\calE|$, with
\[
 r_{v,w}=\begin{cases} [v,w] & \text{if }(v,w)\text{ is an undirected edge}, \\
 [w,v]v^{-q} & \text{if }(v,w)\text{ is a directed edge}.
           \end{cases}\]
The oriented pro-$p$ RAAG associated to $\Gamma$ and to $q$ is $G=F_{\calV}/R_{\calE,q}$.
Observe that if $\Gamma$ is an undigraph, then $G$ coincides with the pro-$p$ RAAG associated to the graph $|\Gamma|$, i.e., the pro-$p$ completion of the discrete RAAG associated to $|\Gamma|$ (and obviously it does not depend on the choice of $q$).

\begin{rem}\label{rem:minimal presentation}\rm
 The normal subgroup $R_{\calE,q}$ is contained in $\Phi(F_{\calV})$, whence $G=F_{\calV}/R_{\calE,q}$ is a minimal presentation of $G$ (cf., e.g., \cite[Ch.~III, \S~9, pp.~224--226]{nsw:cohn}).
 In particular, $\calV$ is a minimal generating set of $G$.
\end{rem}

\begin{exam}\label{ex:complete RAAGs}\rm
Let $\Gamma=(\calV,\calE)$ be a complete special digraph, with $\calV=\{w,v_1,\ldots,v_d\}$, $d\geq1$ and $w$ the only special vertex (cf. Remark~\ref{rem:special}--(c)).
Then the oriented pro-$p$ RAAG associated to $\Gamma$ and $q$ is the pro-$p$ group
\[
 G=\left\langle\:w,v_1,\ldots,v_d\:\mid\:[v_i,v_j]=1,\:wv_iw^{-1}=v_i^{1+q}\:\forall\: i=1,\ldots,d\:\right\rangle\simeq \Z_p^d\rtimes \Z_p.
\]
\end{exam}

\begin{rem}\label{rem:orientation RAAG}\rm
 In \cite{BQW}, the oriented pro-$p$ RAAG associated to a digraph $\Gamma$ and to $q$ is defined to be the pair $(G,\theta)$, where $G$ is the pro-$p$ group defined above, and $\theta\colon G\to1+p\Z_p$ is a homomorphism of pro-$p$ groups, the {orientation} of the oriented pro-$p$ RAAG.
 Since we will make no use of such homomorphism, for simplicity we define an oriented pro-$p$ RAAG to be only the pro-$p$ group defined above, without considering the associated homomorphism $\theta$.
\end{rem}

The pro-$p$ RAAGs associated to undigraphs are torsion-free pro-$p$ groups;
in fact, more generally, if $\Gamma$ is an undigraph and $\Gamma'$ is an induced subdigraph of $\Gamma$, the pro-$p$ RAAG associated to $\Gamma'$ is a subgroup of the pro-$p$ RAAG associated to $\Gamma$.
This may not occur for an oriented pro-$p$ RAAGs associated to a digraph, as shown by the following examples.

\begin{exam}\label{ex:mennike}\rm
 Let $\Gamma=(\calV,\calE)$ be the digraph with geometric representation
 \[  \xymatrix@R=1.5pt{& v_2 & \\
& \bullet\ar@/^/[dddr] &  \\ \\ \\  \bullet\ar@/^/[uuur] && \bullet\ar@/^/[ll]  \\  v_1& &v_3 }\]
For $q=p$, the associated oriented pro-$p$ RAAG is
\[
 G=\left\langle\:v_1,v_2,v_3\:\mid\:[v_1,v_2]=v_2^p,[v_2,v_3]=v_3^p,[v_3,v_1]=v_1^p,\:\right\rangle,
\]
which is a finite $p$-group, as shown by J.~Mennike (cf. \cite[ Ch.~I, \S~4.4, Ex.~2(e)]{serre:galc}).
In particular, the oriented pro-$p$ RAAG associated to a single vertex (which is isomorphic to $\Z_p$) and the oriented pro-$p$ RAAG associated to an edge (which is isomorphic to $\Z_p\rtimes \Z_p$, cf. Example~\ref{ex:complete RAAGs}) are not subgroups of $G$ (cf. \cite[Ex.~4.7]{BQW}).
\end{exam}

\begin{exam}\label{ex:torsion}\rm
 Let $\Gamma=(\calV,\calE)$ be the digraph with geometric representation
 \[  \xymatrix@R=1.5pt{& v_1 & \\
& \bullet &  \\ \\ \\  \bullet\ar@/^/[uuur] && \circ\ar@/_/[uuul]\ar@/^/[ll]  \\  v_2& &v_3 }\]
For $q=p$, the associated oriented pro-$p$ RAAG is
\[
 G=\left\langle\:v_1,v_2,v_3\:\mid\:[v_1,v_2]=v_2^p,\:[v_1,v_3]=[v_2,v_3]=v_3^p\:\right\rangle.
\]
Then $[v_2^q,v_3]=v_3^{(1+q)^q-1}$; on the other hand, one computes
\[\begin{split}
\left[[v_2^q,v_3]\right]=\left[v_1v_2v_1^{-1}v_2^{-1},v_3\right] &=
v_1\left(v_2\left(v_1^{-1}\left(v_2^{-1}v_3v_2\right)v_1\right)v_2^{-1}\right)v_1^{-1}\cdot v_3^{-1} \\
&= \left(\left(\left(v_3^{(1+q)^{-1}}\right)^{\epsilon(1+q)^{-1}}\right)^{1+q}\right)^{\epsilon(1+q)}\cdot v_3^{-1}\\
&=v_3^1\cdot v_3^{-1}=1.
\end{split}\]
Since $(1+q)^q-1=(1+q^2+q^3(q-1)/2+\ldots)-1\neq0$, the vertex $v_3$ yields non-trivial torsion.
In particular, the oriented pro-$p$ RAAG associated to the digraph consisting of the single vertex $v_3$ is not a subgroup of $G$.
\end{exam}

Observe that the digraphs in Examples~\ref{ex:mennike}--\ref{ex:torsion} are not special.
Indeed, for special digraphs one has the following result (cf. \cite[Prop.~4.11]{BQW}).

\begin{prop}\label{prop:special clique inclusion}
 Let $\Gamma=(\calV,\calE)$ be a special digraph, and let $G$ be the oriented pro-$p$ RAAG associated to $\Gamma$ and to $q$.
 If $\Delta=(\calV_\Delta,\calE_\Delta)$ is a clique of $\Gamma$ and $G_\Delta$ the oriented pro-$p$ RAAG associated to $\Delta$, then the inclusion $\calV_\Delta\hookrightarrow\calV$
 induces a monomorphism of pro-$p$ groups $G_\Delta\hookrightarrow G$.
\end{prop}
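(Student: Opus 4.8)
\textit{Proof proposal.} The plan is to build an auxiliary pro-$p$ group $P$ into which $G_\Delta$ manifestly embeds, together with a homomorphism $\lambda\colon G\to P$ whose composition with the canonical map $\iota\colon G_\Delta\to G$ is exactly that embedding; this forces $\iota$ to be injective. First I would record the relevant combinatorics. Since $\Delta$ is a complete special digraph, by Remark~\ref{rem:special}(c) it has at most one special vertex $w_0$, which is a sinkhole, and its remaining vertices are ordinary and pairwise joined; moreover, as $\Delta$ is an induced subdigraph, if $w_0$ exists then it is also a special vertex of $\Gamma$. I would also note that the special vertices $w_1,\dots,w_s$ of $\Gamma$ are pairwise non-adjacent (Remark~\ref{rem:special}(b)) and that each of them has only ordinary neighbours, joined to it by an edge directed into it or by an undirected edge --- indeed a vertex adjacent to a special vertex would itself be special, contradicting disjointness.

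Next I would construct $P$. For each special vertex $w_i$ of $\Gamma$ with $w_i\notin\calV_\Delta$, let $\Phi_i\colon G_\Delta\to G_\Delta$ be the endomorphism sending $v\mapsto v^{1+q}$ whenever $v\in\calV_\Delta$ is the initial vertex of an edge of $\Gamma$ directed into $w_i$, and fixing every other generator of $G_\Delta$. Because $1+q$ lies in $\Z_p^\times$, the endomorphism scaling those same generators by $(1+q)^{-1}$ is a two-sided inverse; so --- once one checks that $\Phi_i$ preserves the defining relations of $G_\Delta$, which uses only that $\Phi_i$ fixes the special vertex $w_0$ of $\Delta$ and that conjugation by $w_0$ commutes with passing to $v$-powers --- one has $\Phi_i\in\Aut(G_\Delta)$. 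I would then let $F$ be the free pro-$p$ group on symbols $\{\,s_i : w_i\notin\calV_\Delta\,\}$ and set $P:=G_\Delta\rtimes F$, with $s_i$ acting through $\Phi_i$; this is a genuine pro-$p$ semidirect product (each $\Phi_i$ is an automorphism and $\Phi_i^{p^k}\to\mathrm{id}$), and its base subgroup $G_\Delta$ embeds canonically into $P$.

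Finally I would define $\lambda\colon G\to P$ on generators by $v\mapsto v$ for $v\in\calV_\Delta$, $v\mapsto 1$ for ordinary $v\in\calV\setminus\calV_\Delta$, and $w_i\mapsto s_i$ for special $w_i\in\calV\setminus\calV_\Delta$, and verify that every defining relator $r_{v,w}$ of $G$ is killed. The relators whose vertices both lie in $\calV_\Delta$ are already relators of $G_\Delta$ (as $\Delta$ is induced), hence hold in $P$; the relators in which at least one vertex is sent to $1$ collapse immediately; the relators expressing conjugation by some $w_i\notin\calV_\Delta$ on a neighbour $v\in\calV_\Delta$ become exactly the defining identities $s_i v s_i^{-1}=\Phi_i(v)$ of $P$; and --- this is where one uses that distinct special vertices are non-adjacent --- there simply are no relators involving two special vertices, which matches the freeness of $F$. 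Once $\lambda$ is available, $\lambda\circ\iota$ sends each $v\in\calV_\Delta$ to itself, so it is the canonical inclusion $G_\Delta\hookrightarrow P$ and is injective; therefore $\iota$ is injective, which is the assertion.

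The main obstacle is precisely the construction of $\lambda$, and within it the fact that each $\Phi_i$ is an \emph{automorphism} of $G_\Delta$, so that $P$ really is a semidirect product with $G_\Delta$ as a subgroup rather than some quotient. Here the hypothesis that $\Gamma$ be special is indispensable: it is exactly what guarantees that special vertices are sinkholes with ordinary neighbours and are mutually non-adjacent, and hence that every relator of $G$ can be absorbed into $P$. For digraphs that are not special no such homomorphism exists --- the ``non-sinkhole'' relators cannot be accommodated --- and indeed cliques need not embed at all, as the collapsing and torsion phenomena of Examples~\ref{ex:mennike}--\ref{ex:torsion} demonstrate.
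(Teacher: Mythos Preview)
Your retraction argument is sound and proves the proposition: build $P=G_\Delta\rtimes F$, define $\lambda\colon G\to P$ by killing ordinary vertices outside $\calV_\Delta$ and sending each outside special vertex $w_i$ to a free generator $s_i$ acting via $\Phi_i$, then observe that $\lambda\circ\iota$ is the canonical embedding $G_\Delta\hookrightarrow P$. The paper itself does not prove this result but simply imports it from \cite[Prop.~4.11]{BQW}, so there is no in-paper argument to compare against.

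Two small points to tidy. First, your combinatorial description of the edges at a special vertex is off: a sinkhole $w_i$ cannot be the end of an undirected edge, since by Definition~\ref{defin:for digraph}(b) every $(v,w_i)\in\calE$ forces $(w_i,v)\notin\calE$; so all edges at $w_i$ are directed into $w_i$, which in fact simplifies your $\Phi_i$ (it sends \emph{every} $v\in\calV_\Delta$ adjacent to $w_i$ to $v^{1+q}$). Your parenthetical justification that neighbours of a special vertex are ordinary is also garbled --- being the initial vertex of a directed edge does not make a vertex special --- but the desired conclusion is immediate from Remark~\ref{rem:special}(b), which you already cite. Second, for $P=G_\Delta\rtimes F$ to be a genuine pro-$p$ group you need the map $F\to\Aut(G_\Delta)$ to factor through a pro-$p$ subgroup; this holds because the $\Phi_i$ commute and each lies in the image of the natural map $1+p\Z_p\to\Aut(G_\Delta)$ scaling the relevant abelian generators, but it deserves a sentence. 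With these cosmetic fixes the proof is complete.
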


Consequently, if $\Gamma=(\calV,\calE)$ is a special-clique graph, and $G$ is an associated oriented pro-$p$ RAAG,
then for every special vertex $v\in\calV$ both the oriented pro-$p$ RAAG $G_{\mathrm{St}(v)}$ associated to the star $\mathrm{St}(v)=(\calV_v,\calE_v)$, and the oriented pro-$p$ RAAG $A$ associated to the clique with vertices $\calV_v\smallsetminus\{v\}$ (which is a free abelian pro-$p$ group), are subgroups of $G$.

\begin{rem}\label{rem:generalized}\rm
In \cite[\S~5]{QSV}, the authors introduced the notion of {generalized $p$-RAAGs} associated to {$p$-labeled} oriented graphs.
The family of oriented pro-$p$ RAAGs associated to digraphs is a subfamily of the family of generalized $p$-RAAGs (cf. \cite[\S~4.2]{BQW}), thus all results on generalized $p$-RAAGs obtained in \cite{QSV} apply also to oriented pro-$p$ RAAGs associated to digraphs.
\end{rem}

\subsection{Oriented pro-$p$ RAAGs, free products and amalgams}\label{ssec:or prop}

Let $\Gamma=(\calV,\calE)$ be a digraph, and for $q$ a $p$-power let $G$ be the associated pro-$p$ RAAG.
It follows from the definition of $G$ that if $\Gamma$ has connected components $\Gamma_1,\ldots,\Gamma_r$, then
$G$ is the free pro-$p$ product of the oriented pro-$p$ RAAGs $G_1,\ldots,G_r$ associated respectively to the digraphs $\Gamma_1,\ldots,\Gamma_r$, i.e., $G=G_1\amalg\ldots \amalg G_r$ (for the definition of free pro-$p$ product of pro-$p$ groups see, e.g., \cite[\S~9.1]{ribzal}).

For patching of digraphs we may have a similar phenomenon.
Recall that the the {amalgamated free pro-$p$ product} $G$ of two pro-$p$ groups $G_1,G_2$ with {amalgam} a common subgroup $H\subseteq G_1,G_2$, is the push-out
\[\xymatrix@R=1.5pt{& G_1\ar@{-->}[ddr]^{\psi_1} & \\ \\ H\ar@{^{(}->}[ruu]\ar@{^{(}->}[rdd] && G \\ \\ &G_2\ar@{-->}[uur]_{\psi_2}& }
\]
in the category of pro-$p$ groups, which is unique (cf. \cite[\S~9.2]{ribzal}).
We write $G= G_1\amalg_H G2$.
An amalgamated free pro-$p$ product $G= G_1\amalg_H G_2$ is said to be proper if the homomorphisms $\psi_1,\psi_2$ are monomorphisms. In that case one identifies $G_1,G_2,H$ with their images in $G$.

In case of special digraphs one has the following result, which is the ``oriented version'' of \cite[Prop.~5.22]{QSV}, which deals with a more general case (cf. Remark~\ref{rem:generalized}).

\begin{lem}\label{prop:amalg}
 Let $\Gamma_1=(\calV_1,\calE_1)$ and $\Gamma_2=(\calV_2,\calE_2)$ be two special digraphs, and let $\Delta=(\calV_\Delta,\calE_\Delta)$ be a clique of both $\Gamma_1$ and $\Gamma_2$ whose vertices are ordinary.
 Moreover, for $q=p^f$ let $G_1,G_2,G_\Delta$ be the oriented pro-$p$ RAAGs associated respectively to $\Gamma_1,\Gamma_2,\Delta$.
 Then $G_\Delta$ is a subgroup of both $G_1$ and $G_2$, and the amalgamated free pro-$p$ product
 \[
G=  G_1\amalg_{G_\Delta}G_2
 \]
is proper.
\end{lem}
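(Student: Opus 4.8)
The plan is to build the amalgam explicitly from the presentations and verify properness by exhibiting a common subgroup and a retraction-type argument, in the spirit of \cite[Prop.~5.22]{QSV}. First I would observe that by Proposition~\ref{prop:special clique inclusion}, since $\Delta$ is a clique of both $\Gamma_1$ and $\Gamma_2$, the inclusions of vertex sets $\calV_\Delta\hookrightarrow\calV_1$ and $\calV_\Delta\hookrightarrow\calV_2$ induce monomorphisms $G_\Delta\hookrightarrow G_1$ and $G_\Delta\hookrightarrow G_2$; this already gives the first assertion and identifies $G_\Delta$ with its image in each $G_i$. Note the hypothesis that the vertices of $\Delta$ are ordinary guarantees that $G_\Delta$ is a free abelian pro-$p$ group of rank $|\calV_\Delta|$, and that the relators $r_{v,w}$ of $G_i$ that involve only vertices of $\calV_\Delta$ are exactly the commutator relators defining $G_\Delta$.

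Next I would form the push-out $G=G_1\amalg_{G_\Delta}G_2$ abstractly and write down its pro-$p$ presentation: the generators are $\calV_1\cup\calV_2$ (with $\calV_1\cap\calV_2=\calV_\Delta$ identified), and the relators are $R_{\calE_1,q}\cup R_{\calE_2,q}$, i.e.\ the relators of $\Gamma_1$ together with those of $\Gamma_2$ (the shared relators coming from $\Delta$ being counted once). The key point is that this is \emph{not} in general the oriented pro-$p$ RAAG of the patched digraph $\Gamma_1\cup\Gamma_2$, because the patched digraph may carry additional edges between $\calV_1\smallsetminus\calV_\Delta$ and $\calV_2\smallsetminus\calV_\Delta$; here, however, patching along the clique $\Delta$ in the special-clique setting contributes no such edges, so $G$ is indeed the oriented pro-$p$ RAAG associated to the digraph $\Gamma$ with $\calV=\calV_1\cup\calV_2$, $\calE=\calE_1\cup\calE_2$. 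I would record this, as it is what the iterated-patching description of special-clique digraphs in \S~\ref{ssec:patching} ultimately needs.

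To prove properness, i.e.\ that $\psi_1\colon G_1\to G$ and $\psi_2\colon G_2\to G$ are injective, I would follow the standard strategy: it suffices (cf.\ \cite[\S~9.2]{ribzal}) to produce a pro-$p$ group $\widehat G$ with monomorphisms $G_1\hookrightarrow\widehat G$ and $G_2\hookrightarrow\widehat G$ agreeing on $G_\Delta$, equivalently to realize $G$ as a subgroup of an iterated (amalgamated or ordinary) free pro-$p$ product whose factors are known to embed. Concretely, I would decompose each $G_i$ compatibly with $\Delta$: writing $\Gamma_i$ as built by patching the maximal unoriented subdigraph and stars of special vertices along cliques (the structure recalled just before Example~\ref{exam:special-clique}), one reduces by induction on the number of special vertices to the case where $G_1,G_2$ are either abelian (cliques) or of the form $\Z_p^d\rtimes\Z_p$ (stars of a single special vertex, as in Example~\ref{ex:complete RAAGs}), glued along a common free abelian subgroup $G_\Delta$. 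In those base cases properness is checked directly: the amalgam of free abelian pro-$p$ groups along a free abelian subgroup is proper (it embeds in the free abelian pro-$p$ group on the union of bases after accounting for the $\rtimes$), and one can use the orientation homomorphisms $\theta_i$ of $G_1,G_2$ — which restrict trivially on $G_\Delta$ since $\Delta$ is ordinary — to glue them into an orientation $\theta$ of $G$ and thereby split off the ``$\Z_p$ acting'' part, leaving an amalgam of free abelian pro-$p$ groups.

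The main obstacle I anticipate is precisely this properness step: amalgamated free pro-$p$ products are notoriously not automatically proper, so one cannot invoke a soft argument. The cleanest route is probably not the inductive reduction sketched above but rather to cite \cite[Prop.~5.22]{QSV} verbatim: that proposition establishes exactly properness of $G_1\amalg_{G_\Delta}G_2$ for generalized $p$-RAAGs glued along an ordinary clique, and by Remark~\ref{rem:generalized} oriented pro-$p$ RAAGs form a subfamily of generalized $p$-RAAGs. So the honest plan is: (1)~invoke Proposition~\ref{prop:special clique inclusion} for the subgroup statement; (2)~identify $G$ with the generalized $p$-RAAG obtained by patching the corresponding $p$-labeled oriented graphs along $\Delta$, checking the labels match up (the ordinariness of $\Delta$ makes the labels on the shared edges trivial on both sides, so there is no clash); (3)~apply \cite[Prop.~5.22]{QSV} to conclude properness. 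The only real work is step~(2), verifying that the oriented-pro-$p$-RAAG structure is carried faithfully into the generalized-$p$-RAAG framework of \cite{QSV} under patching — a bookkeeping check on relators and $p$-labels, with the ordinariness hypothesis on $\Delta$ doing the essential work.
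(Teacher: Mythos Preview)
Your final plan --- Proposition~\ref{prop:special clique inclusion} for the subgroup claim, then \cite[Prop.~5.22]{QSV} (via Remark~\ref{rem:generalized}) for properness --- is correct, and is indeed what the lemma's own preamble invites. The paper, however, gives a self-contained proof rather than citing QSV: for each $i$ it builds a surjection $\pi\colon G_i\to \bar G=A\rtimes X$, with $A$ free abelian on the ordinary vertices of $\Gamma_i$ and $X$ either trivial or a copy of $\Z_p$ acting by $y\mapsto y^{1+q}$ (every special vertex is sent to the generator of $X$); since $\pi|_{G_\Delta}$ is an isomorphism onto the summand $A_\Delta\subseteq A$ and $\bar G^{p^n}=A^{p^n}\rtimes X^{p^n}$, one reads off $G_\Delta^{p^n}=G_\Delta\cap G_i^{p^n}$ for all $n\geq1$, and then \cite[Thm.~9.2.4]{ribzal} yields properness. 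So you invoke QSV as a black box while the paper re-runs (a streamlined specialization of) that argument --- the conclusions coincide, but the paper's route exposes the mechanism and keeps the lemma independent of \cite{QSV}. Two side remarks: your second paragraph is confused (patching by definition gives $\calE=\calE_1\cup\calE_2$, so there are \emph{never} extra edges between $\calV_1\smallsetminus\calV_\Delta$ and $\calV_2\smallsetminus\calV_\Delta$), and identifying the amalgam $G$ with the oriented pro-$p$ RAAG of the patched digraph is not part of this lemma at all --- the paper establishes that separately, in the proof of Theorem~\ref{thm:quadratic in text}, via a five-term-exact-sequence argument. Your inductive sketch before settling on the QSV citation is too loose to stand on its own; you were right to discard it.
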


\begin{proof}
 Since $\Delta$ is a complete undigraph, $G_\Delta$ is the free abelian pro-$p$ group generated by the set of vertices $\calV_\Delta$.
 Moreover, $G_\Delta$ is a subgroup of both $G_1,G_2$ by Proposition~\ref{prop:special clique inclusion}.

Now fix $i=1$ or $i=2$, and let $\calV_0$ be the set of ordinary vertices of $\Gamma_i$ --- thus $\calV_0\supseteq\calV_\Delta$ ---, and let $\mathcal{Y}$ be the set $\calY=\{y_u\mid u\in\calV_0\}$.
Moreover, let $A$ be the free abelian pro-$p$ group generated by $\calY$, and let $\bar G$ be the pro-$p$ group
\[
 \bar G=A\rtimes X=
 \left\langle\:\calY,x^\epsilon\:\mid
 \:[y_{u},y_{u'}]=1,[x,y_u]=y_u^{\epsilon q}\:\forall\:u,u'\in\calV_0\:\right\rangle,
\]
where $X$ is either trivial --- and hence $\epsilon=0$ --- if $\Gamma_i$ has no special vertices; or $X$ is the pro-$p$-cyclic group generated by an element $x$ such that $xyx^{-1}=y^{1+q}$ for every $y\in A$ --- and hence $\epsilon=1$ --- if $\Gamma_i$ has at least a special vertex.
By construction, one has
\begin{equation}\label{eq:barG pn}
\bar G^{p^n}=A^{p^n}\rtimes X^{p^n}.
\end{equation}
(Observe that $\bar G$ is isomorphic to the oriented pro-$p$ RAAG associated to a complete special digraph whose ordinary vertices are $\calY$, and whose only special vertex --- if $X$ is not trivial --- is $x$, cf. Example~\ref{ex:complete RAAGs}.)
Now, the assignment
\[
 u\longmapsto y_u\qquad\text{and}\qquad w\longmapsto x
\]
for every $u\in\calV_0$ and for every special vertex $w\in\calV$ --- if there is any, and thus if $X\neq\{1\}$ --- induces a homomorphism of pro-$p$ groups $\pi\colon G_i\to \bar G$, as
\[
 \pi([u,u'])=[y_u,y_{u'}]=1\qquad \text{and}\qquad \pi([w,u])=[x,y_u]=y_u^q=\pi(u^q)
\]
for every $u,u'\in\calV_0$, and for every special vertex $w\in\calV$ (recall that there are no relations between the special vertices of $\Gamma_i$, as they are disjoint, for $\Gamma$ is special).
Moreover, $\pi$ is clearly surjective.
On the other hand, $\pi(G_\Delta)=A_\Delta$, where $A_\Delta\subseteq A$ is the free abelian pro-$p$ group generated by $\{y_u\mid u\in\calV_\Delta\}$.
Thus, the restriction $\pi\vert_{G_\Delta}\colon G_\Delta\to A_\Delta$ is an isomorphism of (abelian) pro-$p$ groups.

We claim that
\begin{equation}\label{eq:inclusion pn}
  G_\Delta^{p^n}=G_\Delta\cap G_i^{p^n}\qquad\text{for every }n\geq1.
\end{equation}
The inclusion $G_\Delta^{p^n}\subseteq G_\Delta\cap G_i^{p^n}$ is obvious.
Conversely, let $z$ be an element of $G_{\Delta}$ such that $z\in G_i^{p^n}$.
Then $\pi(z)\in\bar G^{p^n}$, and since $\pi(z)\in A_\Delta\subseteq A$, by \eqref{eq:barG pn} one has $\pi(z)\in A_\Delta^{p^n}$.
Finally, $$z=\pi^{-1}(\pi(z))\in G_{\Delta}^{p^n},$$ as $\pi\vert_{G_\Delta}\colon G_\Delta\to A_\Delta$ is an isomorphism of abelian pro-$p$ groups.

Since $i$ was arbitrarily chosen, \eqref{eq:inclusion pn} holds for both $i=1,2$.
Then \cite[Thm.~9.2.4]{ribzal} implies that the amalgamated free pro-$p$ product is proper.
\end{proof}

It may happen that an oriented, but not special, graph decomposes as patching of two induced subdigraphs, but the associated oriented pro-$p$ RAAG is not the amalgamated free pro-$p$ product of the two oriented pro-$p$ RAAGs associated to the two induced subdigraphs, as shown by the following.

\begin{exam}\rm
 Let $\tilde\Gamma=(\tilde\calV,\tilde\calE)$ be the digraph with geometric representation \eqref{eq:exagraph intro} --- namely,
\[
  \xymatrix@R=1.5pt{&v_1& \\&\bullet& \\  \\  \\
 \bullet\ar[uuur] &\circ\ar[l]\ar@{-}[r]\ar[uuu]& \circ\ar[uuul] \\ v_2&v_3&v_4}
\]
---, and let $\Gamma=(\calV,\calE)$, $\Gamma'=(\calV',\calE')$ and $\Delta=(\calV_\Delta,\calE_\Delta)$ be the induced subdigraphs of $\tilde\Gamma$ with vertices respectively
\[
 \calV=\{v_1,v_2,v_3\},\qquad \calV'=\{v_1,v_3,v_4\},\qquad\calV_\Delta=\{v_2,v_3\}.
\]
Hence $\Gamma$ is as in Example~\ref{ex:torsion}, and $\tilde \Gamma$ is the patching of $\Gamma$ and $\Gamma'$ along $\Delta$.

\noindent
For $q=p$ let $\tilde G$, $G$, $H$ and $G_\Delta$ be the oriented pro-$p$ RAAGs associated respectively to $\tilde\Gamma$, $\Gamma$, $\Gamma'$ and $\Delta$.
Hence $G$ is as in Example~\ref{ex:torsion}, while $H\simeq\Z_p^2\rtimes \Z_p$ and $G_\Delta\simeq\Z_p\rtimes\Z_p$ (cf. Example~\ref{ex:complete RAAGs}).
Then $\tilde G$ is not an amalgamated free pro-$p$ product of $G$ and $H$: the subgroup of $H$ generated by $v_1,v_3$ is $G_\Delta$ by Proposition~\ref{prop:special clique inclusion}, and it is torsion-free, while the subgroup of $G$ generated by $v_1,v_3$ has non-trivial torsion by Example~\ref{ex:torsion}.
\end{exam}



\section{The $\F_p$-cohomology of oriented pro-$p$ RAAGs}
\label{sec:H}


\subsection{$\F_p$-cohomology of pro-$p$ groups in a nutshell}\label{ssec:cohom}

Throughout this subsection, $G$ will denote a pro-$p$ group, and the finite field $\F_p$ will be considered as a trivial $G$-module.
For $n\geq0$, we will denote the $n$th $\F_p$-cohomology group $\rmH^n(G,\F_p)$ simply by $\rmH^n(G)$.
For the reader's convenience, in this subsection (which may be safely skipped by who is accustomed with group cohomology) we recall briefly some basic facts on the cohomology of $G$ with coefficients in $\F_p$ --- for a more detailed account see \cite[Ch.~I, \S\S~2--4]{serre:galc} or \cite[Ch.~I]{nsw:cohn}, to which we make reference.

\subsubsection{Degree 0 and 1}
The 0th $\F_p$-cohomology group $\rmH^0(G)$ is just $\F_p$, while one has an equality of discrete $\F_p$-vector spaces
\begin{equation}\label{eq:H1}
 \rmH^1(G)=\mathrm{Hom}(G,\F_p),
\end{equation}
where the former is the group of homomorphisms of pro-$p$ groups $G\to\F_p$, where the latter is considered as a cyclic group of order $p$, cf. \cite[Ch.~I, \S~4.2]{serre:galc}.

If $G$ is finitely generated, then \eqref{eq:H1} induces an isomorphism of finite $\F_p$-vector spaces $$\rmH^1(G)\simeq(G/\Phi(G))^\ast,$$ and $\textvisiblespace^\ast$ denotes the $\F_p$-dual.
In particular, the dimension of $\rmH^1(G)$ as a $\F_p$-vector space is equal to the minimum number of generators of $G$ (cf. \cite[p.~31]{serre:galc}).

\subsubsection{Higher degrees}
For $n\geq1$, the $n$th $\F_p$-cohomology group $\rmH^n(G)$ is a subquotient of the discrete $\F_p$-vector space
\[
 C^n(G)=\left\{\:f\colon \underbrace{G\times \ldots\times G}_{n\text{times}}\to\F_p\:\mid\: f\text{is a continuous map}\:\right\},
\]
where the Cartesian product $G\times\ldots\times G$ has the product topology, and $\F_p$ is a discrete space (cf. \cite[Ch.~I, \S~2.2]{serre:galc}).
(If $n=0$, $\rmH^0(G)=\F_p$ may be seen as the space of constant functions.)

\subsubsection{Cup-product}
Pick two elements $\alpha_1\in\rmH^{n_1}(G)$ and $\alpha_2\in\rmH^{n_2}(G)$, for some $n_1,n_2\geq0$, and let $f_1\in C^{n_1}(G)$ and $f_2\in C^{n_2}(G)$ be representatives respectively of $\alpha_1$ and $\alpha_2$.
Then the continuous map $f_1\cdot f_2\in C^{n_1+n_2}(G)$, defined pointwise via the product of $\F_p$ as a field, belongs to some element of $\rmH^{n_1+n_2}(G)$, which is called the cup-product of $\alpha_1$ and $\alpha_2$, and it is denoted by $\alpha_1\smallsmile\alpha_2$ (cf. \cite[Ch.~I, \S~4]{nsw:cohn}).
Moreover, one has
\[
 \alpha_2\smallsmile\alpha_1=(-1)^{n_1n_2}\alpha_1\smallsmile\alpha_2
\]
(cf. \cite[Prop.~1.4.4]{nsw:cohn}).
Altogether, the space $\bfH^\bullet(G)=\coprod_{n\geq0}\rmH^n(G)$ endowed with the cup-product, is a graded, and graded-commutative, $\F_p$-algebra.

\subsubsection{Functoriality}\label{sssec:funct}
Let $\psi\colon G_1\to G_2$ be a homomorphism of pro-$p$ groups.
In cohomology, $\psi$ induces a homomorphism $\psi^n\colon\rmH^2(G_2)\to\rmH^n(G_1)$ for every $n\geq1$, where $$\psi^1(\alpha)=\alpha\circ\psi\colon G_1\longrightarrow\F_p$$ for every $\alpha\in\rmH^1(G_2)$ (cf. \cite[Ch.~I, \S~2.4]{serre:galc}).
If $\psi$ is surjective, then one has an exact sequence
\begin{equation}\label{eq:5tes}
   \begin{tikzpicture}[descr/.style={fill=white,inner sep=2pt}]
        \matrix (m) [
            matrix of math nodes,
            row sep=3em,
            column sep=3em,
            text height=1.5ex, text depth=0.25ex
        ]
        { 0 &  \rmH^1(G_2)& \rmH^1(G_1) & \rmH^1(\Ker(\psi))^{G_1}\\
            & \rmH^2(G_2) &  \rmH^2(G_1) &   \\
           };

        \path[overlay,->, font=\scriptsize,>=latex]
        (m-1-1) edge node[auto] {} (m-1-2)
        (m-1-2) edge node[auto] {$\psi^1$} (m-1-3)
        (m-1-3) edge node[auto] {} (m-1-4)
        (m-1-4) edge [out=355,in=175] node[auto] {} (m-2-2)
        (m-2-2) edge node[auto] {$\psi^2$} (m-2-3);
\end{tikzpicture}
\end{equation}
where $\rmH^1(\Ker(\psi))^{G_1}$ denotes the subspace of $\rmH^1(\Ker(\psi))$ of the elements fixed by the action given by
$g.\alpha(h)=\alpha(g^{-1}hg)$ for every $g\in G_1$, $h\in\Ker(\psi)$ and $\alpha\in\rmH^1(\Ker(\psi))$ (cf. \cite[Ch.~I, \S~2.6--(b)]{serre:galc}).

On the other hand, if $H=G_1$ is a subgroup of $G=G_2$, and $\psi\colon H\to G$ is the inclusion, for every $n\geq1$ the map $\psi^n$ is called the restriction (of degree $n$), and denoted by $\res_{G,H}^n$ (cf. \cite[p.~12]{serre:galc}).
If $\alpha\in\rmH^1(G)=\mathrm{Hom}(G,\F_p)$, we write simply $\res_{G,H}^1(\alpha)=\alpha\vert_H$.
For every $\alpha_1\in\rmH^{n_1}(G)$ and $\alpha_2\in\rmH^{n_2}(G)$, with $n_1,n_2\geq0$ and $n=n_1+n_2$, one has
\begin{equation}\label{eq:res cup}
 \res_{G,H}^{n}(\alpha_1\smallsmile\alpha_2)=\res_{G,H}^{n_2}(\alpha_1)\smallsmile\res_{G,H}^{n_2}(\alpha_2)
\end{equation}
(cf. \cite[p.~15]{serre:galc}).

\subsubsection{Degree 2}\label{sssec:H2}
Let $G=F/R$ be a minimal presentation of $G$, and suppose that $G$ (and thus also $F$) is finitely generated, and $R$ is a finitely generated as a normal subgroup of $F$.
By \eqref{eq:H1} the canonical projection $\pi\colon F\to G$ yields an isomorphism in cohomology $\pi^1\colon\rmH^1(G)\simeq \rmH^1(F)$; on the other hand $\rmH^2(G)$ is trivial as $F$ is free (cf. \cite[Ch.~I, \S~3.4, Cor.~p.~23]{serre:galc}).
Finally, the vector space $\rmH^1(R)^F$ is isomorphic to $(R/R^p[R,F])^\ast$.
Altogether, \eqref{eq:5tes} applied to $\psi=\pi$ implies that $\pi^2$ is an isomorphism, too, and thus one has an isomorphism
\begin{equation}\label{eq:H2}
{\mathrm{trg}\colon \left(\dfrac{R}{R^p[R,F]}\right)^\ast\overset{\sim}{\longrightarrow} \rmH^2(G) },
\end{equation}
called transgression (cf. \cite[Ch.~I, \S~4.3]{serre:galc}).
In particular, the dimension of $\rmH^2(G)$ as a $\F_p$-vector space is equal to the minimal number of generators of $R$ as a normal subgroup of $F$.



\subsection{$\F_p$-cohomology of oriented pro-$p$ RAAGs}\label{ssec:cohom RAAGS}

Let $G$ be the oriented pro-$p$ RAAG associated to a digraph $\Gamma=(\calV,\calE)$ and to $q$.
 Since $G$ is minimally generated by $\calV$ (cf. Remark~\ref{rem:minimal presentation}), for every $v\in \calV$ the map $v^\ast$ induces a homomorphism of pro-$p$ groups $G\to\F_p$, which we will denote (with a slight abuse of notation) by $v^\ast$ as well.
 Then by \eqref{eq:H1} one has
 \begin{equation}\label{eq:H1Lam1}
\rmH^1(G)=\Lambda_1(\Gamma^\ast)=V^\ast.
 \end{equation}
This equality extends to an isomorphism
\begin{equation}\label{eq:H2Lam2}
 \lambda_2 \colon \Lambda_2(\Gamma^\ast)\overset{\sim}\longrightarrow \rmH^2(G),
\end{equation}
where $\lambda_2(v^\ast\wedge w^\ast)=v^\ast\smallsmile w^{\ast}$ (cf. \cite[Lemma~5.8]{QSV}) --- observe that
$$\dim\left(\rmH^2(G)\right)=\mathrm{card}(|\calE|)=\dim\left(\Lambda_2(\Gamma^\ast)\right).$$
In particular, if one considers the minimal presentation $G=F_{\calV}/R_{\calE,q}$, then the $\F_p$-vector space ${R_{\calE,q}}/N$ --- where we set $N=(R_{\calE,q})^p[R_{\calE_q},F_\calV]$ --- has a basis
$\{r_{v,w}N\:\mid\:\{v,w\}\in|\calE|\}$, and the isomorphism $\lambda_2$ and the transgression \eqref{eq:H2} are described by
\[
\xymatrix@C=60pt{v^\ast\wedge w^\ast \ar@{|->}@/^1.5pc/[r]^-{\lambda_2} & v^\ast\smallsmile w^\ast  \ar@{|->}@/^1.5pc/[r]^-{\mathrm{trg}^{-1}}  \ar@{|->}@/^1.5pc/[l]^-{\lambda_2^{-1}}  &
\pm\left(r_{v,w} N\right)^\ast \ar@{|->}@/^1.5pc/[l]^-{\mathrm{trg}} } \]
for every $\{v,w\}\in|\calE|$ (the sign of the right-side term is minus if $(v,w)$ is special, as in this case the commutator showing up in $r_{v,w}$ is $[w,v]=[v,w]^{-1}$).

\begin{rem}\label{rem:res lambda}\rm
Let $\Gamma=(\calV,\calE)$ be a digraph, and let $G$ be the oriented pro-$p$ RAAG associated to $\Gamma$ and to $q$.
Suppose that $\Gamma$ has an induced subdigraph $\Gamma'=(\calV',\calE')$ such that the oriented pro-$p$ RAAG $H$ associated to $\Gamma'$ and to $q$ is a subgroup of $G$ via the inclusion $\calV'\hookrightarrow\calV$ (e.g., $\Gamma$ is special and $\Gamma'$ is a clique, cf. Proposition~\ref{prop:special clique inclusion}).
Then the map $\res_{G,H}^1\colon \rmH^1(G)\to\rmH^1(H)$ is given by
\[\res_{G,H}^1(v^\ast)=\begin{cases} v^\ast\vert_H & \text{if }v\in\calV',\\0 & \text{if }v\notin\calV';
\end{cases} \]
while the map $\res_{G,H}^2\colon \rmH^2(G)\to\rmH^2(H)$ is given by
\[ \res_{G,H}^2(v^\ast\smallsmile w^\ast)=
 \begin{cases} (v^\ast\vert_H)\smallsmile(w^\ast\vert_H) & \text{if }v,w\in\calV',\\
0 & \text{if }\{v,w\}\notin \calE',
 \end{cases}\]
and its kernel is
\[\begin{split}
   \Ker(\res_{G,H}^2) &=\mathrm{Span}_{\F_p}\left(\:v^\ast\smallsmile w^\ast\:\mid\: \{v,w\}\notin\calE'\:\right)\\
   &=  \rmH^1(G)\smallsmile\Ker(\res_{G,H}^1),
  \end{split}\]
  where the latter is the subspace of $\rmH^2(G)$ generated by the cup-products of elements of the first factor with elements of the second factor.
\end{rem}

By the result of K.~Lorensen \cite[Thm.~2.6]{lorensen}, if $\Gamma$ is an undigraph and $G$ is the associated pro-$p$ RAAG, then \eqref{eq:H1Lam1}--\eqref{eq:H2Lam2} extend to an isomorphism of graded $\F_p$-algebras $\bfLam_\bullet(\Gamma^\ast)\simeq\bfH^\bullet(G)$.
In general, this is not the case for oriented pro-$p$ RAAGs associated to digraphs.

\begin{exam}\rm
It is well-known that if a pro-$p$ group $G$ has non-trivial torsion, then $\rmH^n(G)\neq0$ for every $n\geq0$.
Therefore, if $\Gamma$ is a digraph as in Example~\ref{ex:mennike}, or as in Example~\ref{ex:torsion}, and $G$ is the associated oriented pro-$p$ RAAG, for $q=p^f$, then $\bfLam_\bullet(\Gamma^\ast)\not\simeq\bfH^\bullet(G)$, as $\Lambda_n(\Gamma^\ast)=0$ for $n\geq4$.
\end{exam}

One knows that a digraph $\Gamma=(\calV,\calE)$ yields oriented pro-$p$ RAAGs whose $\F_p$-cohomology algebra is isomorphic to the associated exterior Stanley-Reisner $\F_p$-algebra in the following cases:
\begin{itemize}
 \item[(a)] if $\Gamma$ is an undigraph, as proved by K.~Lorensen;
 \item[(b)] if $\Gamma$ is triangle-free (cf. \cite[Thm.~F]{QSV});
 \item[(c)] if $\Gamma$ is special and chordal (cf. \cite[Thm.~H]{QSV} and \cite[Thm.~1.3--(iii)]{BQW});
 \item[(d)] if $\Gamma$ is obtained by mirroring a digraph yielding an oriented pro-$p$ RAAG (cf. \cite[Rem.~5.25--(b)]{QSV}).
\end{itemize}

\begin{rem}\label{rem:Lam quadratic}\rm
Let $G$ be the oriented pro-$p$ RAAG associated to a digraph $\Gamma=(\calV,\calE)$ and to $q$.
If the $\F_p$-cohomology algebra $\bfH^\bullet(G)$ is a quadratic $\F_p$-algebra (cf. \S~\ref{ssec:stanreis}), then it is necessarily isomorphic to $\bfLam_\bullet(\Gamma^\ast)$, as the latter is {the} quadratic $\F_p$-algebra generated by $\Lambda_1(\Gamma^\ast)=\rmH^1(G)$, and with space of degree 2 $\Lambda_2(\Gamma^\ast)\simeq\rmH^2(G)$ (cf. \cite[Thm.~E]{QSV}).
\end{rem}


\subsection{Cohomology and special-clique digraphs}\label{ssec:H specialclique}

We are ready to prove Theorem~\ref{thm:quad intro}.
We follow the strategy used to prove \cite[Thm.~H]{QSV} (see also \cite[Rem.~2.25--(c)]{QSV}).

\begin{thm}\label{thm:quadratic in text}
 Let $\Gamma=(\calV,\calE)$ be a special-clique special digraph, and for $q=p^f$ let $G$ be the associated oriented pro-$p$ RAAG.
 Then $\bfH^\bullet(G)\simeq\bfLam_\bullet(\Gamma^\ast)$.
\end{thm}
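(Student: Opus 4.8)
The strategy is to realize $\bfH^\bullet(G)$ as a quadratic $\F_p$-algebra, since by Remark~\ref{rem:Lam quadratic} this automatically forces $\bfH^\bullet(G)\simeq\bfLam_\bullet(\Gamma^\ast)$. Concretely, I will use the decomposition of a special-clique digraph $\Gamma$ as an iterated patching of the maximal ordinary induced subdigraph $\Gamma_0$ with the stars $\mathrm{St}(v)$ of the special vertices $v$, glued along the cliques $\Delta_v$ with vertex set $\calV_v\cap\calV_0$ (as described at the end of \S~\ref{ssec:patching} and illustrated in Example~\ref{exam:special-clique}). By Lemma~\ref{prop:amalg} each patching step corresponds to a proper amalgamated free pro-$p$ product $G_1\amalg_{G_\Delta}G_2$. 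So the plan is to induct on the number of special vertices: the base case $\Gamma=\Gamma_0$ is an undigraph, where $\bfH^\bullet(G_0)\simeq\bfLam_\bullet(\Gamma_0^\ast)$ by Lorensen's theorem \cite[Thm.~2.6]{lorensen}; and each star $\mathrm{St}(v)$ is a complete special digraph, whose oriented pro-$p$ RAAG is $\Z_p^d\rtimes\Z_p$ (Example~\ref{ex:complete RAAGs}), a group whose $\F_p$-cohomology is well understood and is the exterior Stanley-Reisner algebra of a complete graph (it is a Poincaré duality group, or one computes directly via the Lyndon--Hochschild--Serre spectral sequence for $\Z_p^d\rtimes\Z_p$).

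The inductive step requires a Mayer--Vietoris-type argument for $\F_p$-cohomology of proper amalgamated free pro-$p$ products. For a proper amalgam $G=G_1\amalg_H G_2$ one has the long exact sequence
\[
\cdots\to\rmH^{n-1}(H)\to\rmH^n(G)\to\rmH^n(G_1)\oplus\rmH^n(G_2)\to\rmH^n(H)\to\cdots
\]
(cf.\ \cite[\S~9.2]{ribzal} or the general machinery for amalgams of profinite groups). The key inputs to make this collapse into an algebra isomorphism are: (i) the restriction maps $\rmH^\bullet(G_i)\to\rmH^\bullet(H)$ are surjective — here $H=G_\Delta$ is free abelian on an ordinary clique $\Delta$, and its cohomology is generated in degree $1$ by the classes $u^\ast\vert_H$ for $u\in\calV_\Delta$, which clearly lift to $G_i$ by Remark~\ref{rem:res lambda}; (ii) consequently the connecting maps $\rmH^{n-1}(H)\to\rmH^n(G)$ vanish for $n\geq1$, so the sequence breaks into short exact sequences $0\to\rmH^n(G)\to\rmH^n(G_1)\oplus\rmH^n(G_2)\to\rmH^n(H)\to0$. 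On the combinatorial side, one checks that the analogous sequence $0\to\bfLam_\bullet(\Gamma^\ast)\to\bfLam_\bullet(\Gamma_1^\ast)\oplus\bfLam_\bullet(\Gamma_2^\ast)\to\bfLam_\bullet(\Delta^\ast)\to0$ holds for Stanley--Reisner algebras when $\Gamma$ is the patching of $\Gamma_1,\Gamma_2$ along the clique $\Delta$ — this is the purely combinatorial statement that a clique of $\Gamma$ lies entirely in $\Gamma_1$ or in $\Gamma_2$ (which uses that $\Delta=\Gamma_1\cap\Gamma_2$ and that, since $\Gamma$ is special-clique, any clique containing a special vertex $v$ lies inside $\mathrm{St}(v)$). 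A diagram chase, compatible with cup products via \eqref{eq:res cup}, then upgrades the degreewise isomorphism $\bfH^\bullet(G)\simeq\bfLam_\bullet(\Gamma^\ast)$ to an isomorphism of graded $\F_p$-algebras; alternatively, one invokes Remark~\ref{rem:Lam quadratic} once one knows $\bfH^\bullet(G)$ is quadratic, which follows since both of $\bfH^\bullet(G_1),\bfH^\bullet(G_2)$ and $\bfH^\bullet(H)$ are quadratic and quadraticity is preserved under this kind of fibre-product-of-algebras construction.

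The main obstacle I anticipate is verifying that the cup-product structure — not merely the graded vector space structure — is correctly captured, i.e.\ that the Mayer--Vietoris isomorphism $\rmH^n(G)\cong\{(\beta_1,\beta_2):\res(\beta_1)=\res(\beta_2)\}$ is an algebra map and matches the fibre product $\bfLam_\bullet(\Gamma_1^\ast)\times_{\bfLam_\bullet(\Delta^\ast)}\bfLam_\bullet(\Gamma_2^\ast)$. This is where one must be careful: a class $v^\ast\smallsmile w^\ast$ with $v\in\calV_1\smallsetminus\calV_2$ and $w\in\calV_2\smallsetminus\calV_1$ must be shown to be zero in $\rmH^2(G)$ (mirroring $v^\ast\wedge w^\ast=0$ in $\bfLam_\bullet(\Gamma^\ast)$ because $\{v,w\}\notin|\calE|$), and more generally products of classes supported on different sides must vanish in the right degrees. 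I would handle this by working with the explicit presentation $G=F_\calV/R_{\calE,q}$ and the transgression description of $\rmH^2$ from \S~\ref{sssec:H2}, together with the fact (from the iterated patching) that every relation $r_{v,w}$ already appears in one of the $G_i$, so no new degree-$2$ classes are created beyond those prescribed by the cliques of $\Gamma$; the special-clique hypothesis is exactly what guarantees the gluing cliques $\Delta_v$ are ordinary (hence $G_\Delta$ is free abelian, making Lemma~\ref{prop:amalg} applicable) and that the stars are cliques (hence their cohomology is the "expected" exterior algebra).
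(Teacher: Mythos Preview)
Your approach is essentially the paper's: induct on the number of special vertices, use Lorensen for the undigraph base case, and at each step split off the star of a special vertex via Lemma~\ref{prop:amalg}. The paper packages your Mayer--Vietoris argument as a black-box invocation of \cite[Thm.~B]{QSV}, which yields quadraticity of $\bfH^\bullet$ for a proper amalgam precisely under the two hypotheses you identify (surjectivity of $\res^1$ and $\Ker(\res^2)=\Ker(\res^1)\smallsmile\rmH^1$, both of which hold by Remark~\ref{rem:res lambda}); so your direct argument and the paper's citation amount to the same thing.

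There is one step you gloss over that the paper handles explicitly. You assert that the patching of $\Gamma$ ``corresponds'' to the amalgam, i.e.\ that $G\simeq G_w\amalg_{G_\Delta}G_{\Gamma'}=:\tilde G$, but this is not automatic for pro-$p$ groups. One knows $G_w$ and $G_\Delta$ embed in $G$ by Proposition~\ref{prop:special clique inclusion} (they come from cliques), but $\Gamma'$ is not a clique, so the natural map $G_{\Gamma'}\to G$ is not a priori injective; the universal property only produces a homomorphism $\psi\colon\tilde G\to G$. The paper proves $\psi$ is an isomorphism \emph{after} having computed $\bfH^\bullet(\tilde G)\simeq\bfLam_\bullet(\Gamma^\ast)$: then $\psi^1$ and $\psi^2$ are isomorphisms, so the five-term sequence \eqref{eq:5tes} gives $\rmH^1(\Ker\psi)^{\tilde G}=0$, whence $\Ker\psi=\Ker(\psi)^p[\Ker\psi,\tilde G]$ and thus $\Ker\psi=1$. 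You should insert this argument --- or, alternatively, observe directly that $G$ satisfies the same universal property as $\tilde G$, since $\calV=\calV_w\cup\calV'$ and every defining relation $r_{v,w}$ of $G$ already lies in $R_{\calE_w,q}$ or in $R_{\calE',q}$.
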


\begin{proof}
We proceed by induction on the number of special vertices of $\Gamma$.
If $\Gamma$ has no special vertices, then $\Gamma$ is an undigraph, and thus $\bfH^\bullet(G)\simeq\bfLam_\bullet(\Gamma^\ast)$ (cf. \S~\ref{ssec:cohom RAAGS}).

Now suppose that $\Gamma$ has $n$ special vertices, $n\geq1$, and that the statement holds for every special-clique special digraph with at most $n-1$ special vertices.

Let $w\in\calV$ be a special vertex, and put
$$\mathrm{St}(w)=(\calV_w,\calE_w),\qquad \Delta=(\calV_\Delta,\calE_\Delta),\qquad \Gamma'=(\calV',\calE'),$$
where $\Delta$ and $\Gamma'$ are the induced subdigraphs of $\Gamma$ whose vertices are respectively $\calV_\Delta=\calV_w\smallsetminus\{w\}$ and $\calV'=\calV\smallsetminus\{w\}$.
Then $\mathrm{St}(s)$ and $\Delta$ are cliques of $\Gamma$, while $\Gamma'$ is a special-clique special digraph with $n-1$ special vertices. Moreover, $\Gamma$ is the patching of $\mathrm{St}(w)$ and $\Gamma'$ along $\Delta$.

By Proposition~\ref{prop:special clique inclusion}, the subgroups $G_w$ and $G_\Delta$ of $G$ generated respectively by $\calV_w$ and $\calV_\Delta$ are isomorphic to the oriented pro-$p$ RAAGs associated respectively to $\mathrm{St}(w)$ and $\Delta$.
Analogously, $G_\Delta$ --- which is the free abelian pro-$p$ group generated by $\calV_{\Delta}$ --- is a subgroup of both $G_w$ and $G_{\Gamma'}$, where $G_{\Gamma'}$ is the oriented pro-$p$ RAAG associated to $\Gamma'$.
By Lemma~\ref{prop:amalg}, the amalgamated free pro-$p$ product
\begin{equation}\label{eq:amalgam proof}
  \tilde G=G_w\amalg_{G_\Delta}G_{\Gamma'}
\end{equation}
is proper.

By induction, the three subgroups $G_w,G_{\Gamma'},G_\Delta$ have quadratic $\F_p$-cohomology, which we identify respectively with $\bfLam_\bullet(\mathrm{St}(w)^\ast)$, $\bfLam_\bullet((\Gamma')^\ast)$ and $\bfLam_\bullet(\Delta^\ast)$.
In order to apply \cite[Thm.~B]{QSV} to the proper amalgamated free pro-$p$ product \eqref{eq:amalgam proof}, we need to check that the restriction maps
\[
 \res_{G_w,G_\Delta}^1\colon \rmH^1(G_w)\to\rmH^1(G_\Delta)\qquad\text{and}\qquad
 \res_{G_{\Gamma'},G_\Delta}^1\colon \rmH^1(G_{\Gamma'})\to\rmH^1(G_\Delta)\]
are surjective; and moreover that
\[\begin{split}
 \Ker(\res_{G_w,G_\Delta}^2)&=\Ker(\res_{G_w,G_\Delta}^1)\wedge \rmH^1(G_w),\\
 \Ker(\res_{G_{\Gamma'},G_\Delta}^2)&=\Ker(\res_{G_{\Gamma'},G_\Delta}^1)\wedge \rmH^1(G_{\Gamma'})
  \end{split}\]
But these condition are satisfied by Remark~\ref{rem:res lambda}, and thus we may apply \cite[Thm.~B]{QSV}, which implies that the $\F_p$-algebra $\bfH^\bullet(\tilde G)$ is quadratic.
In particular, for every $n\geq1$ one has a short exact sequence
\[
 \xymatrix{0\ar[r]&\rmH^n(\tilde G) \ar[r]^-{f_{\tilde G}^n} &
 \Lambda_n(\mathrm{St}(w)^\ast)\oplus \Lambda_n((\Gamma')^\ast)
 \ar[r]^-{f^n_{G_{\Delta}}}  & \Lambda_n(\Delta^\ast)\ar[r]&0},
\]
where $f_{\tilde G}^n=(\res_{\tilde G,G_{w}}^n,\res_{\tilde G,G_{\Gamma'}}^n)$, and
$f^n_{G_{\Delta}}$ extends (via multiplication) to degree $n$ the map $f^1_{G_{\Delta}}(v_1^\ast,v_2^\ast)=v_1^\ast\vert_{G_\Delta}-v_2^\ast\vert_{G_\Delta}$ for $v_1\in\calV_w$ and $v_2\in\calV'$
(cf. \cite[p.~653]{QSV}, see also \cite[Prop.~9.2.13]{ribzal}).
Again by Remark~\ref{rem:res lambda}, $\Ker(f_{\tilde G}^1)$ is easily seen to be isomorphic to $V^\ast=\Lambda_1(\Gamma^\ast)$, as $\calV=\{w\}\cup\calV_\Delta\cup(\calV'\smallsetminus\calV_\Delta)$;
while kernel of $f_{\tilde G}^2$ is
\[
\left(w^\ast\wedge\Lambda_1(\mathrm{St}(w)^\ast)\right)\oplus
\left(\Lambda_2(\Delta^\ast),-\Lambda_2(\Delta^\ast)\right)\oplus
\left(\Ker(\res_{G_{\Gamma'},G_\Delta}^1)\wedge\Lambda_1((\Gamma')^\ast)\right),
\]
which is isomorphic to $\Lambda_2(\Gamma^\ast)$, as
$$|\calE|=\left\{\:\{w,u\}\:\mid\:u\in\calV_w\:\right\}\cup|\calE_\Delta|\cup
\left\{\:\{v,u\}\:\mid\:v\in\calV'\smallsetminus\calV_\Delta,u\in\calV'\:\right\}.$$
Altogether, $\bfH^\bullet(\tilde G)\simeq \bfLam_\bullet(\Gamma^\ast)$ (cf. Remark~\ref{rem:Lam quadratic}).

Finally, we claim that $\tilde G\simeq G$.
Indeed, by the universal property of push-out's, the monomorphism $G_w\to G$ and the homomorphism $G_{\Gamma'}\to G$ induced by $\calV'\hookrightarrow\calV$ yield the commutative diagram
\[\xymatrix@R=1.5pt{& G_w\ar@{^{(}->}[ddr]\ar@/^1pc/[ddrrr] & &&\\ \\
 G_\Delta\ar@{^{(}->}[ruu]\ar@{^{(}->}[rdd] && \tilde G\ar@{-->}[rr]^-{\psi} && G
 \\ \\ &G_{\Gamma'}\ar@{^{(}->}[ruu]\ar@/_1pc/[uurrr] &&& }\]
Consider the homomorphisms $\psi^n\colon \rmH^n(G)\to\rmH^n(\tilde G)$, $n\geq1$ (cf. \S~\ref{sssec:funct}).
Since $\psi^1$ and $\psi^2$ are isomorphisms, $\rmH^1(\Ker(\psi))^{\tilde G}=0$ by \eqref{eq:5tes}.
Hence
\[
0=\left(\rmH^1(\Ker(\psi))^{\tilde G}\right)^\ast\simeq \frac{\Ker(\psi)}{\Ker(\psi)^p\left[\Ker(\psi),\tilde G\right]}
\]
(here the isomorphism is an isomorphism of $\F_p$-vector spaces), and also the latter is trivial.
Therefore, also $\Ker(\psi)$ is trivial, as the above quotient gives a set of generators of $\Ker(\psi)$ as a normal subgroup of $\tilde G$.
\end{proof}

It remains an open problem to determine which digraphs yield oriented pro-$p$ RAAGs whose $\F_p$-cohomology is isomorphic to the exterior Stanley-Reisner $\F_p$-algebra associated to the digraph.
In particular, one has the following conjecture, which is the oriented version of the conjecture formulated in \cite[p.~672]{QSV}.

\begin{conj}
 Let $\Gamma=(\calV,\calE)$ be a special digraph, and let $G$ be the oriented pro-$p$ RAAG associated to $\Gamma$ and to a $p$-power $q$ {\rm(}$q\neq2$ in case $p=2${\rm)}.
 Then $$\bfH^\bullet(G)\simeq\bfLam_\bullet(\Gamma^\ast).$$
\end{conj}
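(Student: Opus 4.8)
The plan is to extend the inductive argument behind Theorem~\ref{thm:quadratic in text} to arbitrary special digraphs, again inducting on the number of special vertices of $\Gamma$ but without being able to patch along cliques. The base case of undigraphs is Lorensen's theorem (\cite[Thm.~2.6]{lorensen}). If $\Gamma$ has $n\geq1$ special vertices, fix a special vertex $w$. Since the special vertices of a special digraph are pairwise disjoint (Remark~\ref{rem:special}--(b)), every neighbour of $w$ is ordinary and $w$ has no outgoing directed edge; hence $\Delta=\mathrm{St}(w)\smallsetminus\{w\}$ is an undigraph, $\mathrm{St}(w)$ is a special digraph whose only special vertex is $w$, the digraph $\Gamma'=\Gamma\smallsetminus\{w\}$ is special with $n-1$ special vertices, and $\Gamma$ is the patching of $\mathrm{St}(w)$ and $\Gamma'$ along $\Delta$. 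Two things are then needed: (1) the ``atomic'' case in which $\Gamma$ has a single special vertex, i.e.\ $\Gamma=\mathrm{St}(w)$; and (2) the patching step for the general reduction.

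For~(1), exactly as in the proof of Lemma~\ref{prop:amalg} one identifies $G_{\mathrm{St}(w)}\simeq G_\Delta\rtimes\Z_p$, where $G_\Delta$ is the (ordinary) pro-$p$ RAAG on the undigraph $\Delta$ and a generator $w$ of $\Z_p$ acts by $u\mapsto u^{1+q}$ on the in-neighbours of $w$ and trivially on the remaining vertices of $\Delta$. Since $q=p^f$, one has $u^{1+q}\equiv u$ modulo $\Phi(G_\Delta)$, so $\Z_p$ acts trivially on $\rmH^1(G_\Delta)=\Delta^\ast$ and hence --- as $\bfH^\bullet(G_\Delta)\simeq\bfLam_\bullet(\Delta^\ast)$ is generated in degree~$1$ --- on all of $\bfH^\bullet(G_\Delta)$. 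As $\cd(\Z_p)=1$, the Lyndon--Hochschild--Serre spectral sequence of $1\to G_\Delta\to G_{\mathrm{St}(w)}\to\Z_p\to1$ has only two non-zero columns, degenerates at $E_2$, and yields short exact sequences
\[
 0\longrightarrow\rmH^{n-1}(G_\Delta)\longrightarrow\rmH^n(G_{\mathrm{St}(w)})\xrightarrow{\ \res\ }\rmH^n(G_\Delta)\longrightarrow0
\]
for every $n\geq1$. A routine analysis of this sequence (the left-hand map being cup product with $w^\ast$, after lifting along $\res$) shows that $\bfH^\bullet(G_{\mathrm{St}(w)})$ is generated in degree~$1$, with $\dim\rmH^n(G_{\mathrm{St}(w)})=\dim\Lambda_{n-1}(\Delta^\ast)+\dim\Lambda_n(\Delta^\ast)=\dim\Lambda_n(\mathrm{St}(w)^\ast)$; hence the canonical surjection $\bfLam_\bullet(\mathrm{St}(w)^\ast)\to\bfH^\bullet(G_{\mathrm{St}(w)})$ --- already an isomorphism in degrees $\leq2$ by \eqref{eq:H1Lam1}--\eqref{eq:H2Lam2} --- is an isomorphism in every degree.

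For~(2) one would repeat the proof of Theorem~\ref{thm:quadratic in text} essentially verbatim. Granting that $G_\Delta$ embeds into $G_{\Gamma'}$ via the vertex inclusion and that the amalgamated free pro-$p$ product $\tilde G=G_{\mathrm{St}(w)}\amalg_{G_\Delta}G_{\Gamma'}$ is proper, the restriction maps onto $\rmH^\bullet(G_\Delta)$ are surjective and satisfy the kernel identity of Remark~\ref{rem:res lambda}; hence \cite[Thm.~B]{QSV} applies and shows that $\bfH^\bullet(\tilde G)$ is quadratic, with a Mayer--Vietoris short exact sequence identifying it with $\bfLam_\bullet(\Gamma^\ast)$. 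Finally the canonical surjection $\psi\colon\tilde G\to G$ is an isomorphism, because $\psi^1$ and $\psi^2$ are isomorphisms, so $\rmH^1(\Ker(\psi))^{\tilde G}=0$ and therefore $\Ker(\psi)=1$.

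The two facts granted in~(2) are where the argument currently stops, and the main obstacle is the properness of the amalgam over the non-abelian RAAG $G_\Delta$. In Lemma~\ref{prop:amalg} properness was derived from the fact that a clique-RAAG is free abelian and retracts off both factors, which made $p$-th powers controllable; here $G_\Delta$ need not be a retract of $G_{\Gamma'}$, since an ordinary vertex of $\Delta$ may be the initial vertex of a directed edge pointing at another special vertex of $\Gamma'$. One therefore needs a finer criterion --- for instance that the Zassenhaus $\F_p$-filtration of $G_\Delta$ is induced from those of both $G_{\mathrm{St}(w)}$ and $G_{\Gamma'}$, so that \cite[Thm.~9.2.4]{ribzal} applies --- together with the (also missing) fact that the RAAG on the maximal undigraph of a special digraph embeds into the associated oriented pro-$p$ RAAG. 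Establishing this filtration compatibility for all special digraphs seems to require genuinely new input beyond \cite{QSV}, which is why the conjecture remains open outside the special-clique case.
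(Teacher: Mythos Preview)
The statement you are attempting to prove is presented in the paper as an \emph{open conjecture}; the paper offers no proof of it. The only case established in the paper is the special-clique one (Theorem~\ref{thm:quadratic in text}), and the cases known prior to the paper are listed just before Remark~\ref{rem:Lam quadratic}. There is therefore no ``paper's own proof'' to compare your attempt against, and your proposal is --- as you yourself make explicit in its final paragraph --- an outline of a strategy rather than a proof.

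That said, your atomic case~(1) appears to go through once the details are filled in. The identification $G_{\mathrm{St}(w)}\simeq G_\Delta\rtimes\Z_p$ is correct, the action of $\Z_p$ on $\bfH^\bullet(G_\Delta)$ is indeed trivial, and the LHS spectral sequence degenerates as you say. To promote the resulting short exact sequences to quadraticity you need one further observation: for any lift $\tilde\gamma\in\rmH^{n-1}(G_{\mathrm{St}(w)})$ of $\gamma\in\rmH^{n-1}(G_\Delta)$, the class $w^\ast\smallsmile\tilde\gamma$ lies in $\Ker(\res)$ and is independent of the lift (because $w^\ast$ cupped with anything in $E_\infty^{1,n-2}$ lands in $E_\infty^{2,n-2}=0$), and the multiplicative structure of the spectral sequence identifies $\gamma\mapsto w^\ast\smallsmile\tilde\gamma$ with the inclusion $E_\infty^{1,n-1}\hookrightarrow\rmH^n(G_{\mathrm{St}(w)})$. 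Generation in degree~$1$ then follows by induction on $n$, and the dimension count plus Remark~\ref{rem:Lam quadratic} finishes. This is a genuine (if modest) extension of the known cases: for instance $\mathrm{St}(w)$ with $\Delta$ a $4$-cycle is neither triangle-free nor chordal.

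The patching step~(2) is where the argument genuinely stalls, and you have located the obstruction precisely. Without the clique hypothesis on $\Delta$, nothing in the paper guarantees that $G_\Delta$ embeds into $G_{\Gamma'}$ via the vertex inclusion (Proposition~\ref{prop:special clique inclusion} requires $\Delta$ to be a clique), nor that the amalgam $G_{\mathrm{St}(w)}\amalg_{G_\Delta}G_{\Gamma'}$ is proper (Lemma~\ref{prop:amalg} uses the abelianness of $G_\Delta$ to retract onto it and control $p$-powers). Even the invocation of Remark~\ref{rem:res lambda} for the kernel identity presupposes the same missing embedding. Your suggestion to verify compatibility of the Zassenhaus filtrations so as to apply \cite[Thm.~9.2.4]{ribzal} is a reasonable line of attack, but it is exactly the ``genuinely new input'' you flag, and it is why the conjecture remains open.
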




\section{Massey products}
\label{sec:massey}

\subsection{Massey products in $\F_p$-cohomology}
In the following two subsections, we recall some basic definitions and properties of Massey products in the $\F_p$-cohomology of pro-$p$ groups, and the ``translation'' of Massey products in terms of upper unitriangular representations --- this is all we will need to prove Theorem~\ref{thm:main intro}.
For the formal definition of Massey products in $\F_p$-cohomology of pro-$p$ groups in terms of cochains see, e.g., \cite[\S~2.1]{cq:massey}.

\begin{defin}\rm
Let $G$ be a pro-$p$ group, let $n$ be a positive integer, $n\geq2$, and let $\alpha_1,\ldots,\alpha_n$ be a sequence of elements of $\rmH^1(G)$.
\begin{itemize}
 \item[(a)] The $n$-fold Massey product $\langle\alpha_1,\ldots,\alpha_n\rangle$ is said to be {defined} if it is non-empty.
 \item[(b)] The $n$-fold Massey product $\langle\alpha_1,\ldots,\alpha_n\rangle$ is said to {vanish} if $0\in \langle\alpha_1,\ldots,\alpha_n\rangle$.
 \item[(c)] The $n$-fold Massey product $\langle\alpha_1,\ldots,\alpha_n\rangle$ is said to be {essential} if it is defined but it does not vanish.
 \end{itemize}
 \end{defin}

Massey products satisfy the following (see, e.g., \cite[Rem.~2.2 and Prop.~2.6]{cq:massey}).

\begin{prop}\label{prop:massey}
 Let $G$ be a pro-$p$ group, and let $\alpha_1,\ldots,\alpha_n$ be a sequence of elements of $\rmH^1(G)$.
 Then one has the following.
 \begin{itemize}
  \item[(i)] if the $n$-fold Massey product $\langle\alpha_1,\ldots,\alpha_n\rangle$ is defined then
\begin{equation}\label{eq:cup triv}
 \alpha_1\smallsmile\alpha_2=\alpha_2\smallsmile\alpha_3=\ldots=\alpha_{n-1}\smallsmile\alpha_n=0;
 \end{equation}
 \item[(ii)] if $\alpha_i=0$ for some $i$, then the $n$-fold Massey product $\langle\alpha_1,\ldots,\alpha_n\rangle$ vanishes.
\end{itemize}
\end{prop}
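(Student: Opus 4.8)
The plan is to deduce both assertions directly from the cochain description of Massey products recalled in \cite[\S~2.1]{cq:massey}. Recall that a \emph{defining system} for $\langle\alpha_1,\ldots,\alpha_n\rangle$ consists of continuous $1$-cochains $a_{ij}\in C^1(G)$, one for each pair $1\leq i<j\leq n+1$ with $(i,j)\neq(1,n+1)$, such that $a_{j,j+1}$ is a cocycle representing $\alpha_j$ and $\partial a_{ij}=\sum_{k=i+1}^{j-1}\bar a_{ik}\smallsmile a_{kj}$ (with the usual sign normalization $\bar a=(-1)^{\deg a}a$); the set $\langle\alpha_1,\ldots,\alpha_n\rangle$ then consists of the cohomology classes of the $2$-cocycles $\sum_{k=2}^{n}\bar a_{1k}\smallsmile a_{k,n+1}$ obtained from all defining systems, and $\langle\alpha_1,\ldots,\alpha_n\rangle$ is defined exactly when some defining system exists.

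Statement~(i) is then immediate. Assume $\langle\alpha_1,\ldots,\alpha_n\rangle$ is defined, fix a defining system $(a_{ij})$, and take $i$ with $1\leq i\leq n-1$; whenever $n\geq3$ the pair $(i,i+2)$ is a legitimate index of the defining system, and its relation has a one-term right-hand side, namely $\partial a_{i,i+2}=\bar a_{i,i+1}\smallsmile a_{i+1,i+2}$. Hence $a_{i,i+1}\smallsmile a_{i+1,i+2}$ is a coboundary, so $\alpha_i\smallsmile\alpha_{i+1}=0$ in $\rmH^2(G)$; letting $i$ vary gives \eqref{eq:cup triv}.

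For statement~(ii) I would argue as follows. If $\langle\alpha_1,\ldots,\alpha_n\rangle$ is not defined there is nothing to prove, so assume it is defined, fix a defining system $(b_{ij})$, and suppose $\alpha_i=0$ for a given $i$. Call a pair $(k,l)$ \emph{straddling} if $k\leq i<l$, and set $a_{kl}=0$ for straddling pairs and $a_{kl}=b_{kl}$ otherwise. Every pair $(k,l)$ with $k<l$ is either straddling, or lies entirely to the left of the $i$-th strand ($l\leq i$), or lies entirely to its right ($k\geq i+1$), and these three families are disjoint; the defining relation of a pair in the left or right family involves only pairs of the same family, hence is untouched. Since $\alpha_i=0$ is represented by the zero cochain the cochains $a_{j,j+1}$ still represent $\alpha_j$; and for a straddling pair $(k,l)$ every summand $\bar a_{km}\smallsmile a_{ml}$ of its defining relation has a straddling factor, namely $(m,l)$ if $m\leq i$ and $(k,m)$ if $m\geq i+1$, so that $\partial a_{kl}=0=\partial 0$. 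Thus $(a_{kl})$ is again a defining system, and in its value $\sum_{k=2}^{n}\bar a_{1k}\smallsmile a_{k,n+1}$ every summand vanishes, since $(1,k)$ straddles when $k>i$ and $(k,n+1)$ straddles when $k\leq i$. Hence $0\in\langle\alpha_1,\ldots,\alpha_n\rangle$; in particular such a Massey product is never essential.

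The only step requiring care is the bookkeeping in part~(ii): one must check that the straddling/left/right trichotomy genuinely partitions the index set — including the extremes $i=1$ and $i=n$, where the left, resp. right, family is empty — and that replacing the straddling cochains by $0$ is compatible with every defining relation simultaneously. This aside, the argument is a direct translation of the definitions; alternatively, one can phrase part~(ii) through the unipotent-representation dictionary of Proposition~\ref{prop:unipotent representation}, where the same substitution makes a Massey-type homomorphism $G\to\bar{\mathbb{U}}_{n+1}$ factor through a block-diagonal subgroup $\mathbb{U}_{i}\times\mathbb{U}_{n+1-i}\leq\mathbb{U}_{n+1}$, which lifts tautologically to $\mathbb{U}_{n+1}$.
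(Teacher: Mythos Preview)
The paper does not prove Proposition~\ref{prop:massey}: it simply refers the reader to \cite[Rem.~2.2 and Prop.~2.6]{cq:massey}. Your proposal therefore supplies strictly more than the paper does, and the argument you give is the standard one via defining systems; both parts are correct. The alternative phrasing at the end, via Proposition~\ref{prop:unipotent representation} and the block-diagonal subgroup $\dbU_i\times\dbU_{n+1-i}$, is the same idea in matrix language and is also valid.

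One small wording issue in part~(ii): with the paper's convention, ``vanishes'' means $0\in\langle\alpha_1,\ldots,\alpha_n\rangle$, which presupposes that the Massey product is defined. It is not true that $\langle\alpha_1,\ldots,\alpha_n\rangle$ is automatically defined when some $\alpha_i=0$ (e.g.\ for $n=4$, $\alpha_2=0$, one still needs $\alpha_3\smallsmile\alpha_4=0$ to build $a_{3,5}$), so the sentence ``if it is not defined there is nothing to prove'' is literally at odds with that convention. Your argument actually proves the correct statement --- if $\alpha_i=0$ then the Massey product is not essential --- and your closing remark says exactly this; just rephrase the opening of (ii) accordingly.
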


A pro-$p$ group $G$ is said to satisfy the {$n$-Massey vanishing property}, $n\geq2$, with respect to $\F_p$, if every defined $n$-fold Massey product in $\bfH^\bullet(G)$ vanishes.
For this reason, Mina\v c-T\^an's conjecture is also called the ``Massey vanishing conjecture''.
Moreover, $G$ is said to satisfy the {strong} $n$-Massey vanishing property, with respect to $\F_p$, if
every sequence $\langle\alpha_1,\ldots,\alpha_n\rangle$ of length $n$ of elements of $\rmH^1(G)$ satisfying condition \eqref{eq:cup triv} yields a vanishing $n$-fold Massey product (cf. \cite[Def.~1.2]{pal:Massey}).
Clearly, the strong Massey vanishing property is stronger than the Massey vanishing property.
In \cite[Question~1.5]{cq:massey} it is asked whether the maximal pro-$p$ Galois group $G_{\K}(p)$ of a field $\K$ containing a root of 1 of order $p$ has the strong $n$-Massey vanishing property for every $n>2$, if it is a finitely generated pro-$p$ group (see also \cite[Question~4.8]{cq:pyt}).

\subsection{Upper unitriangular matrices}

For $n\geq 1$ let
$$\dbU_{n+1}=\left\{\left(\begin{array}{ccccc} 1 & a_{1,2}&\cdots&& a_{1,n+1} \\ &1&a_{2,3}&\cdots&a_{2,n+1} \\
&&\ddots &\ddots&\vdots \\ &&&1& a_{n,n+1} \\ &&&&1                    \end{array}\right)\mid a_{i,j}\in\F_p\right\}\subseteq\mathrm{GL}_{n+1}(\F_p)$$
be the $p$-group of upper unitriangular matrices with entries in $\F_p$.
The center of $\dbU_{n+1}$ is
\[
 \Zen(\dbU_{n+1})=\{\:I_{n+1}+aE_{1,n+1}\:\mid\:a\in\F_p\:\},
\]
where $I_{n+1}$ denote the $(n+1)\times(n+1)$-identity matrix, and for $1\leq i<j\leq n+1$ let $E_{i,j}$ denote the $(n+1)\times(n+1)$-matrix whose $(i,j)$-entry is 1, and all other entries are 0.
The projection on the $(1,n+1)$-entry yields an isomorphism (of cyclic groups of order $p$) $\Zen(\dbU_{n+1})\simeq\F_p$.
We put $$\bar\dbU_{n+1}=\dbU_{n+1}/\Zen(\dbU_{n+1}).$$

\subsection{Upper unitriangular matrices and Massey products}

Let $G$ be a pro-$p$ group, and let $\rho\colon G\to\dbU_{n+1}$ be a homomorphism of pro-$p$ groups.
For every $i=1,\ldots,n$ the $(i,i+1)$-entry of $\rho$, denoted by $\rho_{i,i+1}$, is a homomorphism $G\to\F_p$, and thus it may be considered as an element of $\rmH^1(G)$.
Analogously, if $n\geq2$ and $\bar\rho\colon G\to\bar\dbU_{n+1}$ is a homomorphism of pro-$p$ groups, then for every $i=1,\ldots,n$ the $(i,i+1)$-entry of $\bar\rho$, denoted by $\bar\rho_{i,i+1}$, is a homomorphism $G\to\F_p$, and thus it may be considered as an element of $\rmH^1(G)$ as well.

The following is the pro-$p$ version of the ``translation'' of Massey products in $\F_p$-cohomology in terms of upper unitriangular representations due to W.~Dwyer (cf., e.g., \cite[Lemma~9.3]{eq:kummer}, see also \cite[\S~8]{ido:Massey}).

\begin{prop}\label{prop:unipotent representation}
Let $G$ be a pro-$p$ group and let $\alpha_1,\ldots,\alpha_n$ be a sequence of elements of $\rmH^1(G)$, with $n\geq2$.
\begin{itemize}
 \item[(i)] The $n$-fold Massey product $\langle\alpha_1,\ldots,\alpha_n\rangle$ is defined if, and only if, there exists a continuous homomorphism $ \bar\rho\colon G\to\bar\dbU_{n+1}$ such that $\bar\rho_{i,i+1}=\alpha_i$ for every $i=1,\ldots,n$.
 \item[(ii)] The $n$-fold Massey product $\langle\alpha_1,\ldots,\alpha_n\rangle$ vanishes if, and only if, there exists a continuous homomorphism $ \rho\colon G\to\dbU_{n+1}$ such that $\rho_{i,i+1}=\alpha_i$ for every $i=1,\ldots,n$.
\end{itemize}
\end{prop}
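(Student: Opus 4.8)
The plan is to prove Proposition~\ref{prop:unipotent representation} by unwinding the cochain-level definition of Massey products and comparing it with the cocycle conditions for a map into $\dbU_{n+1}$ (or $\bar\dbU_{n+1}$). Recall that a continuous $1$-cochain $\rho\colon G\to\dbU_{n+1}$ with $(i,j)$-entries $\rho_{i,j}\in C^1(G)$ (for $i<j$, with $\rho_{i,i}=1$) is a homomorphism precisely when, for all $i<j$, one has the relation $\partial\rho_{i,j}=\sum_{i<k<j}\rho_{i,k}\smallsmile\rho_{k,j}$; this is the standard repackaging of the multiplicativity of matrix multiplication in $\dbU_{n+1}$. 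Comparing with the definition of a defining system for the Massey product $\langle\alpha_1,\ldots,\alpha_n\rangle$: a defining system is a collection of cochains $a_{i,j}\in C^1(G)$ for $(i,j)\neq(1,n+1)$, $1\le i<j\le n+1$, with $a_{i,i+1}$ a cocycle representing $\alpha_i$, satisfying exactly $\partial a_{i,j}=\sum_{i<k<j}a_{i,k}\smallsmile a_{k,j}$ (up to the usual sign convention $\bar a_{i,k}=(-1)^i a_{i,k}$, which I will absorb by a diagonal change of basis), and the value of the Massey product attached to this defining system is the class of the $2$-cocycle $\sum_{1<k<n+1}a_{1,k}\smallsmile a_{k,n+1}$.

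First I would set up the dictionary: given a homomorphism $\bar\rho\colon G\to\bar\dbU_{n+1}$ with $\bar\rho_{i,i+1}=\alpha_i$, lift each entry $\bar\rho_{i,j}$ (for $(i,j)\neq(1,n+1)$) to a cochain $a_{i,j}\in C^1(G)$ — these are genuine $\F_p$-valued functions, so no obstruction to lifting set-theoretically — and observe that the homomorphism condition on $\bar\rho$ forces $\partial a_{i,j}=\sum_{i<k<j}a_{i,k}\smallsmile a_{k,j}$ for all $(i,j)\neq(1,n+1)$, i.e.\ exactly a defining system for $\langle\alpha_1,\ldots,\alpha_n\rangle$; hence the Massey product is defined. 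Conversely, a defining system assembles into a continuous homomorphism $G\to\bar\dbU_{n+1}$ with the prescribed superdiagonal. For part~(ii), the same argument one size larger: a defining system whose associated $2$-cocycle $\sum a_{1,k}\smallsmile a_{k,n+1}$ is a \emph{coboundary} $\partial a_{1,n+1}$ is the same data as a homomorphism into the full $\dbU_{n+1}$ (the entry $a_{1,n+1}$ filling the top-right corner makes all the $\dbU_{n+1}$ relations hold), and $0$ lies in the Massey product exactly when some defining system yields the zero class, i.e.\ exactly when such an $a_{1,n+1}$ exists.

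The key steps, in order: (1) record the cocycle characterization of homomorphisms $G\to\dbU_{n+1}$ and $G\to\bar\dbU_{n+1}$ in terms of the $\partial a_{i,j}=\sum a_{i,k}\smallsmile a_{k,j}$ relations; (2) recall the definition of defining system and of the value of a Massey product; (3) match the two, being careful about the sign twist $a_{i,k}\mapsto(-1)^i a_{i,k}$ that reconciles the matrix-product sign with the cup-product sign in the Massey convention (a conjugation of $\dbU_{n+1}$ by $\mathrm{diag}(1,-1,1,-1,\ldots)$); (4) for~(i), observe ``defined'' $=$ ``a defining system exists'' $=$ ``$\bar\rho$ exists''; (5) for~(ii), observe ``vanishes'' $=$ ``some defining system has zero value'' $=$ ``$\rho$ exists''. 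Continuity is automatic throughout, since a finite product of continuous cochains is continuous and $\dbU_{n+1}$ is finite.

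I expect the main obstacle to be purely bookkeeping rather than conceptual: getting the sign conventions exactly right so that the matrix-multiplication relations in $\dbU_{n+1}$ coincide on the nose with the defining-system relations in the chosen reference (e.g.\ \cite[\S~2.1]{cq:massey}), and checking that the set-theoretic lift of the entries of $\bar\rho$ does not require any compatibility beyond what the homomorphism property already gives. Since this proposition is well-known and the excerpt explicitly attributes it to Dwyer and cites \cite[Lemma~9.3]{eq:kummer} and \cite[\S~8]{ido:Massey}, the cleanest route in the paper is probably to carry out the dictionary in one direction carefully and invoke these references for the routine verification of the other, rather than reproving it \emph{ab ovo}.
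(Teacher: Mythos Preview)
Your sketch is the standard Dwyer argument and is correct, but you should know that the paper does not actually prove this proposition: it is stated without proof, with only the attribution ``cf., e.g., \cite[Lemma~9.3]{eq:kummer}, see also \cite[\S~8]{ido:Massey}'' in the preceding sentence. So there is nothing to compare against on the paper's side beyond a bare citation.

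Your final paragraph already anticipates exactly this situation. If you want to align with the paper, simply cite those references and omit the argument; if you prefer to include the cochain/defining-system dictionary you outlined, it is correct as written (the only delicate point is the sign twist, which you handle by the diagonal conjugation), and would constitute more than what the paper offers.
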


\subsection{Three lemmata on upper unitriangular matrices}\label{ssec:lemmata}

Here we provide four technical lemmata on upper unitriangular matrices which will be used to prove Theorem~\ref{thm:main intro}.

\begin{lem}\label{lemma:Jordan}
 Let $G$ be a pro-$p$ group, and let $\rho\colon G\to\dbU_{n+1}$ be a homomorphism of pro-$p$ groups for some $n\geq3$.
 Suppose that $\rho_{i,i+1}(x)=1$ for every $i=1,\ldots,n$ for some element $x\in G$.
 Then there exists a homomorphism of (pro-$p$) groups $\rho'\colon G\to\dbU_{n+1}$ such that $\rho'_{i,i+1}(y)=\rho_{i,i+1}(y)$ for all $i=1,\ldots,n$ and $y\in G$, and $ \rho'(x)=A$, where
 \begin{equation}\label{eq:matrix A}
 A=I_{n+1}+E_{1,2}+\ldots+E_{n,n+1}=\left(\begin{array}{ccccc} 1&1&0& &0 \\ &1&1&\ddots& \\ &&\ddots&\ddots&0 \\ &&&1&1 \\ &&&&1 \end{array}\right).
 \end{equation}
\end{lem}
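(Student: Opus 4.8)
The plan is to reduce $\rho(x)$ to its "Jordan form" by conjugating with a suitable matrix in $\dbU_{n+1}$, taking care that the conjugation does not disturb the superdiagonal entries of $\rho$ on all of $G$. First I would set $B=\rho(x)\in\dbU_{n+1}$; by hypothesis $B$ has all its $(i,i+1)$-entries equal to $1$. The key observation is that any element of $\dbU_{n+1}$ whose superdiagonal entries are all nonzero is conjugate \emph{within} $\dbU_{n+1}$ to the single Jordan block $A$ of \eqref{eq:matrix A}: this is a standard fact (the matrix $B-I_{n+1}$ is nilpotent of maximal rank $n$, hence similar to the nilpotent Jordan block, and one checks the conjugating matrix may be chosen upper triangular with nonzero diagonal, then rescaled into $\dbU_{n+1}$). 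So I would fix $P\in\dbU_{n+1}$ with $P B P\inv=A$ and define $\rho'=c_P\circ\rho$, where $c_P$ is conjugation by $P$; this is again a continuous homomorphism $G\to\dbU_{n+1}$, and $\rho'(x)=A$ by construction.

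The remaining point is that passing from $\rho$ to $\rho'=c_P\circ\rho$ does not change the entries $\rho_{i,i+1}$ on any $y\in G$. This is because conjugation by an element of $\dbU_{n+1}$ induces the identity on the "superdiagonal" quotient: concretely, for any $M\in\dbU_{n+1}$ the $(i,i+1)$-entry of $PMP\inv$ equals the $(i,i+1)$-entry of $M$. The quickest way to see this is to note that the map $\dbU_{n+1}\to\F_p^{\,n}$ sending $M$ to the tuple of its $(i,i+1)$-entries is a group homomorphism with kernel containing the commutator subgroup of $\dbU_{n+1}$ — indeed the $(i,i+1)$-entry of a product $MM'$ is the sum of the corresponding entries of $M$ and $M'$, since cross terms involve entries strictly further from the diagonal — hence it is invariant under conjugation. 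Therefore $\rho'_{i,i+1}(y)=\rho_{i,i+1}(y)$ for all $i$ and all $y\in G$, as required.

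The main obstacle is the linear-algebra lemma that a unitriangular matrix with nonzero superdiagonal is $\dbU_{n+1}$-conjugate to $A$, and in particular that the conjugating matrix can be taken in $\dbU_{n+1}$ rather than merely in the Borel of invertible upper triangular matrices; this requires observing that the action of the diagonal torus on the relevant set of matrices can be absorbed, so that $P$ may be normalized to have $1$'s on the diagonal. Everything else — the functoriality $\rho'=c_P\circ\rho$ being a continuous homomorphism, and the stability of the superdiagonal entries under conjugation — is a short and routine verification. I would record the conjugation lemma as a separate observation (or cite it), then give the two-line argument above.
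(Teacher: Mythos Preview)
Your approach is the paper's: conjugate $\rho(x)$ to $A$ by some $P\in\dbU_{n+1}$, then observe that $\dbU_{n+1}$-conjugation fixes the superdiagonal entries, so $\rho'=c_P\circ\rho$ works. The paper carries out the first step by explicitly building a basis $v_1,\ldots,v_{n+1}$ of $\F_p^{n+1}$ with $\rho(x)v_i=v_i+v_{i-1}$ and with $v_i$ upper-triangular in the standard coordinates, so that the change-of-basis matrix already lies in $\dbU_{n+1}$.

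One point to fix: your ``key observation'' as written is false, and in fact contradicts the invariance fact you use two paragraphs later. If conjugation by elements of $\dbU_{n+1}$ preserves the superdiagonal (it does, as you correctly argue), then a matrix with superdiagonal $(b_1,\ldots,b_n)$ can be $\dbU_{n+1}$-conjugate to $A$ only when $b_1=\cdots=b_n=1$, not merely when all $b_i\neq0$. The correct statement --- and all you need here, since $\rho_{i,i+1}(x)=1$ by hypothesis --- is that any element of $\dbU_{n+1}$ with superdiagonal identically~$1$ is $\dbU_{n+1}$-conjugate to $A$. Your ``rescale the diagonal'' sketch is then salvageable precisely in this case: if $Q=DU$ with $D$ diagonal, $U\in\dbU_{n+1}$, and $QBQ^{-1}=A$, then $UBU^{-1}=D^{-1}AD$; comparing superdiagonals (both equal to $1$) forces $D$ to be scalar, whence $UBU^{-1}=A$. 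You should make this dependence on the hypothesis explicit.
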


\begin{proof}
Set $A'=\rho(x)$.
 We claim that there exists a basis $\{v_1,\ldots,v_{n+1}\}$ of $\F_p^{n+1}$ such that
 \begin{equation}\label{eq:proof Jordan basis}
 A'v_{i}=v_{i}+v_{i-1} \qquad\text{and}\qquad 
  v_i=\left(\begin{array}{c} b_{i,1}\\ \vdots \\ b_{i,i-1}  \\1  \\0 \\\vdots  \end{array}\right)
 \end{equation}
--- namely, the $i$-th coordinate of $v_i$ is, and the coordinates from the $(i+1)$-th on are 0 --- for every $i=1,\ldots,n+1$.

Clearly, we may set $v_1=(1,0,\ldots)^T$ and $v_2=(0,1,0\ldots)^T$.
Now suppose that $\{v_1,\ldots,v_m\}$ is a set of vectors satisfying \eqref{eq:proof Jordan basis}, for $m\leq n$.
Then the linear system $Aw=w+v_m$, with $w=(x_1,\ldots,x_m,x_{m+1},0,\ldots)^T$, --- explicitly
\[
\setlength\arraycolsep{0.2pt}
\renewcommand\arraystretch{1.25}
\left\{\begin{array}{ccccccccccccc}
 x_{1} &+& x_2 &+& \ldots&\ldots& &+& a_{1,m+1}x_{m+1} &=& x_1 &+& b_{m,1}\\
&& x_2 &+& x_3 &+& \ldots &+& a_{2,m+1}x_{m+1} & =& x_2 &+& b_{m,2}\\
&&&&&&& &\vdots &&&\\
 &&&& x_{m-1} &+& x_m &+& a_{m-1,m+1}x_{m+1} &=& x_{m-1} &+& b_{m,m-1} \\
 &&&&&& x_m &+& x_{m+1} &=& x_m&+&1\\
 &&&&&&&& x_{m+1} &=&x_{m+1}&&
\end{array}\right.\]
--- has a solution with $x_{m}=v_{m+1}=1$, and we may set $v_{m+1}:=w$.

Let $M\in\mathrm{GL}_{n+1}(\F_p)$ be the matrix whose columns are the vectors $v_1,\ldots,v_{n+1}$.
Then $A=M^{-1}A'M$.
Moreover, $M\in \dbU_{n+1}$, and therefore the conjugation of $\dbU_{n+1}$ with $M$ is an inner automorphism of $\dbU_{n+1}$.
Hence, we set $\rho'=M^{-1}\rho M$, so that $\Img(\rho')\subseteq\dbU_{n+1}$, and one has $\rho'(x)=A$.
Finally, the equality $\rho'_{i,i+1}(y)=\rho_{i,i+1}(y)$ is satisfied for all $i=1,\ldots,n$, as for any $B,C\in\dbU_{n+1}$, the $(i,i+1)$-entries of $C^{-1}BC$ and of $B$ are equal.
\end{proof}

\begin{lem}\label{lemma:comm matrices diagonals}
 For $n\geq3$, let $A\in\dbU_{n+1}$ be as in \eqref{eq:matrix A}, and let $B\in\dbU_{n+1}$ be a matrix (with entries $b_{i,j}$ for $1\leq i<j\leq n-1$) such that $[A',B]\in\Zen(\dbU_{n+1})$.
 Then $b_{i,i+k}=b_{1,1+k}$ for every $k=1,\ldots,n-2$ and $i=1,\ldots,n+1-k$ --- namely, 
\begin{equation}\label{eq:matrix B}
B =\left(\begin{array}{ccccccc} 1&b_{1}&b_{2}& \cdots &b_{n-2}&b_{1,n}&b_{1,n+1} \\ &1&b_1&\ddots&&b_{n-2}&b_{2,n+1} \\ 
&&1&\ddots&\ddots&&b_{n-2} \\&&&\ddots&\ddots&\ddots& \vdots \\ &&&&1&b_1&b_{2}  \\ &&&&&1&b_{1} \\ &&&&&&1 \end{array}\right), 
\end{equation}
where $b_k=b_{1,1+k}$ for $k=1,\ldots,n-1$.
\end{lem}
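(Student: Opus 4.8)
The plan is to convert the hypothesis on $[A,B]$ into an explicit linear condition on the entries of $B$, and then to read off from that condition a recursion running along the superdiagonals of $B$.

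First I would write $A=I_{n+1}+N$ with $N=E_{1,2}+\cdots+E_{n,n+1}$ the nilpotent ``shift'' matrix. Since the last row of every unitriangular matrix $M$ equals $e_{n+1}^{\,T}$, one has $E_{1,n+1}M=E_{1,n+1}$; hence $[A,B]\in\Zen(\dbU_{n+1})$ means $ABA^{-1}B^{-1}=I_{n+1}+cE_{1,n+1}$ for some $c\in\F_p$, equivalently $AB=(I_{n+1}+cE_{1,n+1})BA=BA+cE_{1,n+1}$. Because $A=I_{n+1}+N$, the difference $AB-BA$ simplifies to $NB-BN=[N,B]$, so the hypothesis is equivalent to the statement that every entry of $[N,B]$ vanishes except possibly the $(1,n+1)$-entry.

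Next I would compute $[N,B]$ entrywise. From $N_{i,k}=[k=i+1]$ and $N_{k,j}=[k=j-1]$ one gets $(NB)_{i,j}=B_{i+1,j}$ and $(BN)_{i,j}=B_{i,j-1}$, with the convention that $B_{s,t}=0$ whenever $s$ or $t$ falls outside $\{1,\dots,n+1\}$; therefore
\[
  [N,B]_{i,j}=B_{i+1,j}-B_{i,j-1}.
\]
The two entries on the right lie on the same superdiagonal $j-i$ of $B$, and writing $j=m+1+k$, $i=m$ turns the vanishing $[N,B]_{i,j}=0$ into $B_{m+1,\,m+1+k}=B_{m,\,m+k}$, valid for every in-range pair $(m,m+1+k)\ne(1,n+1)$. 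The only pair of this shape equal to $(1,n+1)$ occurs for $m=1$, $k=n-1$; hence for each $k\le n-2$ all the consecutive equalities along the $k$-th superdiagonal are in force, which chains them into $b_{i,i+k}=b_{1,1+k}$ for $i=1,\dots,n+1-k$. For $k=n-1$ the single relation that would link $b_{1,n}$ and $b_{2,n+1}$ is precisely the one that drops out, and for $k=n$ the superdiagonal consists of the $(1,n+1)$-entry alone; so these entries stay free, and setting $b_k=b_{1,1+k}$ produces exactly the shape displayed in \eqref{eq:matrix B}.

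I do not expect a real obstacle here: the argument is a direct unwinding of the description of $\Zen(\dbU_{n+1})$ together with an elementary computation of the commutator of $B$ with the shift matrix $N$. The only places that call for a little care are the boundary conventions in the formula for $[N,B]_{i,j}$ (indices outside $\{1,\dots,n+1\}$ contribute $0$) and the bookkeeping pinning down which single relation on the penultimate superdiagonal is lost, so that the conclusion comes out with the sharp ranges $k=1,\dots,n-2$ and $i=1,\dots,n+1-k$ rather than something weaker.
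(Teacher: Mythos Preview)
Your argument is correct and follows the same route as the paper: compare the $(i,i+k)$-entries of $AB$ and $BA$ to obtain $b_{i+1,i+k}=b_{i,i+k-1}$, which is exactly the recursion along the $(k-1)$-th superdiagonal of $B$. Your reduction to $[N,B]$ via $A=I_{n+1}+N$ is a cosmetic repackaging of the same computation, and your bookkeeping on the excluded entry $(1,n+1)$ makes the ranges $k=1,\dots,n-2$ explicit where the paper leaves this implicit.
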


\begin{proof}
 Since $AB\equiv BA\bmod \Zen(\dbU_{n+1})$, for $k=2,\ldots,n-1$ and $i=1,\ldots,n+1-k$ the $(i,i+k)$-entries of the matrices $AB$ and $BA$ are equal --- observe that the the $(i,i+1)$-entries of the two products are always equal, as the $(i,i+1)$-entries of $[A,B]$ are 0.
 Therefore,
 \[
  b_{i,i+k}+b_{i+1,i+k}=b_{i,i+k-1}+b_{i,i+k},
 \]
which implies $b_{i,i+(k-1)}=b_{i+1,i+1+(k-1)}$.
\end{proof}

\begin{lem}\label{lem: B C comm}
For $n\geq3$, let $B,C\in\dbU_{n+1}$ be matrices with $B$ as in \eqref{eq:matrix B}, and 
\begin{equation}\label{eq:matrix C}
C= \left(\begin{array}{cccccc} 1&1&c_{1,3}&\cdots & c_{1,n}&c_{1,n+1} \\ &1&0&\ddots& &c_{2,n+1} \\ &&1&\ddots&\ddots&\vdots
 \\ &&&\ddots&0&c_{n-1,n+1} \\ &&&&1&1 \\ &&&&&1 \end{array}\right),
\end{equation}
and suppose that $[C,B]\in\Zen(\dbU_{n+1})$.
Then $b_1=\ldots=b_{n-2}=0$.
\end{lem}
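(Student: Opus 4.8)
The plan is to read the hypothesis $[C,B]\in\Zen(\dbU_{n+1})$ as the assertion that the matrices $CB$ and $BC$ agree in every entry except possibly the $(1,n+1)$-one, and then to peel off the values $b_1,\ldots,b_{n-2}$ one superdiagonal at a time, by induction on $j$, comparing the $(1,j+2)$-entries of $CB$ and of $BC$.

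So first I would fix $j$ with $1\le j\le n-2$ and assume inductively that $b_1=\cdots=b_{j-1}=0$ --- an empty assumption when $j=1$, which is the base case. Since $j\le n-2$ one has $3\le j+2\le n$, hence $(1,j+2)\neq(1,n+1)$ and the equality $(CB)_{1,j+2}=(BC)_{1,j+2}$ is legitimate. Expanding $(CB)_{1,j+2}=\sum_k C_{1,k}B_{k,j+2}$ and using $C_{1,1}=C_{1,2}=1$: the terms with $k\ge3$ involve entries $B_{k,j+2}$ lying on the superdiagonals $1,\ldots,j-1$ of $B$ --- hence zero by the inductive hypothesis --- together with the single diagonal term $B_{j+2,j+2}=1$; while the term $C_{1,2}B_{2,j+2}$ contributes exactly $b_j$, since the $(2,j+2)$-entry lies on the $j$-th superdiagonal of $B$, which by Lemma~\ref{lemma:comm matrices diagonals} is constant equal to $b_j$ (here one uses $j\le n-2$, so that this superdiagonal is among the ones forced to be constant). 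Expanding $(BC)_{1,j+2}=\sum_k B_{1,k}C_{k,j+2}$ in the same way, the inductive hypothesis kills the contributions with $2\le k\le j$, and what is left --- besides the terms $B_{1,1}C_{1,j+2}$ and $B_{1,j+2}C_{j+2,j+2}$, which cancel against the two analogous summands of $(CB)_{1,j+2}$ --- is $B_{1,j+1}C_{j+1,j+2}=b_j\,C_{j+1,j+2}$. Here the \emph{special shape of $C$} enters decisively: since $2\le j+1\le n-1$, the description \eqref{eq:matrix C} of $C$ gives $C_{j+1,j+2}=0$, so this term vanishes. Comparing the two expressions, everything cancels except the lone term $b_j$ coming from $(CB)_{1,j+2}$, whence $b_j=0$; this closes the induction.

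The computations are entirely routine; the only bookkeeping worth care is that Lemma~\ref{lemma:comm matrices diagonals} guarantees constancy only for the superdiagonals $1,\ldots,n-2$ of $B$ (the top two superdiagonals of $B$ being genuinely unconstrained), so one must check that the entries of $B$ actually used, namely $B_{2,j+2}$ and $B_{1,j+1}$ --- both on the $j$-th superdiagonal with $j\le n-2$ --- lie within that range, and that the pattern of surviving versus vanishing summands is the same in the base case $j=1$ as in the general step. I expect this indexing to be the main, though very mild, obstacle; there is no conceptual difficulty hidden here.
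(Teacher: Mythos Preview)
Your argument is correct and is essentially the paper's own proof: both compare the $(1,1+k)$-entries of $CB$ and $BC$ for $k=2,\ldots,n-1$ (your $j=k-1$) and peel off $b_1,\ldots,b_{n-2}$ by induction, the key point in each step being that $C_{j+1,j+2}=0$ since $2\le j+1\le n-1$. One cosmetic remark: the constancy of the superdiagonals of $B$ is already part of the hypothesis ($B$ is as in \eqref{eq:matrix B}), so there is no need to invoke Lemma~\ref{lemma:comm matrices diagonals} inside the proof.
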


\begin{proof}
  Since $CB\equiv BC\bmod \Zen(\dbU_{n+1})$, for $k=2,\ldots,n-1$ and the $(1,1+k)$-entries of the matrices $CB$ and $BC$ are equal --- observe that the the $(1,2)$-entries of the two products are always equal, as the $(1,2)$-entry of $[C,B]$ is 0.
 Therefore,
 \begin{equation}\label{eq:equality entries BC CB}
 \begin{split}
  & b_{k}+b_{k-1}+c_{1,3}b_{k-2}+\ldots + c_{1,1+(k-2)}b_2 + c_{1,1+(k-1)}b_1+ c_{1,1+k}= \\
  & =  c_{1,1+k}+b_1 c_{2,1+k}+b_2c_{3,1+k}+\ldots+b_{k-2}c_{k-1,1+k}+b_{k-1}\cdot 0+b_k.
 \end{split}  
 \end{equation}
If $k=2$, then equality \eqref{eq:equality entries BC CB} is $b_2+b_1+c_{1,3}=c_{1,3}+b_2$, which implies that $b_1=0$.
For arbitrary $k$, if $b_1=\ldots=b_{k-2}=0$, then equality \eqref{eq:equality entries BC CB} is 
$b_k+b_{k-1}+c_{1,1+k}=c_{1,1+k}+b_k$, which implies that also $b_{k-1}$ is 0.
\end{proof}

\begin{lem}
For $n\geq3$ and $c\in\F_p$, $a\neq0$, let $C\in\dbU_{n+1}$ be a matrix --- with entries $c_{i,j}$ for $1\leq i<j\leq n+1$ ---, such that $c_{i,i+1}\in\{0,a\}$ for every $i=1,\ldots,n$, and $c_{i,j}=0$ for $j-i\geq2$.
Then for every $f\geq1$ ($f\geq2$ if $p=2$) there exists a matrix $B\in\dbU_{n+1}$ --- with entries $b_{i,j}$ for $1\leq i<j\leq n+1$ --- such that $b_{1,2}=\ldots=b_{n,n+1}=0$ and 
\[
 [B,C]=C^{p^f}.
\]
\end{lem}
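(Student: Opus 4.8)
The plan is to reduce the problem to a single ``Jordan-type'' block and there to recast the identity $[B,C]=C^{p^f}$ as an exact conjugacy, which I then solve one superdiagonal at a time.

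\textbf{Step 1: powers of $C$ and a block decomposition.} Put $N=C-I=\sum_i c_{i,i+1}E_{i,i+1}$, supported on the superdiagonal. Since $(1+x)^{p^f}=1+x^{p^f}$ in $\F_p[x]$ (iterate $(1+x)^p=1+x^p$), one gets $C^{p^f}=(I+N)^{p^f}=I+N^{p^f}$, hence $C^{p^f+1}=C^{p^f}C=I+N+N^{p^f}+N^{p^f+1}$. As $[B,C]=BCB^{-1}C^{-1}$, the condition $[B,C]=C^{p^f}$ is equivalent to $BCB^{-1}=C^{p^f+1}$. The positions $i$ with $c_{i,i+1}=0$ cut $\{1,\dots,n+1\}$ into consecutive intervals, along which $C$ is block-diagonal, each block of size $\ge 2$ being $C_s=I+aJ$ with $J$ the nilpotent Jordan block of the appropriate size. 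It suffices to find, for each block, an upper unitriangular $B_s$ (of the block's size) with vanishing superdiagonal and $[B_s,C_s]=C_s^{p^f}$, and then set $B=\bigoplus_s B_s$; its superdiagonal vanishes because each $B_s$'s does and the cross-block entries are $0$. When a block has size $\le p^f$ one has $J^{p^f}=0$ there, so $C_s^{p^f}=I$ and $B_s=I$ works.

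\textbf{Step 2: reduction to a conjugacy on one block.} Fix a block of size $N+1\ge p^f+1$, so $C_0=I+aJ$ with $J^{p^f}\ne 0$. By Step~1, $C_0^{p^f}=I+a^{p^f}J^{p^f}$ and $C_0^{p^f+1}=I+aJ+a^{p^f}J^{p^f}+a^{p^f+1}J^{p^f+1}$, so $[B_0,C_0]=C_0^{p^f}$ is equivalent to $B_0JB_0^{-1}=J'$, where
\[
J':=J+a^{p^f-1}J^{p^f}+a^{p^f}J^{p^f+1}.
\]
Writing $B_0=I+M$, the equation $B_0J=J'B_0$ rearranges to $[M,J]=(J'-J)+(J'-J)M$.

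\textbf{Step 3: solving superdiagonal by superdiagonal.} Grade matrices by ``level'' $j-i$ and write $M=\sum_\ell M_\ell$. For $M_\ell=\sum_j\mu_jE_{j,j+\ell}$ one has $[M_\ell,J]=\sum_j(\mu_j-\mu_{j+1})E_{j,j+\ell+1}$, so for any $Y$ supported in level $\ell+1$ the equation $[M_\ell,J]=Y$ is a solvable telescoping system in the $\mu_j$. Now $J'-J$ is supported in levels $p^f$ and $p^f+1$, while $(J'-J)M$ is supported in levels $\ge 2p^f-1>p^f+1$ (here $p^f\ge 3$). Comparing levels in $[M,J]=(J'-J)+(J'-J)M$ then yields: $[M_{p^f-1},J]=a^{p^f-1}J^{p^f}$ at level $p^f$; $[M_{p^f},J]=a^{p^f}J^{p^f+1}$ at level $p^f+1$; $M_\ell=0$ may be taken for $p^f+1\le\ell\le 2p^f-3$; and at each level $\lambda\ge 2p^f-1$ the equation is $[M_{\lambda-1},J]=[(J'-J)M]_\lambda$, whose right side involves only $M_{\lambda-p^f}$ and $M_{\lambda-p^f-1}$, of strictly lower level than $\lambda-1$. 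Solving these in order of increasing level (the recursion stops once no matrices of the relevant level exist) produces $M$ supported in levels $\ge p^f-1\ge 2$ --- this last inequality is exactly where the hypothesis ($f\ge 1$, resp.\ $f\ge 2$ for $p=2$, i.e.\ $p^f\ge 3$) enters --- so $B_0=I+M$ has vanishing superdiagonal and satisfies $[B_0,C_0]=C_0^{p^f}$. Assembling the blocks completes the proof.

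\textbf{Expected main obstacle.} The delicate point is that a one-superdiagonal ansatz for $M$ only fixes the level-$p^f$ part of $[B_0,C_0]$ and leaves uncontrolled higher-level terms; rewriting the target as the exact conjugacy $B_0JB_0^{-1}=J'$ and observing that the induced equations $[M_\ell,J]=Y_\ell$ are always solvable and couple each $M_\ell$ only to strictly lower levels is what makes the recursion close. The binomial-coefficient arithmetic behind $C^{p^f}=I+N^{p^f}$ and $C^{p^f+1}=I+N+N^{p^f}+N^{p^f+1}$ is the only other point needing care.
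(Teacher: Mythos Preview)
Your proof is correct. The block decomposition in Step~1 is sound; the rewriting of $[B_0,C_0]=C_0^{p^f}$ as the exact conjugacy $B_0JB_0^{-1}=J'$ in Step~2 is the right move; and the level-by-level recursion in Step~3 closes because the map $M_\ell\mapsto[M_\ell,J]$ is surjective onto level $\ell+1$ (one free parameter, telescoping), while the feedback term $(J'-J)M$ only couples $M_{\lambda-1}$ to $M_{\lambda-p^f}$ and $M_{\lambda-p^f-1}$, which are strictly lower in level since $p^f\ge3$. The resulting $M$ lives in levels $\ge p^f-1\ge2$, so $B_0=I+M$ has vanishing superdiagonal, as required.

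As for the comparison: the paper's own proof of this lemma is only a fragment --- it records the identity
\[
[I_{n+1}+bE_{i,j},C]=I_{n+1}-\epsilon_1abE_{i-1,j}+\epsilon_2abE_{i,j+1}
\]
and stops, presumably intending $B$ to be assembled as a product of elementary matrices $I+bE_{i,j}$, each commutator contributing one entry of $C^{p^f}$. Your argument takes a genuinely different route: you do not work with elementary factors at all, but instead block-decompose $C$ along the zero superdiagonal entries and, on each nontrivial block, solve the conjugacy equation directly via a graded recursion. This is closer in spirit to the paper's later Lemma~\ref{lem:comm qpower}, which also block-decomposes but then outsources the single-block case to \cite[Prop.~2.10]{cq:massey}; your Step~3 is essentially a self-contained proof of that cited proposition in the case at hand. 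The advantage of your approach is that it is complete and transparent about why $p^f\ge3$ is needed; the paper's hinted elementary-matrix approach would be shorter if carried through, but as written it leaves the reader to verify that the successive commutators do not interfere.
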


\begin{proof}
 Set $q=p^f$.
 First, observe that for every $1\leq i,j\leq n+1$, $j-i\geq2$, one has
 \[
  [I_{n+1}+bE_{i,j},C]=I_{n+1}-\epsilon_1abE_{i-1,j}+\epsilon_2abE_{i,j+1},
 \]
where 
\[
 \epsilon_1=\begin{cases} 1 & \text{if }b_{j-1,j}=a,\\ 0 & \text{if }b_{j-1,j}=0, \end{cases}
 \qquad\text{and}\qquad
 \epsilon_2=\begin{cases} 1 & \text{if }b_{i,i+1}=a,\\ 0 & \text{if }b_{j-1,j}=0. \end{cases}
 \qedhere\]
\end{proof}


\section{Oriented pro-$p$ RAAGs and Massey products}\label{sec:massey RAAGs}

\subsection{Digraphs that are not special-clique}\label{ssec:i}

Recall that a digraph $\Gamma=(\calV,\calE)$ is special if, and only if, it does not contain induced subdigraphs as in \eqref{eq:subdigraphs no special}, and it is also special-clique if, and only if, one excludes also induced subdigraphs as in \eqref{eq:subdigraph no specialclique}.
For our purposes, it is convenient to reformulate the above conditions in the following way: a digraph $\Gamma=(\calV,\calE)$ is not a special-clique special digraph if, and only if, it contains an induced subdigraph $\Gamma'$ with geometric representation
\begin{equation}\label{eq:nospecialclique 1}
  \xymatrix@R=1.5pt{&w& \\& \bullet &  \\
  \circ\ar[ur] & & \circ\ar[ul] \\ u && v}\qquad\text{or}\qquad
  \xymatrix@R=1.5pt{&w& \\& \bullet\ar[dr] & \\
  \circ\ar[ur] & & \circledast\ar@/_0.8pc/@{.>}[ul] \\ u && v}
\end{equation}
(in the right-side representation $(w,v)\in\calE$, and possibly also $(v,w)\in \calE$); or
an induced subdigraph $\Gamma'$ with geometric representation
\begin{equation}\label{eq:nospecialclique 2}
  \xymatrix@R=1.5pt{&w& \\& \bullet\ar[ddr] & \\  \\
  \circledast\ar[uur]\ar@/_/@{<->}[rr] & & \bullet \\ u && v}\qquad\text{or}\qquad
  \xymatrix@R=1.5pt{&w& \\& \bullet\ar@{-}[ddr] &  \\ \\
  \circledast\ar[uur]\ar@/_/@{<->}[rr] & & \circledast \\ u && v}
\end{equation}
where the two-headed arrows between $u$ and $v$ mean that at least one of $(u,v)$ and $(v,u)$ --- possibly both --- are edges.

Our goal is to show that in both cases, the $\F_p$-cohomology of an associated oriented pro-$p$ RAAGs gives rise to essential Massey products.


\subsubsection{First case: induced subdigraph of type~\eqref{eq:nospecialclique 1}}

\begin{prop}\label{prop:nomassey nospecialstar1}
Let $\Gamma=(\calV,\calE)$ be a digraph as in \eqref{eq:nospecialclique 1}, and let $G$ be the oriented pro-$p$ RAAG associated to $\Gamma$ and to $q$.
Then the $q$-fold Massey product
\begin{equation}\label{eq:masseyprod nospecialclique case1}
  \langle\alpha,\underbrace{\beta,\ldots,\beta}_{(q-2)\text{times}},\alpha\rangle,
\end{equation}
with $\alpha=u^\ast+v^\ast,\beta=u^\ast\in\rmH^1(G)$, is essential.
\end{prop}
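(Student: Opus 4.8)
The plan is to run the argument through Dwyer's translation, Proposition~\ref{prop:unipotent representation}: I must (a) produce a continuous homomorphism $G\to\bar\dbU_{q+1}$ whose successive super-diagonal entries are $\alpha_1=\alpha,\ \alpha_2=\dots=\alpha_{q-1}=\beta,\ \alpha_q=\alpha$, which shows the $q$-fold Massey product is defined, and (b) show that no continuous homomorphism $G\to\dbU_{q+1}$ realizes that same data, which shows it does not vanish; together these give essentiality. Here $G=\langle u,v,w\mid wuw^{-1}=u^{1+q},\ wvw^{-1}=v^{1+q}\rangle$, and by \eqref{eq:H1Lam1}--\eqref{eq:H2Lam2} we have $\rmH^1(G)=\langle u^\ast,v^\ast,w^\ast\rangle$ with $u^\ast\smallsmile u^\ast=v^\ast\smallsmile v^\ast=u^\ast\smallsmile v^\ast=0$ in $\rmH^2(G)\simeq\Lambda_2(\Gamma^\ast)$ (the last since $\{u,v\}\notin|\calE|$), so the necessary cup-product vanishings \eqref{eq:cup triv} hold automatically; note also $q=p^f\geq3$. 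The point of putting $\alpha$ at \emph{both} ends and $\beta=u^\ast$ in between is that then any realizing $\rho$ must send $u$ to a matrix with all-$1$ super-diagonal (hence, up to conjugation, to the single Jordan block) and $v$ to one with the ``split'' super-diagonal $(1,0,\dots,0,1)$.

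For definedness I would simply write the homomorphism down: send $u$ to the single Jordan block $A$ of \eqref{eq:matrix A}, $w$ to $I_{q+1}$, and $v$ to $I_{q+1}+E_{1,2}+E_{q,q+1}$. Because $q=p^f$, Lucas's congruence gives $A^q=I_{q+1}+E_{1,q+1}\in\Zen(\dbU_{q+1})$; and $(E_{1,2}+E_{q,q+1})^2=0$ since $q\geq3$, so $(I_{q+1}+E_{1,2}+E_{q,q+1})^q=I_{q+1}$. Hence both relators $[w,u]u^{-q}$ and $[w,v]v^{-q}$ land in $\Zen(\dbU_{q+1})$, so the assignment descends to a homomorphism $\bar\rho\colon G\to\bar\dbU_{q+1}$, whose super-diagonal entries are exactly $\alpha_1,\dots,\alpha_q$.

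For non-vanishing I would argue by contradiction, assuming $\rho\colon G\to\dbU_{q+1}$ realizes $\alpha_1,\dots,\alpha_q$. Since $\rho_{i,i+1}(u)=1$ for all $i$, Lemma~\ref{lemma:Jordan} lets me take $\rho(u)=A$. Then $wuw^{-1}=u^{1+q}$ reads $[\rho(w),A]=A^q=I_{q+1}+E_{1,q+1}$, a central element, so Lemma~\ref{lemma:comm matrices diagonals} puts $\rho(w)$ into the constant-diagonal shape \eqref{eq:matrix B}, with $b_1=0$ because $\rho(w)_{i,i+1}=\alpha_i(w)=0$. Next, writing $\rho(v)=I_{q+1}+N_v$, the super-diagonal of $N_v$ is $(1,0,\dots,0,1)$; since $q\geq3$ the only length-$q$ chain from $1$ to $q+1$ passes through the vanishing entry $(2,3)$, so $N_v^q=0$, and with $\binom{q}{k}\equiv0\pmod p$ for $0<k<q$ this gives $\rho(v)^q=I_{q+1}$. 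Hence $wvw^{-1}=v^{1+q}$ collapses to $[\rho(v),\rho(w)]=I_{q+1}$, and feeding $B=\rho(w)$ (of form \eqref{eq:matrix B}) and $C=\rho(v)$ (whose super-diagonal is the one required in \eqref{eq:matrix C}) into Lemma~\ref{lem: B C comm} forces $b_1=\dots=b_{q-2}=0$, i.e.\ $\rho(w)=I_{q+1}+c_1E_{1,q}+c_2E_{2,q+1}+c_3E_{1,q+1}$. Two short matrix computations then finish: expanding $\rho(w)A$ against $A\rho(w)$ gives $[\rho(w),A]=I_{q+1}+(c_1-c_2)E_{1,q+1}$, so the $u$-relation forces $c_1-c_2=1$; whereas $\rho(w)\rho(v)=\rho(v)\rho(w)$, expanded using $\rho(v)_{1,2}=\rho(v)_{q,q+1}=1$, forces $c_1=c_2$ --- a contradiction. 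So no such $\rho$ exists, the Massey product does not vanish, and being defined it is essential.

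The main obstacle will be the non-vanishing half: orchestrating Lemmata~\ref{lemma:Jordan}, \ref{lemma:comm matrices diagonals} and \ref{lem: B C comm} in the right order to whittle $\rho(w)$ down to just its three corner entries, and then pinning down that it is precisely the presence of $v^\ast$ at \emph{both} endpoints (equivalently $\rho(v)_{1,2}=\rho(v)_{q,q+1}=1$) that produces the incompatible constraints $c_1-c_2=1$ and $c_1=c_2$. A minor point to check along the way is that the proof of Lemma~\ref{lem: B C comm} uses only the super-diagonal of $C$, so it applies to $C=\rho(v)$ even though the remaining entries of $\rho(v)$ need not be literally of the shape \eqref{eq:matrix C}.
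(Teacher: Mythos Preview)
Your argument is correct and follows the paper's strategy essentially line for line: produce $\bar\rho$ by sending $u\mapsto A$, $v\mapsto I+E_{1,2}+E_{q,q+1}$, $w\mapsto I$ to show definedness, then for non-vanishing conjugate so that $\rho(u)=A$, apply Lemma~\ref{lemma:comm matrices diagonals} via $[B,A]=A^q\in\Zen(\dbU_{q+1})$ to put $B=\rho(w)$ in shape~\eqref{eq:matrix B}, apply Lemma~\ref{lem: B C comm} via $[C,B]\in\Zen(\dbU_{q+1})$ to kill $b_1,\dots,b_{q-2}$, and derive the incompatible constraints $b_{1,q}-b_{2,q+1}=1$ from the $u$-relation and $b_{1,q}-b_{2,q+1}=0$ from the $v$-relation. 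Your remark that the proof of Lemma~\ref{lem: B C comm} uses only the super-diagonal of $C$ is correct and in fact needed, since the paper simply asserts ``$C$ is as in Lemma~\ref{lem: B C comm}'' without justifying why the off-super-diagonal entries of $\rho(v)$ should vanish.

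There is one genuine omission: you write the presentation as $G=\langle u,v,w\mid wuw^{-1}=u^{1+q},\ wvw^{-1}=v^{1+q}\rangle$, which is only the \emph{left} digraph in~\eqref{eq:nospecialclique 1}. The right digraph has $(w,v)\in\calE$ (and possibly $(v,w)\in\calE$), so the second relation is $[w,v]=1$ or $[w,v]=w^{-q}$ rather than $[w,v]=v^q$; cf.\ the paper's case split~\eqref{eq:rel nospecialclique case1}. Fortunately your computations survive unchanged: for definedness, $w\mapsto I$ makes every version of the $w$-$v$ relator land in $\Zen(\dbU_{q+1})$; and for non-vanishing, since $\rho(w)$ has zero super-diagonal one has $\rho(w)^q=I_{q+1}$, so in each subcase the $w$-$v$ relation still yields $[\rho(v),\rho(w)]=I_{q+1}$, and your application of Lemma~\ref{lem: B C comm} and the final contradiction go through verbatim. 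You should add a sentence covering these remaining cases.
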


\begin{proof}
First observe that $\alpha\smallsmile\beta=v^\ast\smallsmile u^\ast=0$, as $u,v$ are disjoint, while clearly $\beta\smallsmile\beta=0$.

The vertices $u,v,w$ are subject to the relations $r_{u,w}$ and $r_{v,w}$, which are
 \begin{equation}\label{eq:rel nospecialclique case1}
  [w,u]=u^q\qquad\text{and}\qquad [w,v]=  \begin{cases}
  v^q &\text{if } (v,w)\text{is a directed edge} \\ 1 &\text{if } (v,w),(w,v)\in\calE, \\
   w^{-q} &\text{if } (w,v)\text{is a directed edge}.
                                        \end{cases} \end{equation}
To show that the $q$-fold Massey product \eqref{eq:masseyprod nospecialclique case1} is defined, we produce a homomorphism $\bar\rho\colon G\to \bar\dbU_{q+1}$ satisfying $\bar\rho_{1,2}=\rho_{q,q+1}=\alpha$ and $\bar\rho_{i,i+1}=\beta$ for $i=2,\ldots,q-1$.
Take the matrices $A,C\in\dbU_{q+1}$, with $A$ as in \eqref{eq:matrix A}, and
\[
 C=\left(\begin{array}{ccccc} 1&1&0&\cdots &0\\&1&0&\ddots& \vdots \\&&\ddots&0&0 \\&&&1&1 \\ &&&&1  \end{array}
\right).
\]
Then $A^q=I_{q+1}+E_{1,q+1}$ and $C^q=I_{q+1}$.
The assignment
$$u\mapsto A\cdot \Zen(\dbU_{q+1}),\qquad v\mapsto C\cdot\Zen(\dbU_{q+1}), \qquad w,w'\mapsto I_{q+1}\cdot\Zen(\dbU_{q+1})$$ for any $w'\in\calV$, $w'\neq u,v$,
induces a
homomorphism $\bar\rho\colon G\to \bar\dbU_{q+1}$: indeed one has
\[
 [I_{q+1},A]=I_{q+1}\equiv A^q\bmod \Zen(\dbU_{q+1})\qquad\text{and}\qquad
 [I_{q+1},C]=I_{q+1}=C^q=I_{q+1}^{-q},
\]
as prescribed by \eqref{eq:rel nospecialclique case1}; and moreover
\[
 [\bar\rho(w'),\bar\rho(z)]=I_{q+1}=C^q\equiv A^q\mod \Zen(\dbU_{q+1})
\]
for $z=u,v,w$, so that the relation $r_{w',z}$, occurring whenever a vertex $w'$ is joined to one of $u,v,w$, is satisfied.
Altogether, the $q$-fold Massey product \eqref{eq:masseyprod nospecialclique case1} is defined by Proposition~\ref{prop:unipotent representation}--(i).

Now suppose there exists a homomorphism $\rho\colon G\to \dbU_{q+1}$ satisfying $\rho_{1,2}=\rho_{q,q+1}=\alpha$ and $\rho_{i,i+1}=\beta$ for $i=2,\ldots,q-1$.
By Lemma~\ref{lemma:Jordan}, we may suppose that $\rho(u)=A$, with $A$ as above --- and as in \eqref{eq:matrix A}.
Put $B=\rho(w)$ and $C=\rho(v)$.
Then the entries of the 1st upper diagonal of $B$ are 0, while $C$ is as in Lemma~\ref{lem: B C comm}.
In particular,
\[
 B^q=C^q=I_{q+1}.
\]
By \eqref{eq:rel nospecialclique case1}, one has $[B,A]=A^q\in\Zen(\dbU_{q+1})$, so that Lemma~\ref{lemma:comm matrices diagonals} implies that $B$ as in \eqref{eq:matrix B}.
Moreover, by \eqref{eq:rel nospecialclique case1}, $[B,C]\in\Zen(\dbU_{q+1})$, and thus Lemma~\ref{lem: B C comm} implies that $b_2=\ldots=b_{q-2}=0$ --- namely, the non-0 entries of $B$ are concentrated in the $(q-1)$th and in the $q$th upper diagonals (and in the main diagonal, of course).
Hence, one computes
\[ [B,A]=[B,C]=I_{q+1}+(b_{1,q}-b_{2,q+1})E_{1,q+1},\]
but by the former commutator should be equal to $A^q=I_{q+1}+E_{1,1+q}$, while the latter should be equal to $I_{q+1}$, a contradiction.
Therefore, a homomorphism $\rho\colon G\to\dbU_{q+1}$ with the prescribed properties cannot exist,
and thus the $q$-fold Massey product  \eqref{eq:masseyprod nospecialclique case1} does not vanish by Proposition~\ref{prop:unipotent representation}--(ii).
\end{proof}

\begin{rem}\rm
Let $G$ be the oriented pro-$p$ RAAG as in Proposition~\ref{prop:nomassey nospecialstar1}.
By \cite[Prop.~5.4, Prop.~6.5]{BQW}, $G$ cannot occur as the maximal pro-$p$ Galois group of a field containing a root of 1 of order $p$.
If $p=q=3$, then $G$ yields essential 3-fold Massey products, and thus by E.~Matzri's result \cite{eli:Massey} (cf. \S~\ref{ssec:frame}), Proposition~\ref{prop:nomassey nospecialstar1} provides an alternative proof of the fact that, in this case, $G$ cannot occur as the maximal pro-3 Galois group of a field containing a root of 1 of order 3.
\end{rem}


\subsubsection{Second case: induced subdigraph of type~\eqref{eq:nospecialclique 2}}

\begin{prop}\label{prop:nomassey nospecialstar2}
Let $\Gamma=(\calV,\calE)$ be a digraph as in \eqref{eq:nospecialclique 2}, and let $G$ be the oriented pro-$p$ RAAG associated to $\Gamma$ and to $q$.
Then the $q$-fold Massey product
\begin{equation}\label{eq:masseyprod nospecialclique case1}
  \langle\underbrace{\alpha,\ldots,\alpha}_{q\text{times}}\rangle,
\end{equation}
with $\alpha=u^\ast+v^\ast\in\rmH^1(G)$, is essential.
\end{prop}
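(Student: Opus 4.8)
The plan is to argue in parallel with Proposition~\ref{prop:nomassey nospecialstar1}, using Dwyer's translation (Proposition~\ref{prop:unipotent representation}) and Lemma~\ref{lemma:Jordan}, but to extract the obstruction through a centralizer argument rather than through Lemma~\ref{lem: B C comm}. First note that $\alpha\smallsmile\alpha=(u^\ast+v^\ast)\smallsmile(u^\ast+v^\ast)=0$ by graded-commutativity, so the sequence $\alpha,\ldots,\alpha$ satisfies condition~\eqref{eq:cup triv}. To show that the $q$-fold Massey product $\langle\alpha,\ldots,\alpha\rangle$ is \emph{defined}, I would exhibit a homomorphism $\bar\rho\colon G\to\bar\dbU_{q+1}$ with $\bar\rho_{i,i+1}=\alpha$ for every $i=1,\ldots,q$: send both $u$ and $v$ to $A\cdot\Zen(\dbU_{q+1})$, with $A$ the matrix in~\eqref{eq:matrix A} (so that $A^q=I_{q+1}+E_{1,q+1}\in\Zen(\dbU_{q+1})$, and $A^q$ becomes trivial in $\bar\dbU_{q+1}$), and send every other vertex of $\Gamma$ to the identity. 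Since the identity is central and all $q$-th powers occurring in the relations $r_{v,w}$ of $G$ become trivial in $\bar\dbU_{q+1}$, one verifies that $\bar\rho$ respects every defining relation of $G$, and Proposition~\ref{prop:unipotent representation}(i) applies.

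For the \emph{non-vanishing}, suppose towards a contradiction that there is a homomorphism $\rho\colon G\to\dbU_{q+1}$ with $\rho_{i,i+1}=\alpha$ for every $i$. Since $\rho_{i,i+1}(u)=\alpha(u)=1$ for all $i$, Lemma~\ref{lemma:Jordan} lets me assume $\rho(u)=A$ as in~\eqref{eq:matrix A}; put $B=\rho(w)$, $C=\rho(v)$ and $\nu=C-I_{q+1}$. From $\rho_{i,i+1}(v)=1$ for all $i$, the first superdiagonal of $C$ consists of ones, so $\nu$ is a \emph{regular} nilpotent matrix, $\nu^q=E_{1,q+1}$, and the centralizer of $C$ in $\mathrm{M}_{q+1}(\F_p)$ equals $\F_p[C]$. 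From $\rho_{i,i+1}(w)=0$ for all $i$, the first superdiagonal of $B$ vanishes, so $(B-I_{q+1})^q=0$ and hence $B^q=I_{q+1}$. The directed edge from $u$ to $w$ yields the relation $wuw^{-1}=u^{1+q}$, whence $[B,A]=A^q=I_{q+1}+E_{1,q+1}\neq I_{q+1}$; the edge joining $w$ and $v$ --- be it undirected or directed from $w$ to $v$ --- yields, using $B^q=I_{q+1}$, that $B$ and $C$ commute. Consequently $B\in\F_p[C]$, and since the first superdiagonal of $B$ is zero we may write $B=I_{q+1}+\sum_{k\ge2}\mu_k\nu^k$ with $\mu_k\in\F_p$.

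The crux is the edge joining $u$ and $v$: whichever of the three possibilities occurs --- $\{u,v\}$ undirected, directed from $u$ to $v$, or directed from $v$ to $u$ --- the associated relation forces $[A,C]\in\Zen(\dbU_{q+1})$, say $[A,C]=z=I_{q+1}+\epsilon E_{1,q+1}$ with $\epsilon\in\F_p$, so that $ACA^{-1}=zC$ and $A\nu A^{-1}=z\nu+\epsilon E_{1,q+1}$. Because $E_{1,q+1}$ is central, $E_{1,q+1}^2=0$ and $\nu E_{1,q+1}=E_{1,q+1}\nu=0$, one obtains $A\nu^kA^{-1}=z^k\nu^k=\nu^k$ for every $k\ge2$; since $B$ has trivial $\nu$-linear part, this gives $ABA^{-1}=B$, that is $[B,A]=I_{q+1}$, contradicting $[B,A]=I_{q+1}+E_{1,q+1}$. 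Hence no such $\rho$ exists, so by Proposition~\ref{prop:unipotent representation}(ii) the $q$-fold Massey product does not vanish; being defined as well, it is essential.

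The point I expect to be delicate is the bookkeeping in the last two paragraphs: one must check that the single identity ``$A$ conjugates $\nu^k$ to itself for $k\ge2$'' simultaneously handles the three edge-types between $u$ and $v$ and the two edge-types between $w$ and $v$, and one must keep track of which central element each relation produces. A brief inspection of the small cases is also needed --- recall $q=p^f\ge3$, with $q\ge4$ when $p=2$ --- to secure $(B-I_{q+1})^q=0$ and the vanishing of the products involving $E_{1,q+1}$.
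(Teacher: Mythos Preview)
Your argument is correct. The definedness part matches the paper's proof essentially verbatim. For non-vanishing you take a genuinely different and more conceptual route than the paper.

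The paper proceeds as follows: after normalizing $\rho(u)=A$ via Lemma~\ref{lemma:Jordan}, it notes that $[B,A]$ and $[A,C]$ lie in $\Zen(\dbU_{q+1})$ and invokes Lemma~\ref{lemma:comm matrices diagonals} to force both $B$ and $C$ to have constant superdiagonals (the shape~\eqref{eq:matrix B}). It then computes by hand that the $(1,q+1)$-entry of $[B,A]$ is $b_{1,q}-b_{2,q+1}$, which must equal $1$, while the $(1,q+1)$-entry of $[C,B]$ is $b_{2,q+1}-b_{1,q}$, which must equal $0$; this gives the contradiction. So the paper's obstruction is extracted by an explicit entry comparison, using Lemma~\ref{lemma:comm matrices diagonals} as the structural input.

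Your approach sidesteps Lemma~\ref{lemma:comm matrices diagonals} entirely. You exploit instead that $\nu=C-I$ is \emph{regular} nilpotent (since $\nu^q=E_{1,q+1}\neq0$), so the centralizer of $C$ in $\mathrm{M}_{q+1}(\F_p)$ is $\F_p[\nu]$; the commutation $[B,C]=I$ (forced by either edge-type between $w$ and $v$, using $B^q=I$) then gives $B\in\F_p[\nu]$ with vanishing $\nu$-linear coefficient. The key observation $A\nu^kA^{-1}=\nu^k$ for $k\geq2$, which follows cleanly from $\nu E_{1,q+1}=E_{1,q+1}\nu=E_{1,q+1}^2=0$, immediately yields $[B,A]=I$, contradicting $[B,A]=A^q$. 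This is shorter, avoids all coordinate bookkeeping, and makes the source of the obstruction transparent: $B$ is forced to lie in the ``higher'' part of the centralizer of $C$, which $A$ also fixes. One minor point: your intermediate expression ``$A\nu^kA^{-1}=z^k\nu^k$'' is correct but could be misread; it would be cleaner to write $(\nu+\epsilon E_{1,q+1})^k=\nu^k$ directly for $k\geq2$.
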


\begin{proof}
 The vertices $u,v,w$ are subject to the three relations $r_{u,v}$, $r_{v,w}$, $r_{w,u}$, which are
 \begin{equation}\label{eq:rel nospecialclique case2}
 \begin{split}
  [w,u]&=u^q,\\
  [v,w]&=\begin{cases} 1 &\text{if } (v,w),(w,v)\in\calE, \\
  w^q &\text{if } (w,v)\text{is a directed edge}, \end{cases}\\
  [v,u]&=\begin{cases} 1 &\text{if } (v,u),(u,v)\in\calE, \\
  u^q &\text{if } (u,v)\text{is a directed edge},\\
  v^{-q} &\text{if } (v,u)\text{is a directed edge}.\end{cases}
   \end{split}
 \end{equation}
Put $\alpha=u^\ast+v^\ast\in\rmH^1(G)$.
Clearly $\alpha\smallsmile\alpha=0$.
We claim that the $q$-fold Massey product
\begin{equation}\label{eq:masseyprod nospecialclique case2}
  \langle\underbrace{\alpha,\ldots,\alpha}_{q\text{times}}\rangle
\end{equation}
is defined but does not vanish.

To show that the $q$-fold Massey product \eqref{eq:masseyprod nospecialclique case2} is defined, we produce a homomorphism $\bar\rho\colon G\to \bar\dbU_{q+1}$ satisfying $\bar\rho_{i,i+1}=\alpha$ for $i=1,\ldots,q$.
Take the matrix $A\in\dbU_{q+1}$ as in \eqref{eq:matrix A}.
Then $A^q=I_{q+1}+E_{1,q+1}$.
The assignment
$$u,v\mapsto A\cdot \Zen(\dbU_{q+1}), \qquad w,w'\mapsto I_{q+1}\cdot\Zen(\dbU_{q+1})$$ for $w'\in\calV, w'\neq u,v$, induces a homomorphism $\bar\rho\colon G\to \bar\dbU_{q+1}$: indeed one has
\[
 [I_{q+1},A]=I_{q+1}\equiv A^q\bmod \Zen(\dbU_{q+1})\qquad\text{and}\qquad
 [A,A]=I_{q+1}\equiv A^{\pm q}\bmod \Zen(\dbU_{q+1}),
\]
as prescribed by \eqref{eq:rel nospecialclique case2}; and moreover
\[   [\rho(w'),\rho(z)] =I_{q+1}\equiv A^q\mod\Zen(\dbU_{q+1}),\]
  so that the relation $r_{w',z}$, occurring whenever any vertex $w'$ is joined to one of $z=u,v,w$, is satisfied.
Hence, the $q$-fold Massey product \eqref{eq:masseyprod nospecialclique case2} is defined
by Proposition~\ref{prop:unipotent representation}--(i).

Now suppose there exists a homomorphism $\rho\colon G\to \dbU_{q+1}$ satisfying $\rho_{i,i+1}=\alpha$ for $i=1,\ldots,q$.
By Lemma~\ref{lemma:Jordan}, we may suppose that $\rho(u)=A$, with $A$ as above --- and as in \eqref{eq:matrix A}.
Put $B=\rho(w)$ and $C=\rho(v)$.
Then the entries of the 1st upper diagonal of $B$ are 0, while all entries of the 1st upper diagonal of $C$ are equal to 1.
In particular,
\[
 B^q=I_{q+1}\qquad\text{and}\qquad C^q=A^q=I_{q+1}+E_{1,q+1}.
\]
By \eqref{eq:rel nospecialclique case2}, one has $[B,A],[A,C]\in\Zen(\dbU_{q+1})$, so that Lemma~\ref{lemma:comm matrices diagonals} implies that $B,C$ are as in \eqref{eq:matrix B} (where we call $c_i$, $i=2,\ldots,q-2$, the entries of the $i$th upper diagonal of $C$).
Then one computes
\[
 [B,A]=I_{q+1}+(b_{1,q}-b_{2,q+1})E_{1,q+1},
\]
and $[B,A]=A^q=I_{q+1}+E_{1,q+1}$ implies $b_{1,q}-b_{2,q+1}=1$.
On the other hand, the $(1,q+1)$-entry of $[C,B]$ is
\[\begin{split}
 &\left(b_{1,q+1}+b_{2,q+1}+b_{q-2}c_2+\ldots+b_2c_{q-2}+0+c_{1,q+1}\right)-\\
&\qquad  -\left(c_{1,1+q}+0+b_2c_{q-2}+\ldots+b_{q-2}c_2+b_{1,q}+b_{1,q+1}\right)=b_{2,q+1}-b_{1,q},
\end{split}\]
and $[C,B]=B^{\epsilon q}=I_{q+1}$ (with $\epsilon=0,1$ depending on whether $(v,w)$ is an edge) implies $b_{2,q+1}-b_{1,q}=0$, a contradiction.
Therefore, a homomorphism $\rho\colon G\to\dbU_{q+1}$ with the prescribed properties cannot exist,
and thus the $q$-fold Massey product  \eqref{eq:masseyprod nospecialclique case2} does not vanish by Proposition~\ref{prop:unipotent representation}--(ii).
\end{proof}

\subsection{Special-clique digraphs and Massey products}\label{ssec:ii}

Here we prove that condition~(iii) in Theorem~\ref{thm:main intro} implies condition~(ii).
We will proceed as follows: given a sequence $\alpha_1,\ldots,\alpha_n$ of elements of $\rmH^1(G,\F_p)$ satisfying \eqref{eq:cup triv}, we will construct explicitly a homomorphism $\rho\colon G\to\dbU_{n+1}$ such that $\rho_{i,i+1}=\alpha_i$ for every $i=1,\ldots,n$, so that the $n$-fold Massey product $\langle\alpha_1,\ldots,\alpha_n\rangle$ contains 0 by Proposition~\ref{prop:special clique inclusion}--(ii).
At this aim, we need the following lemma, which is a consequence of \cite[Prop.~2.10]{cq:massey}.

\begin{lem}\label{lem:comm qpower}
 For $a\in\F_p$, $a\neq0$, let $A\in\dbU_{n+1}$ be a matrix whose non-0 entries are concentrated in the main diagonal and in the 1st upper diagonal --- namely,
 \[
  A=I_{n+1}+a_{1,2}E_{1,2}+a_{2,3}E_{2,3}+\ldots+a_{n,n+1}E_{n,n+1}.
 \]
Then there exists a matrix $B\in\dbU_{n+1}$ whose entries in the 1st upper diagonal are 0 and such that $[B,A]=A^q$.
\end{lem}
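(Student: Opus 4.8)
The plan is to recast the commutator equation as a conjugacy statement and then solve it one upper diagonal at a time. First I would write $A=I_{n+1}+N$ with $N=a_{1,2}E_{1,2}+\dots+a_{n,n+1}E_{n,n+1}$ supported on the first upper diagonal, so that $N^{k}$ is supported on the $k$-th upper diagonal and $N^{n+1}=0$. Since $q=p^{f}$, every binomial coefficient $\binom{q}{k}$ with $0<k<q$ vanishes in $\F_p$, whence $A^{q}=(I_{n+1}+N)^{q}=I_{n+1}+N^{q}$ in $\GL_{n+1}(\F_p)$; in particular if $q\ge n+1$ then $A^{q}=I_{n+1}$ and $B=I_{n+1}$ already works, so I may assume $q\le n$. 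Then $[B,A]=A^{q}$ is equivalent to $BAB^{-1}=A^{q}A=A^{q+1}=I_{n+1}+N+N^{q}+N^{q+1}$, i.e.\ to asking that $B$ conjugate $A$ into $A^{q+1}$.

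Next I would reduce to the case in which all the entries $a_{i,i+1}$ are nonzero, so that $N$ is a regular nilpotent. If $a_{i,i+1}=0$ for some $i$, then the splitting $\F_p^{\,n+1}=\langle e_{1},\dots,e_{i}\rangle\oplus\langle e_{i+1},\dots,e_{n+1}\rangle$ is $N$-invariant, so $A=A_{1}\oplus A_{2}$ is block diagonal with $A_{1}\in\dbU_{i}$ and $A_{2}\in\dbU_{n+1-i}$; solving $[B_{j},A_{j}]=A_{j}^{q}$ in each block with $B_{j}$ of vanishing first upper diagonal, by induction on $n$, and setting $B=B_{1}\oplus B_{2}$ — whose $(i,i+1)$-entry is $0$ since it sits between the two blocks — settles this case.

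Assume now all $a_{i,i+1}\ne 0$. Writing $B=I_{n+1}+M$ with $M$ strictly upper triangular and with vanishing first upper diagonal, and decomposing $M=\sum_{d\ge 2}M^{(d)}$ into its components on the successive upper diagonals, a direct computation turns $BAB^{-1}=A^{q+1}$ into the identity $MN-NM=N^{q}+N^{q+1}+N^{q}M+N^{q+1}M$. The operator $M^{(d)}\mapsto M^{(d)}N-NM^{(d)}$ carries the $d$-th upper diagonal to the $(d+1)$-st, and in the regular case it is surjective: it is the weighted difference $(m_{i})_{i}\mapsto(a_{i+d}m_{i}-a_{i}m_{i+1})_{i}$, which one inverts recursively because the $a_{i+d}$ are units. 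Comparing the two sides of the identity upper diagonal by upper diagonal, I would take $M^{(d)}=0$ on the diagonals $2,\dots,q-1$, solve $M^{(q-1)}N-NM^{(q-1)}=N^{q}$ and $M^{(q)}N-NM^{(q)}=N^{q+1}$ on the $q$-th and $(q+1)$-st, and for $e\ge q+2$ solve $M^{(e-1)}N-NM^{(e-1)}=N^{q}M^{(e-q)}+N^{q+1}M^{(e-q-1)}$, whose right-hand side involves only the components $M^{(d)}$ with $d\le e-2$ already constructed and lies in the (full) image of the above surjection. Iterating up to $d=n$ determines $M$, and $B=I_{n+1}+M\in\dbU_{n+1}$ has vanishing first upper diagonal and satisfies $[B,A]=A^{q}$.

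I expect the only genuine work to be the bookkeeping in the last step: identifying which upper diagonals of $N^{q}M$ and $N^{q+1}M$ feed into which equation, so that the recursion for the $M^{(d)}$ is well-founded, together with the (elementary) surjectivity of the difference operator in the regular case. There should be no real obstruction, only careful indexing. Alternatively, once the matrix equation has been brought into the normal form of the preceding lemma, the statement can be deduced directly from \cite[Prop.~2.10]{cq:massey}.
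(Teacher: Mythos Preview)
Your argument is correct and follows the same architecture as the paper's proof: reduce, via the obvious block-diagonal splitting at the zero entries of the first superdiagonal, to the case where $N$ is a regular nilpotent, and then handle that case separately. The only difference is in this last step: the paper simply invokes \cite[Prop.~2.10]{cq:massey} for each regular block (which you also mention as an alternative), whereas you supply a self-contained recursive construction of $M$ by solving $MN-NM=N^{q}+N^{q+1}+N^{q}M+N^{q+1}M$ one superdiagonal at a time. Your recursion is well-founded because $q\ge 3$ under the paper's standing hypothesis $f\ge 2$ when $p=2$, so the right-hand side on the $e$-th diagonal only involves $M^{(d)}$ with $d\le e-2$; and the surjectivity of $M^{(d)}\mapsto M^{(d)}N-NM^{(d)}$ in the regular case is exactly the elementary computation you indicate. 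One small wording quibble: when you write ``take $M^{(d)}=0$ on the diagonals $2,\dots,q-1$'' you of course mean $M^{(2)}=\dots=M^{(q-2)}=0$, since you then go on to solve for $M^{(q-1)}$.
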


\begin{proof}
First, we write $A$ as a block diagonal matrix,
\[
 A=\left(\begin{array}{cccc} A_1 &&& \\ &A_2&& \\ &&\ddots& \\ &&&A_r\end{array}
\right)
\]
where for each $h=1,\ldots,r$ one has $A_h\in\dbU_{m_h+1}$ for some $m_h\geq0$, and either $A_h=I_{m_h+1}$, or the entries of the 1st upper diagonal of $A_h$ are all not 0.
For each block, one has either $A_h^q=I_{m_h+1}$, if $A_h=I_{m_h+1}$ or $m_h<q$; or the non-0 entries of $A_h^q$ are concentrated in the main diagonal and in the $q$th upper diagonal --- namely,
\[
 A_h^q=I_{m_h+1}+a(E_{1,1+q}+\ldots+E_{m_h+1-q,m_h+1})=
 \left(\begin{array}{ccccc}  1&\cdots&\ast&& 0\\ &1&0&\ddots & \\ &&\ddots&0&\ast \\ &&&1&\vdots \\ &&&&1 \end{array}
\right),
\]
if the entries of the 1st upper diagonal of $A_h$ are all not 0 and $m_h\geq q$.
Altogether, one has
\[
 A^q=\left(\begin{array}{cccc} A_1^q &&& \\ &A_2^q&& \\ &&\ddots& \\ &&&A_r^q\end{array}
\right).
\]

Now, if $A_h$ is a block whose entries in the 1st upper diagonal of $A_h$ are not 0, and $m_h\geq q$, by \cite[Prop.~2.10]{cq:massey} there exists a matrix $B_h\in\dbU_{m_h+1}$, whose entries in the 1st diagonal are 0, such that $[B_h,A_h]=A_h^q$.
If instead $A_h^q=I_{m_h+1}$, we put $B_h=I_{m_h+1}$, so that $[B_h,A_h]=I_{m_h+1}=A_h^q$ anyway.

Finally, let $B\in\dbU_{n+1}$ be the diagonal block matrix whose blocks are $B_1,\ldots,B_r$.
Then
\[
\begin{split}
 [B,A] &=\left(\begin{array}{cccc} [B_1,A_1] &&& \\ &[B_2,A_2]&& \\ &&\ddots& \\ &&&[B_r,A_r]\end{array}\right) \\
&= \left(\begin{array}{cccc} A_1^q &&& \\ &A_2^q&& \\ &&\ddots& \\ &&&A_r^q\end{array}
\right)=A^q.\qedhere
\end{split}\]
\end{proof}

\begin{prop}\label{thm:massey specialclique}
 Let $\Gamma=(\calV,\calE)$ be a special-clique special digraph, and let $G$ be the oriented pro-$p$ RAAG associated to $\Gamma$ and to $q$.
 Then $G$ satisfies the strong $n$-Massey vanishing property for every $n\geq3$.
\end{prop}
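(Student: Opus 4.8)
The plan is to prove the statement by induction on the number of special vertices of $\Gamma$, exploiting the patching decomposition of a special-clique digraph described at the end of \S~\ref{ssec:patching}. The base case, where $\Gamma$ has no special vertices, means $\Gamma$ is an undigraph, hence $G$ is a pro-$p$ RAAG, and by Lorensen's theorem $\bfH^\bullet(G)\simeq\bfLam_\bullet(\Gamma^\ast)$ is quadratic; groups with quadratic $\F_p$-cohomology are known (or easily seen, via Proposition~\ref{prop:unipotent representation}) to satisfy the strong $n$-Massey vanishing property for all $n$. For the inductive step, I would write $\Gamma$ as the patching of $\mathrm{St}(w)$ and $\Gamma'=(\calV\smallsetminus\{w\},\calE')$ along the clique $\Delta$ with vertices $\calV_w\smallsetminus\{w\}$, where $w$ is a chosen special vertex; then $\Gamma'$ is special-clique with one fewer special vertex, and by Theorem~\ref{thm:quadratic in text} together with Lemma~\ref{prop:amalg} we have $G=G_w\amalg_{G_\Delta}G_{\Gamma'}$ a proper amalgam.

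\textbf{The direct, hands-on approach.} Rather than lifting a Mayer--Vietoris/amalgam vanishing statement for Massey products, I would follow the strategy announced at the start of \S~\ref{ssec:ii}: given $\alpha_1,\ldots,\alpha_n\in\rmH^1(G)$ with $\alpha_1\smallsmile\alpha_2=\cdots=\alpha_{n-1}\smallsmile\alpha_n=0$, construct a homomorphism $\rho\colon G\to\dbU_{n+1}$ with $\rho_{i,i+1}=\alpha_i$. Write each $\alpha_i = \sum_{v\in\calV} a_{i,v}\,v^\ast$. Declare $\rho(v)$ for each vertex $v$: on the ordinary vertices (i.e.\ those in $\Gamma_0$, the maximal undigraph part) we already know, by the quadraticity of $\bfH^\bullet(G_{\Gamma_0})$ together with Proposition~\ref{prop:unipotent representation}, that a compatible representation of $G_{\Gamma_0}$ into $\dbU_{n+1}$ exists realizing the restrictions of the $\alpha_i$; this handles all relations among ordinary vertices. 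It remains to define $\rho(w)$ for each special vertex $w$ so as to satisfy the oriented relations $w\,u\,w^{-1}=u^{1+q}$ for $u\in\calV_w\smallsetminus\{w\}$, i.e.\ $[\rho(w),\rho(u)]=\rho(u)^q$, while being compatible with the cup-product vanishing hypotheses that involve $w$.

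\textbf{The key local computation.} Fix a special vertex $w$ and consider the matrix $A_w=\rho(w)$ we must build. Because $w$ is a sinkhole (condition (a) of special-clique) and the initial vertices of edges into $w$ are pairwise joined (condition (b)), the subgroup $G_{\mathrm{St}(w)}$ is the oriented pro-$p$ RAAG of a complete special digraph, namely $G_{\mathrm{St}(w)}\simeq\Z_p^{\,d}\rtimes\Z_p$ as in Example~\ref{ex:complete RAAGs}, where the $\Z_p^{\,d}$ factor is $G_\Delta$. The hypothesis $\alpha_{i-1}\smallsmile\alpha_i=0$ forces, after restriction to $G_{\mathrm{St}(w)}$, a strong constraint on consecutive entries; combined with the structure of $\rho|_{G_\Delta}$, this should make $\rho(u)^q$ a matrix whose nonzero off-diagonal entries lie in the $q$th upper diagonal, and the problem reduces to finding $A_w\in\dbU_{n+1}$, \emph{with vanishing first upper diagonal} (since $w^\ast$ never appears as any $\alpha_i$ restricted to $\mathrm{St}(w)$ when $w$ is a sinkhole that is not an origin of an undirected edge joining two $\alpha$-supports — here one must be slightly careful and may need $w^\ast$ to appear, which is where Lemma~\ref{lemma:Jordan} and the last lemma of \S~\ref{ssec:lemmata} enter), such that $[A_w,\rho(u)]=\rho(u)^q$ simultaneously for all $u\in\calV_\Delta$. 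This is exactly the content of Lemma~\ref{lem:comm qpower} (and its refinement, the final unnamed lemma of \S~\ref{ssec:lemmata}) applied to $A=\rho(u)$: since the $\rho(u)$ for $u\in\calV_\Delta$ commute and can be simultaneously block-diagonalized (they generate an abelian subgroup), one produces a single $B=A_w$ with zero first upper diagonal conjugating-by-commutator each $\rho(u)$ to $\rho(u)^q$.

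\textbf{The main obstacle} I anticipate is the compatibility of the local choices at different special vertices with each other and with the ambient representation of $G_{\Gamma_0}$: a priori, realizing the cup-product-vanishing conditions that mix a special vertex $w$ with ordinary vertices, or with a second special vertex, could clash. Here the combinatorics of special-clique digraphs is essential — distinct special vertices are disjoint (Remark~\ref{rem:special}(b)), so there are no relations directly between them, and the amalgam decomposition $\tilde G=G_w\amalg_{G_\Delta}G_{\Gamma'}\simeq G$ (Theorem~\ref{thm:quadratic in text}) lets one build $\rho$ by extending a representation of $G_{\Gamma'}$ (available by the inductive hypothesis, realizing the restricted $\alpha_i$) across the amalgam using only the local datum at $w$, via the universal property of the pushout. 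Checking that the restriction of the inductively-constructed $\rho_{\Gamma'}$ to $G_\Delta$ is of the form required by Lemma~\ref{lem:comm qpower} (nonzero entries controlled, first upper diagonal handled) so that it glues with $A_w$ is the delicate bookkeeping step, and is where I expect most of the real work to lie.
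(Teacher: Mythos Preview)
There are two genuine gaps.

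First, the base case is not correct as stated: quadratic $\F_p$-cohomology does \emph{not} imply the strong $n$-Massey vanishing property in general, and Proposition~\ref{prop:unipotent representation} is merely Dwyer's translation, not a vanishing criterion. For pro-$p$ RAAGs associated to undigraphs the paper invokes \cite[Thm.~1.1]{BCQ}, whose proof already constructs the required unitriangular representation explicitly; you cannot replace that input by a soft appeal to quadraticity.

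Second, and more seriously, your local step at a special vertex $w$ does not go through as written. You propose to find a single $A_w=B$ with $[B,\rho(u)]=\rho(u)^q$ simultaneously for \emph{all} $u\in\calV_\Delta$, appealing to Lemma~\ref{lem:comm qpower}. But that lemma produces such a $B$ for a \emph{single} matrix $A$; it gives no reason why one $B$ should work for several commuting matrices at once (simultaneous block-diagonalizability is not enough, since the block structures of the various $\rho(u)$ need not coincide). The paper's key idea, which your outline misses, is this: because $\bfH^\bullet(G_w)\simeq\bfLam_\bullet(\rmH^1(G_w))$ is the \emph{full} exterior algebra on $\rmH^1(G_w)$, the condition $(\alpha_i|_{G_w})\smallsmile(\alpha_{i+1}|_{G_w})=0$ forces $\alpha_i|_{G_w}$ and $\alpha_{i+1}|_{G_w}$ to be linearly dependent, whence all the $\alpha_i|_{G_w}$ are scalar multiples $a_i\bar\alpha$ of a single $\bar\alpha\in\rmH^1(G_w)$. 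One then \emph{modifies} the images of the ordinary vertices $u'\in\calV_\Delta$, replacing $A(u')$ by the power $A(u)^{\bar\alpha(u')/\bar\alpha(u)}$ of a fixed $A(u)$; now a single $B$ from Lemma~\ref{lem:comm qpower} applied to $A(u)$ works for all of them at once, and one sets $\rho(w)=B\cdot A(u)^{\bar\alpha(w)/\bar\alpha(u)}$ (this also produces the correct first upper diagonal $\alpha_i(w)$ for $\rho(w)$, about which your discussion was uncertain). Compatibility of the modified $A'(u')$ with ordinary vertices outside $\mathrm{St}(w)$ is then checked directly from the identity~\eqref{eq:ab-ba}, and distinct special vertices impose no mutual constraints since they are disjoint; no inductive amalgam gluing is needed, and the ``delicate bookkeeping step'' you flag as the main obstacle simply does not arise.
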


\begin{proof}
If $\Gamma$ has no special vertices, then $\Gamma$ is an undigraph, and the associated pro-$p$ RAAG satisfies the  $n$-Massey vanishing property for every $n\geq3$ by \cite[Thm.~1.1]{BCQ}.
So, we suppose that $\Gamma$ has at leas a special vertex.

 Let $\alpha_1,\ldots,\alpha_n$ be a sequence of elements of $\rmH^1(G,\F_p)$ satisfying \eqref{eq:cup triv}.
 For every vertex $v\in\calV$, set
 \[\begin{split}
  A(v)&=I_{n+1}+\alpha_1(v)E_{1,2}+\alpha_2(v)E_{2,3}+\ldots+\alpha_n(v)E_{n,n+1}\\
    &=\left(\begin{array}{ccccc}  1&\alpha_1(v)&0&& 0\\ &1&\alpha_2(v)&\ddots & \\ &&\ddots&&0 \\ &&&1&\alpha_n(v) \\ &&&&1 \end{array}\right)\in\dbU_{n+1} .  \end{split}\]
By the proof of \cite[Thm.~1.1]{BCQ}, one has
\begin{equation}\label{eq:ab-ba}
  \alpha_i(v)\alpha_{i+1}(v')-\alpha_i(v')\alpha_{i+1}(v)=0\qquad\text{for every }i=1,\ldots,n-1,
\end{equation}
whenever $v$ and $v'$ are adjacent ordinary vertices, which implies that
\begin{equation}\label{eq:rel1 specialclique}
[A(v),A(v')]=I_{n+1}\qquad\text{for every }v,v'\in\calV_0
\end{equation}
 (cf. \cite[p.~13]{BCQ}).

Now let $w\in\calV_s$ be a special vertex of $\Gamma$, and put $\mathrm{St}(w)=(\mathcal{W},\calE_w)$.
Since $\Gamma$ is special-clique, $\mathrm{St}(w)$ is a clique of $\Gamma$, and by Proposition~\ref{prop:special clique inclusion} the associated oriented pro-$p$ RAAG $G_w$ is a subgroup of $G$ via the inclusion $\mathcal{W}\hookrightarrow\calV$
Moreover, the $\F_p$-cohomology algebra of $G_w$ is the exterior Stanley-Reisner $\F_p$-algebra
$$\bfLam_\bullet(\mathrm{St}(w))=\bfLam_\bullet(\F_p\mathcal{W}^\ast)=\bfLam_\bullet(\rmH^1(G_w))$$
(for example, by Theorem~\ref{thm:quad intro}).
For every $i=1,\ldots,n-1$, one has
\begin{equation}\label{eq:cup res in star}
 0=\res_{G,G_w}^2(\alpha_i\smallsmile\alpha_{i+1})=(\alpha_i\vert_{G_w})\smallsmile(\alpha_{i+1}\vert_{G_w}),
\end{equation}
and since $\rmH^2(G_w)=\Lambda_2(\rmH^1(G_2))$, \eqref{eq:cup res in star} implies that $\alpha_{i}\vert_{G_w}$ and $\alpha_{i+1}\vert_{G_w}$ are $\F_p$-linearly dependent.
Therefore, there exists $\bar\alpha\in\rmH^1(G_w)$ and $a_1,\ldots,a_n\in\F_p$ such that
$$\alpha_{i}\vert_{G_w}=a_i\bar\alpha\qquad \text{for all }i=1,\ldots,n.$$
Here we have two cases.

\medskip

\noindent {Case 1.}
Suppose that $\bar\alpha(u)=0$ for every ordinary vertex $u$ of $\mathrm{St}(w)$.
Then $A(u)=I_{n+1}$, and thus $[A(v),A(u)]=I_{n+1}$ for every other ordinary vertex $v$ of $\Gamma$, and
$$[A(w'),A(u)]=I_{n+1}=A(u)^q$$ for every special vertex $w'$ of $\Gamma$.
\medskip

\noindent {Case 2.}
Suppose now that $\bar\alpha(u)\neq 0$ for some ordinary vertex $u$ of $\mathrm{St}(w)$.
Then for every vertex $u'\in\mathcal{W}$ (including $u'=u,w$) one has
\begin{equation}\label{eq:alfa u alfa uprimo}
\alpha_i(u')=a_i\bar\alpha(u')=\frac{\bar\alpha(u')}{\bar\alpha(u)}\cdot a_i\bar\alpha(u)=
\frac{\bar\alpha(u')}{\bar\alpha(u)}\cdot\alpha_i(u)\qquad\text{for all }i=1,\ldots,n.
\end{equation}
For every $u'\in\mathcal{W}$, $u'\neq w$, replace the matrix $A(u')$ with $A'(u')=A(u)^{\bar\alpha(u')/\bar\alpha(u)}$.
Observe that for every $i=1,\ldots,n$ the $(i,i+1)$-entry of the latter matrix is precisely $\frac{\bar\alpha(u')}{\bar\alpha(u)}\bar\alpha(u)$, which is equal to $\alpha_i(u')$ by \eqref{eq:alfa u alfa uprimo}.
In particular, if $\bar\alpha(u')=0$ then $A'(u')=I_{n+1}=A(u')$, while clearly $A'(u)=A(u)$.

If $v\in\calV_0$ is an ordinary vertex of $\Gamma$ adjacent to an ordinary vertex $u'$ of $\mathrm{St}(w)$, then by \eqref{eq:ab-ba} and by \eqref{eq:alfa u alfa uprimo} for every $i=1,\ldots,n$ one has
\[ \begin{split}
  0&= \alpha_i(v)\alpha_{i+1}(u')-\alpha_i(u')\alpha_{i+1}(v)\\
   &= \alpha_i(v)\frac{\bar\alpha(u')}{\bar\alpha(u)}\alpha_{i+1}(u)-\frac{\bar\alpha(u')}{\bar\alpha(u)}\alpha_i(u)\alpha_{i+1}(v)\\
   &= \frac{\bar\alpha(u')}{\bar\alpha(u)}\cdot\left(\alpha_i(v)\alpha_{i+1}(u)-\alpha_i(u)\alpha_{i+1}(v)\right)
 \end{split}\]
Hence, if $\bar\alpha(u')\neq0$ then $\alpha_i(v)\alpha_{i+1}(u')=\alpha_i(u')\alpha_{i+1}(v)$, and \eqref{eq:rel1 specialclique} implies the equality $[A(v),A(u)]=I_{n+1}$, and thus also
$$[A(v),A'(u')]=\left[A(v),A(u)^{\bar\alpha(u')/\bar\alpha(u)}\right]=I_{n+1}.$$
On the other hand, if $\bar\alpha(u')=0$, then $A'(u')=I_{n+1}$, and $[A(v),A'(u')]=I_{n+1}$ trivially.

Finally, let $B\in\dbU_{n+1}$ be a matrix whose entries in the 1st upper-diagonal are 0 and such that $[B,A(u)]=A(u)^q$ --- cf. Lemma~\ref{lem:comm qpower} ---, and set $A'(w)=BA(u)^{\bar\alpha(w)/\bar\alpha(u)}$.
Then for every $i=1,\ldots,n$ the $(i,i+1)$-th entry of $A'(w)$ is $\frac{\bar\alpha(w)}{\bar\alpha(u)}\bar\alpha(u)$, which is equal to $\alpha_i(w)$; moreover, for every $u'\in\mathcal{W}$, $u'\neq w$ (but possibly $u'=u$) one computes, applying \eqref{eq:comm times}--\eqref{eq:comm power},
\[
\begin{split}
 [A'(w),A'(u')] &= \left[BA(u)^{\bar\alpha(w)/\bar\alpha(u)},A(u)^{\bar\alpha(u')/\bar\alpha(u)}\right]\\
 & =  \left[B,A(u)^{\bar\alpha(u')/\bar\alpha(u)}\right] \\
 & = [B,A(u)]^{\bar\alpha(u')/\bar\alpha(u)}=\left(A(u)^{\bar\alpha(u')/\bar\alpha(u)}\right)^q=A(u')^q.
\end{split}\]

\medskip

\noindent
Altogether, the assignment $v\mapsto A(v)$ or $v\mapsto A'(v)$ induces a homomorphism $\rho\colon G\to \dbU_{n+1}$ satisfying $\rho_{i,i+1}=\alpha_i$ for all $i=1,\ldots,n$, and thus the $n$-fold Massey product $\langle\alpha_1,\ldots,\alpha_n\rangle$ vanishes by Proposition~\ref{prop:unipotent representation}--(ii).
\end{proof}

\begin{rem}\rm
 In the case $\Gamma=(\calV,\calE)$ is an undigraph (which is trivially special-clique), from Proposition~\ref{thm:massey specialclique} one recovers \cite[Thm.~1.1]{BCQ}, which states that the pro-$p$ RAAG associated to an undigraph satisfies the strong $n$-Massey vanishing property for every $n>2$.
\end{rem}


Now we are ready to prove Theorem~\ref{thm:main intro}, by putting together Propositions~\ref{prop:nomassey nospecialstar1}--\ref{prop:nomassey nospecialstar2} and Proposition~\ref{thm:massey specialclique}.

\begin{proof}[Proof of Theorem~1.1]
In order to prove the implication (i)$\Rightarrow$(iii), suppose that $\Gamma=(\calV,\calE)$ is not a special-clique special digraph, and let $G$ be the oriented pro-$p$ RAAG associated to $\Gamma$ and to $q$.
Then there exists an induced subdigraph $\Gamma'=(\calV',\calE')$ with three vertices, which is as in \eqref{eq:nospecialclique 1} or in \eqref{eq:nospecialclique 2}, and Propositions~\ref{prop:nomassey nospecialstar1}--\ref{prop:nomassey nospecialstar2} imply that there exist essential $q$-fold Massey products in $\bfH^\bullet(G)$. This completes the proof of (i)$\Rightarrow$(iii).

The implication (iii)$\Rightarrow$(ii) is provided by Proposition~\ref{thm:massey specialclique}.

Finally, the implication (ii)$\Rightarrow$(i) follows by definition.
\end{proof}





\begin{bibdiv}
\begin{biblist}
\bib{bartholdi}{article}{
				author={Bartholdi, L.},
				author={H\"{a}rer, H.},
				author={Schick, Th.},
				title={Right angled Artin groups and partial commutation, old and new},
				journal={Enseign. Math.},
				volume={66},
				date={2020},
				number={1-2},
				pages={33--61},
			}

\bib{BCQ}{article}{
   author={Blumer, S.},
   author={Cassella, A.},
   author={Quadrelli, C.},
   title={Groups of $p$-absolute Galois type that are not absolute Galois
   groups},
   journal={J. Pure Appl. Algebra},
   volume={227},
   date={2023},
   number={4},
   pages={Paper No. 107262},
}

\bib{BQW}{article}{
   author={Blumer, S.},
   author={Quadrelli, C.},
   author={Weigel, Th.S.},
   title={Oriented right-angled Artin pro-$p$ groups and maximal pro-$p$ Galois groups},
   date={2023},
   journal={Int. Math. Res. Not.},
   note={Published online},
}	
   
\bib{CasQua}{article}{
   author={Cassella, A.},
   author={Quadrelli, C.},
   title={Right-angled Artin groups and enhanced Koszul properties},
   journal={J. Group Theory},
   volume={24},
   date={2021},
   number={1},
   pages={17--38},
   issn={1433-5883},
}



\bib{graph:book}{book}{
   author={Diestel, R.},
   title={Graph theory},
   series={Graduate Texts in Mathematics},
   volume={173},
   edition={5},
   publisher={Springer, Berlin},
   date={2017},
   pages={xviii+428},
}

\bib{ddsms}{book}{
   author={Dixon, J.D.},
   author={du Sautoy, M.P.F.},
   author={Mann, A.},
   author={Segal, D.},
   title={Analytic pro-$p$ groups},
   series={Cambridge Studies in Advanced Mathematics},
   volume={61},
   edition={2},
   publisher={Cambridge University Press, Cambridge},
   date={1999},
}

\bib{dwyer}{article}{
   author={Dwyer, W.G.},
   title={Homology, Massey products and maps between groups},
   journal={J. Pure Appl. Algebra},
   volume={6},
   date={1975},
   number={2},
   pages={177--190},
   issn={0022-4049},
}

\bib{ido:Massey}{article}{
   author={Efrat, I.},
   title={The Zassenhaus filtration, Massey products, and representations of
   profinite groups},
   journal={Adv. Math.},
   volume={263},
   date={2014},
   pages={389--411},
}

\bib{idomatrix}{article}{
   author={Efrat, I.},
   title={The lower $p$-central series of a free profinite group and the
   shuffle algebra},
   journal={J. Pure Appl. Algebra},
   volume={224},
   date={2020},
   number={6},
   pages={106260, 13},
   issn={0022-4049},
}

\bib{EM:Massey}{article}{
   author={Efrat, I.},
   author={Matzri, E.},
   title={Triple Massey products and absolute Galois groups},
   journal={J. Eur. Math. Soc. (JEMS)},
   volume={19},
   date={2017},
   number={12},
   pages={3629--3640},
   issn={1435-9855},
}

\bib{eq:kummer}{article}{
   author={Efrat, I.},
   author={Quadrelli, C.},
   title={The Kummerian property and maximal pro-$p$ Galois groups},
   journal={J. Algebra},
   volume={525},
   date={2019},
   pages={284--310},
   issn={0021-8693},
}

\bib{jochen}{article}{
   author={G\"{a}rtner, J.},
   title={Higher Massey products in the cohomology of mild pro-$p$-groups},
   journal={J. Algebra},
   volume={422},
   date={2015},
   pages={788--820},
   issn={0021-8693},
}

\bib{GPM}{article}{
   author={Guillot, P.},
   author={Mina\v{c}, J.},
   author={Topaz, A.},
   title={Four-fold Massey products in Galois cohomology},
   note={With an appendix by O.~Wittenberg},
   journal={Compos. Math.},
   volume={154},
   date={2018},
   number={9},
   pages={1921--1959},
}

	\bib{PJ}{article}{
   author={Guillot, P.},
   author={Mina\v{c}, J.},
   title={Extensions of unipotent groups, Massey products and Galois theory},
   journal={Adv. Math.},
   volume={354},
   date={2019},
   pages={article no. 106748},
}

\bib{HW:Massey}{article}{
   author={Harpaz, Y.},
   author={Wittenberg, O.},
   title={The Massey vanishing conjecture for number fields},
   journal={Duke Math. J.},
   volume={172},
   date={2023},
   number={1},
   pages={1--41},
}

\bib{hopwick}{article}{
   author={Hopkins, M.J.},
   author={Wickelgren, K.G.},
   title={Splitting varieties for triple Massey products},
   journal={J. Pure Appl. Algebra},
   volume={219},
   date={2015},
   number={5},
   pages={1304--1319},
}


\bib{LLSWW}{article}{
   author={Lam, Y.H.J.},
   author={Liu, Y.},
   author={Sharifi, R.T.},
   author={Wake, P.},
   author={Wang, J.},
   title={Generalized Bockstein maps and Massey products},
   date={2023},
   journal={Forum Math. Sigma},
   volume={11},
   date={2023},
   pages={Paper No. e5},
}

	\bib{lorensen}{article}{
   author={Lorensen, K.},
   title={Groups with the same cohomology as their pro-$p$ completions},
   journal={J. Pure Appl. Algebra},
   volume={214},
   date={2010},
   number={1},
   pages={6--14},
}


\bib{eli:Massey}{unpublished}{
   author={Matzri, E.},
   title={Triple Massey products in Galois cohomology},
   date={2014},
   note={Preprint, available at {\tt arXiv:1411.4146}},
}

\bib{eli2}{article}{
   author={Matzri, E.},
   title={Triple Massey products of weight $(1,n,1)$ in Galois cohomology},
   journal={J. Algebra},
   volume={499},
   date={2018},
   pages={272--280},
}
\bib{eli3}{article}{
   author={Matzri, E.},
   title={Higher triple Massey products and symbols},
   journal={J. Algebra},
   volume={527},
   date={2019},
   pages={136--146},
   issn={0021-8693},}

\bib{MerSca1}{unpublished}{
   author={Merkurjev, A.},
   author={Scavia, F.},
   title={Degenerate fourfold Massey products over arbitrary fields},
   date={2022},
   note={Preprint, available at {\tt arXiv:2208.13011}},
}

\bib{MerSca3}{unpublished}{
   author={Merkurjev, A.},
   author={Scavia, F.},
   title={On the Massey Vanishing Conjecture and Formal Hilbert 90},
   date={2023},
   note={Preprint, available at {\tt arXiv:2308.13682}},
}

   \bib{MT:kernel}{article}{
   author={Mina\v{c}, J.},
   author={T\^{a}n, N.D.},
   title={The kernel unipotent conjecture and the vanishing of Massey
   products for odd rigid fields},
   journal={Adv. Math.},
   volume={273},
   date={2015},
   pages={242--270},
}

\bib{MT:Masseyall}{article}{
   author={Mina\v{c}, J.},
   author={T\^{a}n, N.D.},
   title={Triple Massey products vanish over all fields},
   journal={J. London Math. Soc.},
   volume={94},
   date={2016},
   pages={909--932},
}

\bib{mt:Massey}{article}{
   author={Mina\v{c}, J.},
   author={T\^{a}n, N.D.},
   title={Triple Massey products and Galois theory},
   journal={J. Eur. Math. Soc. (JEMS)},
   volume={19},
   date={2017},
   number={1},
   pages={255--284},
   issn={1435-9855},
}

\bib{morishita}{article}{
   author={Morishita, M.},
   title={Milnor's link invariants attached to certain Galois groups over
   ${\bf Q}$},
   journal={Proc. Japan Acad. Ser. A Math. Sci.},
   volume={76},
   date={2000},
   number={2},
   pages={18--21},
}

\bib{nsw:cohn}{book}{
   author={Neukirch, J.},
   author={Schmidt, A.},
   author={Wingberg, K.},
   title={Cohomology of number fields},
   series={Grundlehren der Mathematischen Wissenschaften},
   volume={323},
   edition={2},
   publisher={Springer-Verlag, Berlin},
   date={2008},
   pages={xvi+825},
   isbn={978-3-540-37888-4},}

\bib{pal:Massey}{unpublished}{
   author={P\'al, A.},
   author={Szab\'o, E.},
   title={The strong Massey vanishing conjecture for fields with virtual cohomological dimension at most 1},
   date={2020},
   note={Preprint, available at {\tt arXiv:1811.06192}},
}

\bib{papa:raags}{article}{
   author={Papadima, S.},
   author={Suciu, A.I.},
   title={Algebraic invariants for right-angled Artin groups},
   journal={Math. Ann.},
   volume={334},
   date={2006},
   number={3},
   pages={533--555},
}

\bib{cq:massey}{article}{
   author={Quadrelli, C.},
   title={Massey products in Galois cohomology and the elementary type conjecture},
   date={2024},
   journal={J. Number Theory},
   volume={258},
   pages={40--65} 
}

\bib{cq:pyt}{article}{
   author={Quadrelli, C.},
   title={Massey products in Galois cohomology and Pythagorean fields},
   date={2024},
   journal={Comm. Algebra},
   note={To appear, available at \tt{arXiv:2312.07967}},
}

			\bib{QSV}{article}{
				author={Quadrelli, C.},
				author={Snopce, I.},
				author={Vannacci, M.},
				title={On pro-$p$ groups with quadratic cohomology},
				date={2022},
				journal={J. Algebra},
				volume={612},
				pages={636--690},
			}  

\bib{ribzal}{book}{
   author={Ribes, L.},
   author={Zalesski\u{\i}, P.A.},
   title={Profinite groups},
   series={Ergebnisse der Mathematik und ihrer Grenzgebiete. 3. Folge. A
   Series of Modern Surveys in Mathematics},
   volume={40},
   edition={2},
   publisher={Springer-Verlag, Berlin},
   date={2010},
   pages={xvi+464},
   isbn={978-3-642-01641-7},
}

  \bib{serre:galc}{book}{
   author={Serre, J-P.},
   title={Galois cohomology},
   series={Springer Monographs in Mathematics},
   edition={Corrected reprint of the 1997 English edition},
   note={Translated from the French by Patrick Ion and revised by the
   author},
   publisher={Springer-Verlag, Berlin},
   date={2002},
   pages={x+210},
   isbn={3-540-42192-0},}

	\bib{serre:trees}{book}{
   author={Serre, J.-P.},
   title={Trees},
   series={Springer Monographs in Mathematics},
   note={Translated from the French original by John Stillwell;
   Corrected 2nd printing of the 1980 English translation},
   publisher={Springer-Verlag, Berlin},
   date={2003},
   pages={x+142},
   isbn={3-540-44237-5},
}

\bib{SZ}{article}{
   author={Snopce, I.},
   author={Zalesskii, P.},
   title={Right-angled Artin pro-$p$ groups},
   journal={Bull. Lond. Math. Soc.},
   volume={54},
   date={2022},
   number={5},
   pages={1904--1922},
}

   \bib{vogel}{report}{
   author={Vogel, D.},
   title={Massey products in the Galois cohomology of number fields},
   date={2004},
   note={PhD thesis, University of Heidelberg},
   eprint={http://www.ub.uni-heidelberg.de/archiv/4418},
}

\bib{wang:Massey}{report}{
   author={Wang, H.},
   title={Massey product and its application},
   date={2012},
   note={Preprint},
   eprint={http://hewang.sites.northeastern.edu/research},
}

\bib{wick}{article}{
   author={Wickelgren, K.},
   title={Massey products $\langle y$, $x$, $x$, $\ldots$, $x$, $x$,
   $y\rangle$ in Galois cohomology via rational points},
   journal={J. Pure Appl. Algebra},
   volume={221},
   date={2017},
   number={7},
   pages={1845--1866},
   issn={0022-4049},
}

\end{biblist}
\end{bibdiv}
\end{document}